\date{}
\newlength{\defbaselineskip}
\long\def\salta#1{\relax}
\numberwithin{equation}{section}
\theoremstyle{plain}
\newtheorem{theorem}{Theorem}[section]
\newtheorem{proposition}[theorem]{Proposition}
\newtheorem{lemma}[theorem]{Lemma}
\theoremstyle{definition}
\newtheorem{definition}[theorem]{Definition}
\newtheorem{remark}[theorem]{Remark}
\theoremstyle{remark}
\renewcommand{\theequation}{\thesection.\arabic{equation}}
\def\eps{\varepsilon}
\def\dys{\displaystyle}
\def\oeps{\Omega^\eps}
\def\t1p0{T^{1,p}_{0}(\Omega)}
\def\m2{M^{\frac{N(p-1)}{N-1}}(\Omega)}
\def\into{\int_{\Omega}}
\def\w-1p'{W^{-1,p'}(\Omega)}
\def\pw-1p'u{L^{p'}(0,1;W^{-1,p'}(\Omega))}
\def\dys{\displaystyle}
\def\lp'n{(L^{p'}(\Omega))^{N}}
\def\oeps{\Omega^\eps}
\numberwithin{equation}{section}
\title[A semilinear elliptic problem with a strong singularity at $u=0$]{Definition, existence, stability\\ and uniqueness of the solution\\ to a semilinear elliptic problem\\ with a strong singularity at $ u = 0 $}
\author[D. Giachetti]{Daniela Giachetti}
\address{Daniela Giachetti \newline Dipartimento di Scienze di Base e Applicate per l'Ingegneria \newline Facolt\`a di ingegneria civile e industriale\newline Sapienza Universit\`a di Roma  \newline Via Scarpa 16, 00161 Roma, Italy}
\email{{\tt daniela.giachetti@sbai.uniroma1.it}}
\author[P.J. Mart\'inez-Aparicio]{Pedro J. Mart\'inez-Aparicio}
\address{Pedro J. Mart\'inez-Aparicio \newline Departamento de Matem{\'a}tica Aplicada y Estad\'istica \newline
Universidad Polit{\'e}cnica de Cartagena \newline Paseo Alfonso XIII 52, 30202 Cartagena (Murcia), Spain}
\email{{\tt pedroj.martinez@upct.es}}
\author[F. Murat]{Fran\c cois Murat}
\address{Fran\c cois Murat \newline Laboratoire Jacques-Louis Lions et CNRS \newline Universit\'e Pierre et Marie Curie \newline Bo\^ite Courrier 187, 75252 Paris Cedex 05, France}
\email{{\tt murat@ann.jussieu.fr}}
\keywords{
Semilinear equations at $u=0$, singularity, existence, stability, uniqueness.\\
\indent 2010 {\it Mathematics Subject Classification.} 35J25, 35J67, 35J75}
\date{}
\begin{document}
\maketitle








\centerline{version April 09, 2017}

\centerline{Accepted for publication in Ann. Sc. Norm. Sup. Pisa}

\begin{abstract}
\label{abs}
In this paper we consider a semilinear elliptic equation with a strong singularity at $u=0$, namely
\begin{equation*}
\begin{cases}
\dys u\geq 0 & \mbox{in }  \Omega,\\
\displaystyle - div \,A(x) D u  = F(x,u)& \mbox{in} \; \Omega,\\
u = 0 & \mbox{on} \; \partial \Omega,\\
\end{cases} 
\end{equation*}
with $F(x,s)$ a Carath\'eodory function such that 
$$
0\leq F(x,s)\leq \frac{h(x)}{\Gamma(s)}\,\,\mbox{ a.e. } x\in\Omega,\, \forall s>0,
$$
with $h$ in some $L^r(\Omega)$ and $\Gamma$ a $C^1([0,+\infty[)$ increasing function such that $\Gamma(0)=0$ and $\Gamma'(s)>0$ for every $s>0$.

 We introduce a notion of solution to this problem in the spirit of the solutions defined by transposition. This definition  allows us to prove the existence and the stability of this solution, as well as its uniqueness when $F(x,s)$ is nonincreasing in $s$.

\end{abstract}

\noindent {\bf Contents}\label{contents}

\medskip

\noindent Abstract \dotfill \pageref{abs}

\noindent 1 Introduction \dotfill  \pageref{Intro}

\medskip

\noindent 2 Assumptions  \dotfill \  \pageref{assumptions}

2.1 Notation  \dotfill \pageref{notation}

\medskip

\noindent 3 Definition of a solution to problem \eqref{eqprima}   \dotfill \pageref{secdefi}

3.1 The space $\mathcal V(\Omega)$ of test functions   \dotfill 
\pageref{secdefiV}

3.2 Definition of a solution to problem \eqref{eqprima}
\dotfill  \pageref{sub32}

\medskip

\noindent 4 Statements of the existence, stability and uniqueness results \dotfill  \pageref{stat}

\medskip

\noindent 5 A priori estimates \dotfill \pageref{estimates}

5.1 A priori estimate of $G_k(u)$ in $H_0^1(\Omega)$ \dotfill  \pageref{sub51}

5.2 A priori estimate of $\varphi DT_k(u)$ in $(L^2(\Omega))^N$ for $\varphi \in H_0^1(\Omega)\cap L^\infty(\Omega)$  \dotfill  \pageref{sub52}

5.3 Control of the integral $\dys\int_{\{u\leq \delta\}}F(x,u)v$  \dotfill  \pageref{sub53}

5.4 A priori estimate of $\beta(u)$ in $H_0^1(\Omega)$ \dotfill  \pageref{sub54}

\medskip

\noindent 6 Proofs of the Stability Theorem~\ref{est} and of the Existence Theorem~ \ref{EUS} \dotfill  \pageref{proofexistence}

6.1 Proof of the Stability Theorem~\ref{est} \dotfill  \pageref{proofstability}

6.2 Proof of  the Existence Theorem~\ref{EUS} \dotfill  \pageref{profiste}

\medskip

\noindent 7  Comparison Principle and proof of the Uniqueness Theorem~\ref{uniqueness} $\;$ \dotfill  \pageref{comparison}

\medskip

\noindent Appendix~\ref{appendixa} An useful lemma \dotfill  \pageref{appendixa}

\medskip
 
\noindent Acknowledgments   \dotfill  \pageref{acknowledgments}

\noindent References   \dotfill  \pageref{references}

\vskip1cm

\section{Introduction}
\label{Intro}

\noindent {\bf  Position of the problem}
 
In the present paper we deal with a semilinear problem with a strong singularity at $u=0$, which consists in finding a function $u$ which satisfies 
\begin{equation}
\label{eqprimai}
\begin{cases}
\dys u\geq 0 & \mbox{in }  \Omega,\\
\displaystyle - div\, A(x) D u  = F(x,u) & \mbox{in} \; \Omega,\\
u = 0 & \mbox{on} \; \partial \Omega,\\
\end{cases} 
\end{equation}
where $\Omega$ is a bounded open set of $\mathbb{R}^N$,  $N\geq 1$, where $A$ is a coercive matrix with coefficients in $L^\infty(\Omega)$, and where 
$$
F:(x,s)\in\Omega\times [0,+\infty[\to F(x,s)\in[0,+\infty]
$$ 
is a Carath\'eodory function which satisfies 
\begin{equation}
\label{condi}
\displaystyle 0 \leq F(x,s)\leq \frac{h(x)}{\Gamma(s)}\, \mbox{ a.e. } x\in\Omega,\, \forall s>0,
\end{equation}
with
\begin{align}
\begin{cases}
\label{gami}
h\geq 0,\\
h\in L^r(\Omega),\, r=\frac{2N}{N+2}\,\,\,\text {if}\,\, N\geq 3,\,\, r>1 \,\,\text{if} \,\, N=2,\,\,r=1\,\, \text {if} \,\,N=1,\\
\Gamma:s\in[0,+\infty[ \longrightarrow \Gamma(s)\in[0,+\infty[ \mbox{ is a } C^1([0,+\infty[) \mbox{ function}\\
\mbox{such that } \Gamma(0)=0 \mbox{ and } \Gamma'(s)>0 \,\, \forall s>0.
\end{cases}
\end{align}

\vspace{0.5cm}
\noindent {\bf The function $F$}


A model for the function $F(x,s)$ (see a more general model in \eqref{21bis} and Remark~\ref{primerarem} {\it viii}) below)  is given by 
\begin{align}
\begin{cases}
\label{modeli}
\dys F(x,s)= f(x) \frac{\left(a+ \sin (\frac 1s) \right)}{\exp (- \frac {1}{s})} + g(x) \dys\frac{\left(b+ \sin (\frac 1s) \right)}{s^\gamma} + l(x)\\
 \dys \mbox{a.e. } x\in\Omega,\, \forall s>0,
\end{cases}
\end{align}
where $\gamma>0,$ $a>1,$ $b>1$ and where the functions $f$, $g$ and $l$ are nonnegative. In this model 
$\dys\frac{\left(a+ \sin (\frac 1s) \right)}{\exp (- \frac {1}{s})}$, 
as well as 
$\dys\frac{\left(b+ \sin (\frac 1s) \right)}{s^\gamma}$,
are just examples of functions which can be replaced by any singularity $\dys\frac{1}{\Gamma(s)}$ with $\Gamma$ satisfying \eqref{gami}.  Note that the behaviour of $F(x,s)$ for $s=0$ can be very different according to the point $x$.

Note also that the function $F(x,s)$ is defined only for $s\geq0$, and that in view of \eqref{condi} and \eqref{gami}, $F(x,s)$ is finite for almost everywhere $x\in\Omega$ and for every $s>0$, but that $F(x,s)$ can exhibit a singularity when $s>0$ tends to $0$ and when $h(x)>0$. Let us emphasize the fact that the Carath\'eodory character of the function $F(x,s)$, which can take infinite values when $s=0$, means in particular that for almost every $x\in\Omega$, the function $s\to F(x,s)$ is continuous not only for every $s>0$ but also for $s=0$.

Note finally that we do not require $F(x,s)$ to be nonincreasing in $s$, except when we deal with uniqueness and comparison results  in the Uniqueness Theorem~\ref{uniqueness} and in Section~\ref{comparison} below. 

\vspace{0.5cm}
\noindent {\bf The case of a mild singularity}

In the present paper we consider the case of strong singularities, namely the case where  $F(x,s)$ can have any (wild) behavior as $s$ tends to zero, while in our previous paper \cite{GMM1} we restricted ourselves to the case of mild singularities, namely the case where 
\begin{align}
\begin{cases}
\label{15bis}
\dys0\leq F(x,s)\leq h(x)\left(\frac{1}{s^\gamma}+1\right)\, \mbox{ a.e. } x\in\Omega,\, \forall s>0\\\mbox{with } 0<\gamma\leq 1.
\end{cases}
\end{align}

\addtocounter{equation}{1}
When the function $F$ satisfies \eqref{15bis}, our definition of the solution to problem \eqref{eqprimai}  is relatively classical, because it consists in looking for a function $u$ such that
\begin{align}
\tag{{\theequation}$_{\,\,\mbox{\tiny{mild}}}$}
\label{gamin}
\begin{cases}
u\in H_0^1(\Omega),\\
u\geq 0 \mbox{ a.e. in } \Omega,\\
\dys  \!\into F(x,u)\varphi<+\infty\,\, \forall \varphi\in H_0^1(\Omega),\, \varphi\geq 0,\vspace{0.1cm}
\\
\dys \,\into A Du D\varphi=\into F(x,u)\varphi \,\, \forall \varphi\in H_0^1(\Omega),
\end{cases}
\end{align}
(see \cite[Section~3]{GMM1}). 

\vspace{0.5cm}
\noindent {\bf Definition of the solution for a strong singularity}

\noindent In contrast, the definition of the solution to problem \eqref{eqprimai} that we use in the present paper is less classical. 
This definition is in our opinion one of the main originalities of the present paper. 

We refer the reader to Section~\ref{secdefi} below where this definition is given in details, but here we emphasize some of its main features.  

When the function $F$ satisfies \eqref{condi} and \eqref{gami}, we will say (see Definition~\ref{sol} below) that $u$ is a solution to problem \eqref{eqprimai} if 
\addtocounter{equation}{1}
\begin{equation}
\tag{{\theequation}$_{\,\,\mbox{\tiny{strong}}}$}
\label{sol1i}
\begin{cases}
i)\, u\in L^2(\Omega)\cap H^1_{\mbox{{\tiny loc}}}(\Omega),\\
ii)\, u(x)\geq 0 \,\,\mbox{ a.e. } x\in\Omega,\\
iii)\, G_k(u)\in H_0^1(\Omega)\,\, \,\, \forall k>0,\\
iv)\, \varphi T_k(u)\in H^1_0(\Omega)\,\, \,\,\forall k>0,\,\, \forall \varphi \in H^1_0(\Omega)\cap L^\infty (\Omega),
\end{cases}
\end{equation}
where $G_k(s)=(s-k)^+$ and $T_k(s)=\inf(s,k)=s-G_k(s)$ for $s>0$, and if

\addtocounter{equation}{1}
\begin{align}
\tag{{\theequation}$_{\,\,\mbox{\tiny{strong}}}$}
\label{sol2i}
\begin{cases}
\displaystyle\forall v \in H_0^1(\Omega)\cap L^\infty(\Omega),\,v\geq 0,\\ \dys\mbox{with } -div \, {}^t\!A(x)Dv=\sum_{i \in I} \hat{ \varphi}_i (-div\, \hat{ g}_i)+\hat{ f} \mbox{ in } \mathcal{D}'(\Omega),\\
\mbox{where } I \mbox{ is finite, } \hat{\varphi_i}\in H_0^1(\Omega)\cap L^\infty(\Omega), \hat{g_i}\in (L^2(\Omega))^N, \hat{f_i}\in L^1(\Omega),\\
\mbox{one has}\\\vspace{0.1cm}
i) \,\dys  \into F(x,u) v<+\infty,\\
ii) \dys \into\, {}^t\!A(x)Dv DG_k(u)+  \displaystyle\sum_{i\in I} \into \hat{g_i} D(\hat{\varphi_i} T_k(u))+\into \hat{f} T_k(u)=\\=\langle -div\, {}^t\!A(x)Dv, G_k(u) \rangle_{H^{-1}(\Omega),H_0^1(\Omega)}+\langle\langle -div\, {}^t\!A(x)Dv, T_k(u)\rangle\rangle_{\Omega}=\\
\displaystyle =\into F(x,u) v\,\,\,\forall k>0,
\end{cases}
\end{align}
where $\langle\langle \mbox{ }, \rangle\rangle_\Omega$ is a notation introduced in \eqref{dc} below. 

 Note that in this definition every term of (\ref{sol2i} {\it ii}) makes sense in view of (\ref{sol1i} {\it iii}) and (\ref{sol1i} {\it iv}).

\vspace{0.5cm}
This definition \eqref{sol1i}, \eqref{sol2i} strongly differs from the definition \eqref{gamin}. Indeed for example the assertion $u\in H_0^1(\Omega)$ of \eqref{gamin} is replaced in \eqref{sol1i} by  $u\in H^1_{\mbox{\tiny loc}}(\Omega)$, $G_k(u)\in H_0^1(\Omega)$ for every $k>0$ and $\varphi T_k(u)\in H_0^1(\Omega)$ for every $k>0$ and for every $\varphi \in H^1_0(\Omega)\cap L^\infty (\Omega)$. Here the assertion $G_k(u)\in H_0^1(\Omega)$ for every $k>0$ in particular expresses the boundary condition $u=0$ on $\partial\Omega$ (see Remark~\ref{rem25} {\it ii})) below.

Actually, the solution $u$ does not in general belong to $H_0^1(\Omega)$, or in other terms $u$ ``does not belong to $H^1(\Omega)$ up to the boundary", when the function $F$ exhibits a strong singularity at $s=0$. It is indeed proved by A.C. Lazer and P.J. McKenna in \cite[Theorem~2]{LM} that when $f\in C^\alpha(\overline{\Omega})$, $\alpha>0$, with 
$
f(x)\geq f_0>0 \mbox{ in } \Omega,
$
in a domain $\Omega$ with $C^{2+\alpha}$ boundary, the solution $u$ to the equation

\begin{equation*}
\begin{cases}
\dys -\Delta u =\frac{f(x)}{u^\gamma}  & \mbox{in }  \Omega,\\
u = 0 & \mbox{on} \; \partial \Omega,\\
\end{cases} 
\end{equation*}
does not belong to $H^1(\Omega)$ when $\gamma>3$, even if $u$ belongs to $C^{2+\alpha}(\Omega)\cap C^0(\overline{\Omega})$ in this case.

The ``space" defined by \eqref{sol1i} in which we look for a solution to \eqref{eqprimai} in the case of a strong singularity is therefore fairly different from the space $H_0^1(\Omega)$ in which we look for a solution to \eqref{eqprimai} in the case of a mild singularity.

\vspace{0.5cm}

Another important difference between the two definitions appears as far as the partial differential equation in \eqref{eqprimai} is concerned. Indeed, if the formulation in the last line of \eqref{gamin} is classical, with the use of test functions in $H_0^1(\Omega)$, the equation in \eqref{eqprimai} as formulated in \eqref{sol2i} involves test functions which belong to the vectorial space described in the three first lines of \eqref{sol2i}, that we will denote by $\mathcal{V}(\Omega)$ in the rest of the present paper.  Actually this space $\mathcal{V}(\Omega)$ consists in functions $v$ such that $ -div\, {}^t\!A(x)Dv$ can be put in the usual  duality between $H^{-1}(\Omega)$ and $H^1_0(\Omega)$ with $G_k(u)$, and in a ``formal duality" (through the notation $\langle\langle\mbox{ , } \rangle\rangle_{\Omega}$ introduced in \eqref{dc} below) with $T_k(u)$: indeed, when $u$ satisfies  \eqref{sol1i}, one writes $u$ as the sum $u=G_k(u)+T_k(u)$, where  $G_k(u)\in H^1_0(\Omega)$ and where $\varphi T_k(u)\in H_0^1(\Omega)$ for every $k>0$ and for every  $\varphi \in H_0^1(\Omega)\cap L^\infty(\Omega)$; on the other hand $ -div\, {}^t\!A(x)Dv$, which of course belongs to $H^{-1}(\Omega)$ when $v\in H_0^1(\Omega)$, will be assumed to be the sum of a function $\hat{f} \in L^1(\Omega)$ and of a finite sum of functions $\hat{\varphi_i}(-div\, \hat{g_i})$, with $\hat{\varphi_i}\in H_0^1(\Omega)\cap L^\infty(\Omega)$  and $ -div\, \hat{g_i}\in H^{-1}(\Omega) $, a fact which allows one to correctly define the ``formal duality"   $\langle\langle-div\, {}^t\!A(x)Dv,T_k(u) \rangle\rangle_{\Omega}$ (see \eqref{dc} in Definition~\ref{def32} below). 

This is a definition of the solution by transposition in the spirit of those introduced by J.-L. Lions and E. Magenes and by G. Stampacchia. Here again things are fairly different with respect to the case of a mild singularity. 

\vspace{0.5cm}
\noindent {\bf Main results of the present paper}

In the framework of the definition \eqref{sol1i}, \eqref{sol2i} (i.e. of Definition~\ref{sol} below) we are able to prove that there exists at least a solution to problem \eqref{eqprimai} (see the Existence Theorem~\ref{EUS} below).  We also prove that this solution is stable with respect to variations of the right-hand side (see the Stability Theorem~\ref{est} below). Finally, if further to \eqref{condi} and \eqref{gami} we assume that the function $F(x,s)$ is  nonincreasing with respect to $s$, then this solution is unique (see the Uniqueness Theorem~\ref{uniqueness} below). 

In brief Definition~\ref{sol} below provides a framework where problem \eqref{eqprimai} is well posed in the sense of Hadamard.

\vspace{0.5cm}

Moreover every solution $u$ to problem \eqref{eqprimai} defined in this sense satisfies the following a priori estimates: 

\begin{itemize}[leftmargin=*]
\item an a priori estimate of $G_k(u)$ in $H_0^1(\Omega)$ for every $k>0$ (see Proposition~\ref{prop1} below) which is formally obtained by using $G_k(u)$ as test function in \eqref{eqprimai}; this implies an a priori estimate of $u$ in $L^2(\Omega)$ (see Remark~\ref{rem52u} below);

\item an a priori estimate of $\varphi DT_k(u)$ in $(L^2(\Omega))^N$ for every $\varphi \in H^1_0(\Omega)\cap L^\infty (\Omega)$ (see Proposition~\ref{prop2} below) which is formally obtained by using $\varphi^2 T_k(u)$ and $\varphi^2$ as test functions in \eqref{eqprimai}; this implies an a priori estimate of $u$ in $H^1_{\mbox{\tiny loc}}(\Omega)$ (see Remark~\ref{55bis} below) and an a  priori estimate of $\varphi T_k(u)$ in $H_0^1(\Omega)$ for every $k>0$ and for every $\varphi \in H^1_0(\Omega)\cap L^\infty (\Omega)$ (see 
Remark~\ref{54bis} below);

\item  an a priori estimate of the integral  $\dys \int_{\{u\leq \delta\}} F(x,u)v$ for every $v\in\mathcal{V}(\Omega)$, $v\geq 0$, and every  $\delta>0$, which depends only on $\delta$ and on $v$ (and also on $u$ through the products $\hat{\varphi}_iDu$) by a constant which tends to zero when $\delta$ tends to zero (see  Proposition~\ref{prop3} and Remark~\ref{510bis} below);

\item  an a priori estimate of $ \beta(u)$ in $H^1_0(\Omega)$ (where the function $\beta$ is defined from the function $\Gamma$ by $\dys\beta(s)=\int_0^s \sqrt{\Gamma'(t)}dt$)   (see Proposition~\ref{lem2} below) which is formally obtained by using  $\Gamma(u)$ as test function in \eqref{eqprimai}.
\end{itemize}

\noindent The (mathematically correct) proofs of all the above a priori estimates are based on Definition \eqref{sol1i}, \eqref{sol2i} and on the use of convenient test functions $v$ in \eqref{sol2i}.


\vspace{0.5cm}

\noindent {\bf Literature} 

 There is a wide literature dealing with problem \eqref{eqprimai}. We will not pretend to give here a complete list of references and we will concentrate on some papers which seem to us to be  the most significant, also refering the interested reader to the references quoted there.

The problem \eqref{eqprimai} was initially proposed in 1960 in the pioneering work \cite{FM} of W.~Fulks and J.S.~Maybe as a model for several physical situations. The problem was then studied by many authors, among which we will quote the works of  C.A.~Stuart \cite{St},  M.G.~Crandall, P.H.~Rabinowitz and L.~Tartar \cite{BCR},  J.I.~Diaz, J.M.~Morel and L.~Oswald \cite{DiMoOs},  M.M.~Coclite and G.~Palmieri \cite{CP}, A.C.~Lazer and P.J.~McKenna \cite{LM}, Y.S.~Choi and P.J.~McKenna \cite{CM},  J.~Davila and M.~Montenegro \cite{DaMo},  C.O.~Alves, F.J.S.A.~Correa and J.V.A.~Goncalves \cite{ACG}, F.~Cirstea, M.~Ghergu and V.~Radulescu \cite{CGR},   G.M.~Coclite and M.M.~Coclite \cite{CoCo}, D.~Arcoya and L.~Moreno-M\'erida \cite{AMo},  and  F.~Oliva and F.~Petitta \cite{OlPe}.

 In most of the above quoted papers the authors look for a strong solution and use sub- and super-solutions.  In particular in \cite{LM}  A.C.~Lazer and P.J.~McKenna work in $C^{2,\alpha}(\Omega)$ and $W^{2,q}(\Omega)$  and use methods of sub- and super-solutions, proving  that when $\dys F(x,s)=\frac{f(x)}{s^\gamma}$ with $\gamma>0$, $f\in C^\alpha(\overline{\Omega})$ and $f(x)\geq f_0>0$ in a $C^{2,\alpha}$ domain $\Omega$, then one has 
$
c_1\phi_1(x)\leq u(x)^{\frac{\gamma+1}{2}}\leq c_2\phi_1(x)
$
for two constants $0<c_1<c_2$, where $\phi_1$ is the first (positive) eigenfunction of $-div\, A(x)D$ in $H_0^1(\Omega)$; 
M.G.~Crandall, P.H.~Rabinowitz and L.~Tartar \cite{BCR} study the behaviours of $u(x)$ and $|Du(x)|$ at the boundary. Let us finally note that C.~Stuart \cite{S},  as well as  M.G.~Crandall, P.H.~Rabinowitz and L.~Tartar \cite{BCR}, do not assume that $F(x,s)$ is nonincreasing in $s$.

More recently L.~Boccardo and L.~Orsina studied  in \cite{BO} the problem in the framework of weak solutions in the sense of distributions. In that paper, the authors address the problem \eqref{eqprimai} with $\dys F(x,s)=\frac{f(x)}{s^\gamma}$, where $\gamma>0$ and where $f\geq 0$ belongs to $L^1(\Omega)$, or to other Lebesgue's spaces, or to the space of Radon's measures, and they prove existence and regularity as well as non existence results. In this work the strong maximum principle and the nonincreasing character of the function $\dys F(x,s)=\frac{f(x)}{s^\gamma}$ with respect to $s$ play prominent roles. The solution $u$ to problem \eqref{eqprimai} is indeed required to satisfy $u(x)\geq c(\omega,u)>0$ on every open set $\omega$ with $\overline{\omega}\subset \Omega$, and the framework in which the solution is searched for is the Sobolev's space  $H_0^1(\Omega)$ or $H^1_{\mbox{\tiny loc}}(\Omega)$. Then L.~Boccardo and J.~Casado-D\'iaz  proved in \cite{BC} the uniqueness of the solutions obtained by approximation and the stability of the solution with respect to the $G$-convergence of a sequence of matrices $A^\eps(x)$ which are equicoercive and equibounded.

For a variational approach to the problem and extensions to the case of a nonlinear principal part, see A.~Canino and M.~Degiovanni  \cite{CaDe}, L.M.~De~Cave \cite{Cave}, and A.~Canino, B.~Sciunzi and A.~Trombetta \cite{CaScTro}. 

In the very recent preprint \cite{OlPe}, F.~Oliva and F.~Petitta consider the case where $F(x,s)=f(x)g(s)$ with $f$ a nonnegative measure and $g$ a continuous function which is singular at $u=0$; they prove in particular the existence of a solution $u\in L^1(\Omega)\cap W{\mbox{{\tiny loc}}}^{1,1}(\Omega)$ which satisfies the equation in the sense of distributions, and, when $g$ is nonincreasing, the uniqueness of such a solution by using convenient solutions to the adjoint problem. \vspace{0.5cm}

\noindent {\bf Contributions of the present paper}
 
 In the present paper (as in \cite{GMM1}, where mild solutions are considered), we obtain in particular an a priori estimate of the singular term in the region where the solution is close to zero. 
This estimate, which is new, is an essential tool in our proof. 
Moreover, in the case of strong singularities, the main difficulty, with respect to the case of mild singularities, is that only local estimates in the energy space are available for the solutions (see \cite{LM}). In order to solve this difficulty, we prove that the solution satisfies $\varphi Du\in (L^2(\Omega))^N$ for every $\varphi\in H_0^1(\Omega)\cap L^\infty(\Omega)$, which is a property of the solution which was not known before. We also introduce a convenient class of test functions, namely the class $\mathcal V(\Omega)$  defined in Subsection 3.1 below. The class $\mathcal V(\Omega)$  and the method  of proof that we use seem to be rather flexible and can be adapted to other situations where other techniques 
would fail. This is the case for the existence of solutions when the equation involves a zeroth-order term which prevents the use of the strong maximum principle in the equation (see \cite{GMM2bis}), as well as for the  homogenization of singular problems in perforated domains  $\oeps$ obtained from $\Omega$ by removing many small holes, when the Dirichlet boundary condition on $\partial\oeps$ leads to the appearance of a ``strange term" $\mu u$ in $\Omega$ (see \cite{GMM3}).
 
One of the strong points of the present paper is Definition~\ref{sol}. This definition makes problem \eqref{eqprimai} well posed in the sense of Hadamard when the function $F(x,s)$ is nonincreasing in $s$, and allows us to perform in a mathematically correct way all the formal computations that we want to make on problem \eqref{eqprimai}, even when the assumption that the function $F(x,s)$ is nonincreasing in $s$ is not done.
Two other strong points are the fact that we do not assume that the function $F(x,s)$ is nonincreasing with respect to $s$, except as far as uniqueness is concerned, and that we do not use the strong maximum principle. 

A weak point of the present paper is however the fact that Definition~\ref{sol} is a definition by transposition in the spirit of J.-L.~Lions and E.~Magenes and of G.~Stampacchia, a feature which makes difficult (if not impossible, except maybe when $p=2$) to extend it to the case of general nonlinear monotone operators. 
  \smallskip

\section{Assumptions}
\label{assumptions}

As said in the Introduction we study in this paper solutions to the following singular semilinear problem
\begin{equation}
\label{eqprima}
\begin{cases}
u\geq 0 & \mbox{in } \Omega,\\
\displaystyle - div\, A(x) D u  = F(x,u) & \mbox{in} \; \Omega,\\
u = 0 & \mbox{on} \; \partial \Omega,\\
\end{cases} 
\end{equation}
where a model for the function $F(x,s)$ is given by \eqref{modeli}; another more general model is the following one
\begin{align}
\label{21bis}
\begin{cases}
\dys F(x,s)= f(x) \frac{\left(a+ \sin (S(s)) \right)}{\exp (- S(s))} + g(x) \dys\frac{\left(b+ \sin (\frac 1s) \right)}{s^\gamma} + l(x)\\
 \dys \mbox{a.e. } x\in\Omega,\, \forall s>0,
\end{cases}
\end{align}
where $\gamma>0$, $a>1$, $b>1$, 
where the function $S$ satisfies 
\begin{equation}
\label{eq.2.2bis} 
S \in C^1(]0,+\infty[),     \quad   S'(s) < 0 \,\, \forall s>0,   \quad   S(s) \to + \infty  \,\, \mbox{as }  s \to 0,
\end{equation}
and where the functions $f$, $g$ and $l$ are nonnegative and belong to $L^r(\Omega)$ with $r$ defined in (\ref{eq0.1} {\it i})
(see  Remark~\ref{primerarem} {\it viii}) below).

\bigskip
In this section, we give the precise assumptions that we make on the data of problem \eqref{eqprima}.
\bigskip

We assume that $\Omega$ is an open bounded set of $\mathbb{R}^N,\, N\geq 1$ (no regularity is assumed on the boundary $\partial\Omega$ of $\Omega$), that the matrix $A$ is bounded and coercive, i.e. satisfies
\begin{equation}\label{eq0.0}
A(x)\in (L^\infty(\Omega))^{N\times N},\,\,
\exists \alpha>0,\, \,A(x)\geq \alpha I \,\,\,\,\,{\rm a.e.}\,\,  x\in\Omega,
\end{equation}
and that the function $F$ satisfies
\begin{equation}
\label{car} 
\begin{cases}
F: (x,s)\in\Omega\times [0, +\infty[ \rightarrow F(x,s)\in [0, +\infty] \,\, \text {is a Carath\'eodory function},\\ \mbox{i.e. } F \mbox{ satisfies}\\
i)\,\forall s\in [0,+\infty[,\, x\in\Omega\to F(x,s)\in [0,+\infty] \mbox{ is measurable},\\
ii)\, \mbox{for a.e. } x\in\Omega, \, s\in [0,+\infty[\rightarrow F(x,s)\in [0,+\infty] \mbox{ is continuous},
\end{cases}
\end{equation}
\begin{equation}
\label{eq0.1}
\begin{cases}
i) \,\exists h,  h(x)\geq 0 \, \, {\rm a.e.}\,\, x \in \Omega, h \in L^r(\Omega),\\
\dys\mbox{with }\, r=\frac{2N}{N+2}\,\,\,\text {if}\,\, N\geq 3,\,\, r>1 \,\,\text{if} \,\, N=2,\,\,r=1\,\, \text {if} \,\,N=1,\\
ii)\,\exists\Gamma: s\in[0,+\infty[ \rightarrow \Gamma(s)\in [0,+\infty[,  \,\,\Gamma\in C^1([0,+\infty[),\\ 
\mbox{such that } \Gamma(0)=0 \mbox{ and } \Gamma'(s)>0\,\, \forall s>0,\\
iii) \, \displaystyle 0 \leq F(x,s)\leq  \frac{h (x)}{\Gamma(s)}\,\,  \mbox{a.e. } x \in \Omega, \forall s>0.
\end{cases}
\end{equation}




Moreover, when we will prove comparison and uniqueness results (Proposition~\ref{prop0} and Theorem~\ref{uniqueness}),
we will assume that $F(x,s)$ is  nonincreasing in $s$, i.e. that
\begin{equation}\label{eq0.2}
\dys F(x,s)\leq F(x,t)\,\,\,{\rm a.e.}\,\,\, x \in \Omega,\,\,\forall s,\forall t,\,0\leq t\leq s. 
\end{equation}



\begin{remark}
\label{primerarem}
\begin{itemize}[leftmargin=*]
\mbox{ }
\item {\it i}) If a function $\Gamma=\Gamma(s)$ satisfies (\ref{eq0.1} {\it ii}), then $\Gamma$ is (strictly) increasing and satisfies $\Gamma(s)>0$ for every $s>0$; note that the function $\Gamma$ can be either bounded or unbounded.

Observe also that if a function $F=F(x,s)$ satisfies \eqref{eq0.1} for $h(x)$ and $\Gamma(s)$, and if $\overline{\Gamma}(s)$ is a function which satisfies (\ref{eq0.1} {\it ii}) and $\overline{\Gamma}(s)\leq \Gamma(s)$, then $F(x,s)$ satisfies \eqref{eq0.1} for $h(x)$ and $\overline{\Gamma}(s)$.

\smallskip
\item {\it ii}) The function $F=F(x,s)$ is a nonnegative Carath\'eodory function with values in $[0,+\infty]$ and not only in $[0,+\infty[$. But, in view of condition
(\ref{eq0.1} {\it iii}), for almost every $x\in\Omega$, the function $F(x,s)$ can take the value $+\infty$ only when $s=0$ (or, in other terms, $F(x,s)$ is finite for almost every $x\in\Omega$ when $s>0$).
\smallskip
\item {\it iii}) Note that (\ref{eq0.1} {\it ii})  and (\ref{eq0.1} {\it iii}) do not impose any restriction on the growth of $F(x,s)$ as $s$ tends to zero. Moreover it can be proved (see \cite[Section~3, Proposition~1]{GMM2}) that \eqref{eq0.1}  is equivalent to the following assumption
\begin{equation}
\label{12ter}
\begin{cases}
\forall k>0,\,\exists h_k,\, h_k(x)\geq 0\mbox{ a.e. } x\in\Omega,\\
h_k\in L^r(\Omega) \mbox{ with $r$ as in  (\ref{eq0.1} {\it i})  such that}\\
0\leq F(x,s)\leq h_k(x) \mbox{ a.e. } x\in\Omega,\, \forall s\geq k.
\end{cases}
\end{equation}   
\smallskip
\item {\it iv}) Note that no growth condition is imposed from below on $F(x,s)$ as $s$ tends to zero. Indeed it can be proved (see \cite[Section~3, Remark~2]{GMM2}) that for every given functions $G_1(s)$ and $G_2(s)$ which satisfy \eqref{eq0.1}, with $h_1(x)=h_2(x)=1$, $G_1(s)\leq G_2(s)$ and $\dys\frac{G_2(s)}{G_1(s)}$ which tends to infinity as $s$ tends to zero, there exist a function $F(s)$ and two sequences $s_n^1$ and $s_n^2$ which tend to zero such that $F(s_n^1)=G_1(s_n^1)$ and $F(s_n^2)=G_2(s_n^2)$.

Of course the growth of $F(x,s)$ as $s$ tends to zero can (strongly) depend on the point $x\in\Omega$.

\smallskip
\item {\it v}) Note that the growth condition (\ref{eq0.1} {\it iii}) is stated for every $s>0$, while in \eqref{car}  $F$ is supposed to be a Carath\'eodory function defined for $s$ in $[0,+ \infty[$ and not only in $]0,+\infty[$. Indeed an indeterminacy $\dys\frac{0}{0}$ appears in $\dys \frac{h(x)}{\Gamma(s)}$ when $h(x)=0$ and $s=0$, while the growth and Carath\'eodory assumptions imply that 
$$
F(x,s)=0 \quad \forall s\geq 0 \quad \mbox{a.e. on } \{x\in\Omega:h(x)=0\}.
$$

In contrast, when $h$ is assumed to satisfy  $h(x)>0$ for almost every $x\in\Omega$, it is equivalent to have  (\ref{eq0.1} {\it iii}) for every $s>0$ or for every $s\geq 0$.
\smallskip

\item {\it vi})  Let us observe that the functions $F(x,s)$ given in examples \eqref{modeli} and \eqref{21bis} satisfy assumption \eqref{eq0.1}; indeed for these examples one has 
$$
0\leq F(x,s)\leq \overline{h}(x)\left( \frac{1}{\overline{\Gamma}(s)}+1\right)
$$
for some $\overline{h}(x)$ and $\overline{\Gamma}(s)$ which satisfy (\ref{eq0.1} {\it i}) and (\ref{eq0.1} {\it ii}); taking $\dys \Gamma(s)=\frac{\overline{\Gamma}(s)}{1+\overline{\Gamma}(s)}$ it is clear that $\Gamma(s)$ satisfies (\ref{eq0.1} {\it ii}) and that $F(x,s)$ satisfies \eqref{eq0.1}.

\smallskip

\item {\it vii}) The $C^1$ regularity of the function $\Gamma$ which is assumed in (\ref{eq0.1} {\it ii}) is used to define the function $\beta$ which appears in Proposition~\ref{lem2} (see \eqref{62bis}). The latest (regularity) result of $\beta(u)$ is in turn strongly used in the proofs of the Comparison Principle of Proposition~\ref{prop0} and of the Uniqueness Theorem~\ref{uniqueness}.

This $C^1$ regularity could appear as a strong restriction on the function $\Gamma$, but it can actually be proved (see \cite[Section~3]{GMM2}) that, given a function $\overline{\Gamma}$ such that for some $\delta>0$ and some $M>0$
\begin{align*}
\begin{cases}
\dys \overline{\Gamma}\in C^0([0,+\infty[),\,\, \overline{\Gamma}(0)=0,\,\, \overline{\Gamma}(s)>0 \,\,\,\, \forall s>0,\\
\dys \overline{\Gamma}(s)\geq \delta\,\,\,\, \forall s\geq M,
\end{cases}
\end{align*}
one can construct a function $\Gamma$ which satisfies (\ref{eq0.1} {\it ii}) as well as $\Gamma(s)\leq \overline{\Gamma}(s)$ for every $s>0$ and then use Remark~\ref{primerarem} {\it i}) above.

\smallskip 

\item {\it viii}) As far as example (\ref{21bis}) is concerned, 
note that when $a>1$ and $b>1$ and when the function $S$ satisfies (\ref{eq.2.2bis}), the functions 
$\dys\frac{\left(a+ \sin (S(s)) \right)}{\exp (- S(s))}$
and $\dys\frac{\left(b+ \sin (\frac 1s) \right)}{s^\gamma}$
are positive and continuous in $[0,+\infty[$
(this is no more the case when $a=1$ or $b=1$),
and therefore satisfy (\ref{car}).
Note also that these functions are easily shown to be oscillatory (in the sense that they are not nondecreasing) when $1 < a < \sqrt{2}$ and $b > 1$.
Note finally that every function $\Gamma$ which satisfies (\ref{eq0.1} {\it ii}) can be written as 
$\Gamma (s) = \exp (- S(s))$
for a function $S$ which satisfies (\ref{eq.2.2bis}).

\end{itemize}
\qed
\end{remark}

\begin{remark}
The function $h$ which appears in hypothesis (\ref{eq0.1} {\it i}) is an element of $H^{-1}(\Omega)$. Indeed, when $N\geq 3$, the exponent $ r=\frac{2N}{N+2}$ is nothing but the H\"older's conjugate $(2^*)'$ of the Sobolev's exponent $2^*$, i.e. 
\begin{equation}
\label{2333}
\mbox{when } N\geq 3,\quad\frac 1r=1-\frac{1}{2^*}, \mbox{ where } \frac{1}{2^*}=\frac 12-\frac 1N.
\end{equation}

Making an abuse of notation, we will set 
\begin{align}
\label{212bis}
\begin{cases}
2^*=\mbox{any }p \mbox{ with } 1<p<+\infty \mbox{ when } N=2,\\
2^*=+\infty \mbox{ when } N=1.
 \end{cases}
 \end{align}
With this abuse of notation, $h$ belongs to $L^r(\Omega)=L^{(2^*)'}(\Omega)\subset H^{-1}(\Omega)$ for all $N\geq 1$ since $\Omega$ is bounded.

This result is indeed a consequence of Sobolev's, Trudinger Moser's and Morrey's inequalities, which (with the above abuse of notation) assert that 
\begin{equation}
\label{2334}
\|v\|_{L^{2^*}(\Omega)}\leq C_S \|Dv\|_{(L^2(\Omega))^N}\quad \forall v\in H_0^1(\Omega) \mbox{ when } N\geq 1,
\end{equation}
where $C_S$ is a constant which depends only on $N$ when $N\geq 3$, which depends on $p$ and on $Q$ when $N=2$, and which depends on $Q$ when $N=1$, when $Q$ is any bounded open set such that $\Omega\subset Q$.
\qed
\end{remark}

\begin{remark}
Assumption \eqref{eq0.2}, namely the fact that for almost every $x\in\Omega$ the function $s\rightarrow F(x,s)$ is  nonincreasing in $s$, is not of the same nature as assumptions \eqref{car} and \eqref{eq0.1} on $F(x,s)$. 

We will only use assumption \eqref{eq0.2} when proving comparison and uniqueness results, namely Proposition~\ref{prop0} and Theorem~\ref{uniqueness}.
In contrast, all the others results of the present paper, and in particular the existence and stability results stated in Theorems~\ref{EUS} and \ref{est}, as well as the a priori estimates of Section~\ref{estimates}, do not use this assumption.
\qed
\end{remark}
\smallskip

\noindent {\bf 2.1. Notation}
\label{notation}
$\mbox{}$

We denote by $\mathcal{D}(\Omega)$ the space of the $C^\infty(\Omega)$ functions whose support is compact and included in $\Omega$, and by $\mathcal{D}'(\Omega)$ the space of distributions on $\Omega$.

We denote by $\mathcal{M}_b^+(\Omega)$ the space of nonnegative bounded Radon measures on $\Omega$.


Since $\Omega$ is bounded, $\|D w\|_{L^2(\Omega)^N}$ is a norm equivalent to $\|w\|_{H^1(\Omega)}$ on $H_0^1(\Omega)$. We set 
$$
\|w\|_{H_0^1(\Omega)}=\|D w\|_{(L^2(\Omega))^N}\, \,\,\forall w\in H_0^1(\Omega).
$$

For every $s\in \mathbb{R}$ and every $k>0$ we define as usual 
$$s^+=\max\{s,0\},\,\,s^-=\max\{0,-s\},$$
$$T_k(s)=\max\{-k,\min\{s,k\}\},\,\,\,\,\,G_k (s)= s- T_k (s).$$

For every measurable function  $l:x\in\Omega\rightarrow l(x)\in [0,+\infty]$ we denote
$$
\{l=0\}=\{x\in\Omega: l(x)=0\},\,\,\,\,\,\{l>0\}=\{x\in\Omega: l(x)>0\}.
$$

Finally, in the present paper, we denote by $\varphi$ and $\overline{\varphi}$ functions which belong to $H_0^1(\Omega)\cap L^\infty(\Omega)$, while we denote by $\phi$ and $\overline{\phi}$  functions which belong to $\mathcal{D}(\Omega)$.

\smallskip
\section{Definition of a solution to problem \eqref{eqprima}}
\label{secdefi}

\noindent{ }{\bf 3.1. The space $\mathcal V(\Omega)$ of test functions}
\label{secdefiV}
 $\mbox{}$

  In order to introduce the notion of solution to problem \eqref{eqprima} that we will use in the present paper, we define the following space $\mathcal V(\Omega)$ of test functions and a notation.
\begin{definition}\label{spazio}
The space $\mathcal V(\Omega)$ is the space of the functions $v$ which satisfy
\begin{equation}
             \label{vs}
            v\in H_0^1(\Omega)\cap L^\infty(\Omega), 
\end{equation}
\begin{equation}
\label{condv}
\begin{cases}
\exists I\,\text{finite},\, \exists  \hat{\varphi}_i, \exists \hat{ g}_i,i\in I,\exists  \hat{ f},\mbox{ with}\\
 \hat{\varphi}_i \in H_0^1(\Omega)\cap L^\infty(\Omega), \hat{ g}_i\in (L^2(\Omega))^N , \hat{ f}\in L^1(\Omega),\\ \mbox{such that }
  \displaystyle-div\, {}^t\!A(x)Dv=\sum_{i \in I} \hat{\varphi}_i (-div \,\hat{ g}_i)+\hat{ f} \,\, \mbox{in } \mathcal{D}'(\Omega).
 \end{cases}
 \end{equation}
 \qed
\end{definition} 

 In the definition of $\mathcal{V}(\Omega)$ we use the notation $\hat{\varphi_i}$, $\hat{g_i}$, and $\hat{f}$ to help the reader to identify the functions which enter in the definition of the functions of $\mathcal{V}(\Omega)$.

Note that $\mathcal{V}(\Omega)$ is a vector space.


\begin{definition}
\label{def32}
 When  $v\in \mathcal V(\Omega)$ with $$\displaystyle-div\, {}^t\!A(x)Dv=\sum_{i \in I} \hat{\varphi}_i (-div \,\hat{ g}_i)+\hat{ f}\quad\mbox{in } \mathcal{D}'(\Omega),$$ 
 where $I$, $\hat{\varphi}_i$, $\hat{g}_i$ and $\hat{f}$ are as in \eqref{condv}, and when $y$ satisfies $$y\in H^1{\mbox{{\tiny loc}}}(\Omega)\cap L^\infty(\Omega) \mbox{ with } \varphi y\in H^1_0(\Omega)\, \forall\varphi\in H^1_0(\Omega)\cap L^\infty(\Omega), $$ we use the following notation: 
\begin{equation}\label{dc}
\langle\langle -div\, {}^t\!A(x)Dv, y \rangle\rangle_\Omega=  \sum_{i\in I} \displaystyle \int_\Omega \hat{ g}_i D(\hat{ \varphi}_i y)+\displaystyle \int_\Omega \hat{ f}  y \, \\.
\end{equation}
\qed
\end{definition}

\begin{remark}
\label{rem2}


 In  \eqref{condv},   the product  $ \hat{\varphi}_i (-div \,\hat{ g}_i)$ with  $\hat{\varphi}_i \in H_0^1(\Omega)\cap L^\infty(\Omega)$ and $\hat{ g}_i\in (L^2(\Omega))^N$ is, as usual, the distribution on $\Omega$ defined, for every $\phi\in \mathcal{D}(\Omega)$, as

\begin{equation}
\label{condv2} 
\langle \hat{\varphi}_i (-div \,\hat{ g}_i), \phi\rangle_{\mathcal D'(\Omega),\mathcal{D}(\Omega)} =\langle-div\, \hat{ g}_i, \,\hat{\varphi}_i \phi \rangle_{H^{-1}(\Omega),H^1_0(\Omega)}=\int_\Omega \hat{ g}_i D(\hat{ \varphi}_i \phi), 
\end{equation}
and the equality $\displaystyle-div\, {}^t\!A(x)Dv=\sum_{i \in I} \hat{\varphi}_i (-div \,\hat{ g}_i)+\hat{ f}$ holds in $\mathcal D'(\Omega)$.

  In notation \eqref{dc}, the right-hand side is correctly defined since $\hat{\varphi}_i y \in H^1_0(\Omega) $ and since $y\in L^\infty(\Omega)$. In contrast the left-hand side $\langle\langle -div\, {}^t\!A\,Dv, y \rangle\rangle_\Omega$ is just a notation. \qed
 \end{remark}
 \begin{remark}
 \label{rem34}
 If $y\in H^1_0(\Omega)\cap L^\infty(\Omega) $, then  $\varphi y\in H^1_0(\Omega)$ for every $\varphi \in H^1_0(\Omega)\cap L^\infty(\Omega)$, so that for every $v\in\mathcal V(\Omega)$, $\langle\langle -div\, {}^t\!A\,Dv, y \rangle\rangle_\Omega$ is defined. Let us prove that actually one has  
\begin{equation}\label{classic}
\begin{cases}
\, \forall v\in\mathcal{V}(\Omega), \, \forall y \in H_0^1(\Omega)\cap L^\infty(\Omega),\\
\langle\langle -div\, {}^t\!A(x)Dv, y \rangle\rangle_\Omega=\langle -div\, {}^t\!A(x)Dv, y\rangle_{H^{-1}(\Omega),H^1_0(\Omega) }. 
\end{cases}
\end{equation}
Indeed when $\displaystyle-div\, {}^t\!A(x)Dv=\sum_{i \in I} \hat{\varphi}_i (-div \,\hat{ g}_i)+\hat{ f}$, one has for every $\phi\in \mathcal{D}(\Omega)$ (see \eqref{dc} and \eqref{condv2})
\begin{eqnarray*}
\begin{cases}
\dys\langle\langle -div\, {}^t\!A(x)Dv, \phi \rangle\rangle_{\Omega}=\sum_{i\in I} \into \hat{g_i} D(\hat{\varphi_i} \phi)+\into \hat{f}\phi=\\
\dys= \langle\sum_{i\in I}  \hat{\varphi_i}(-div \hat{g_i})+\hat{f},\phi\rangle_{\mathcal D'(\Omega),\mathcal D(\Omega)}=\\
\dys=\langle -div\, {}^t\!A(x)Dv, \phi \rangle_{\mathcal D'(\Omega),\mathcal D(\Omega)}=\!\! \into\,\!\! {}^t\!A(x)DvD\phi,
\end{cases}
\end{eqnarray*}
and therefore
\begin{equation}
\label{condv4}
 \displaystyle\sum_{i\in I}\into \hat{g_i} D(\hat{\varphi_i} \phi)+\into \hat{f}\phi= \into \, {}^t\!A\, DvD\phi, \quad \forall\phi\in \mathcal D(\Omega).
\end{equation}
For every given function $y\in H_0^1(\Omega)\cap L^\infty(\Omega)$, taking a sequence $\phi_n\in \mathcal D(\Omega)$ such that 
\begin{equation*}
\phi_n\rightarrow y \quad \mbox{in } H_0^1(\Omega) \mbox{ strongly,}\,\, \phi_n\rightharpoonup y\,\, \mbox{in } L^\infty(\Omega) \mbox{ weakly-star},
\end{equation*}
and passing to the limit in \eqref{condv4} with $\phi=\phi_n$, we obtain \eqref{classic}. 
 \qed
\end{remark}
\begin{remark}
\label{examples}

 Let us give some examples of functions which belong to $\mathcal V(\Omega)$.
\smallskip
\begin{itemize}[leftmargin=*]
\item{$i)$} If  $\varphi_1, \varphi_2 \in H^1_0(\Omega)\cap L^\infty(\Omega)$, then $\varphi_1 \varphi_2 \in \mathcal V(\Omega)$. Indeed $\varphi_1\varphi_2 \in H^1_0(\Omega)\cap L^\infty(\Omega)$, and in the sense of distributions, one has
\begin{align}
                                \label{nrt}
-div\, {}^t\!A(x)D(\varphi_1\varphi_2)
=\hat\varphi_1(-div \,\hat{ g}_1)+\hat\varphi_2(-div \,\hat{ g}_2)+\hat{f}\, \mbox{ in } \mathcal{D}'(\Omega),                      
\end{align}                                
with $\hat\varphi_1=\varphi_1 \in H^1_0(\Omega)\cap L^\infty(\Omega)$, $\hat\varphi_2=\varphi_2 \in H^1_0(\Omega)\cap L^\infty(\Omega)$, $\hat{g_1}=\,{}^t\!A(x)D\varphi_2\in L^2(\Omega)^N$, $\hat{g_2}=\,{}^t\!A(x)D\varphi_1\in (L^2(\Omega))^N$, and $\hat{f}=-\,{}^t\!A(x)D\varphi_2D\varphi_1-\,{}^t\!A(x)D\varphi_1D\varphi_2\in L^1(\Omega)$.
\smallskip
\item{$ii)$} In particular, if $\varphi \in H^1_0(\Omega)\cap L^\infty(\Omega)$, then $\varphi^2 \in \mathcal V(\Omega)$, with
\begin{equation}
\label{condv5}
-div\, {}^t\!A(x)D\varphi^2=\hat\varphi (-div\, \hat{g})+\hat{f}\,\, {\rm in }\,\mathcal{D}'(\Omega),
\end{equation}
with $\hat\varphi=2\varphi \in H^1_0(\Omega)\cap L^\infty(\Omega)$, $\hat{g}=\, {}^t\!A(x)D\varphi\in (L^2(\Omega))^N$ and $\hat{f}=-2\, {}^t\!A(x)D\varphi D\varphi\in L^1(\Omega)$.
\smallskip
\item{$iii)$} If $\varphi\in H_0^1(\Omega)\cap L^\infty(\Omega)$ and has compact support, then $\varphi\in \mathcal V(\Omega)$.  Indeed
\begin{equation}
\label{39bis}
-div\, {}^t\!A(x)D\varphi=\overline{\phi}(-div\, {}^t\!A(x)D\varphi)\, \mbox{ in } \, \mathcal D'(\Omega),
\end{equation}
for every $\overline{\phi}\in\mathcal D(\Omega)$, with $\overline{\phi}= 1$ on the support of $\overline\phi$.
\smallskip
\item{$iv)$} 
In particular every $\phi\in \mathcal{D}(\Omega)$ belongs to $\mathcal V(\Omega)$.
\end{itemize}
\qed
\end{remark}
\smallskip

\noindent{ }{\bf 3.2. Definition of a solution to problem \eqref{eqprima}}
\label{sub32}
 $\mbox{}$
 
We now give the definition of a solution to problem \eqref{eqprima} that we will use in the present paper.
\begin{definition}\label{sol}
Assume that the matrix $A$ and the function $F$ satisfy \eqref{eq0.0}, \eqref{car} and \eqref{eq0.1}. We say that $u$ is a solution to problem \eqref{eqprima} if $u$ satisfies
\begin{equation}\label{sol1}
\begin{cases}
i)\, u\in L^2(\Omega)\cap H^1_{\mbox{{\tiny loc}}}(\Omega),\\
ii)\, u(x)\geq 0 \,\,\mbox{ a.e. } x\in\Omega,\\
iii)\, G_k(u)\in H_0^1(\Omega)\,\, \,\, \forall k>0,\\
iv)\, \varphi T_k(u)\in H^1_0(\Omega)\,\, \,\,\forall k>0,\,\, \forall \varphi \in H^1_0(\Omega)\cap L^\infty (\Omega),
\end{cases}
\end{equation}
\begin{align}\label{sol2}
\begin{cases}
\displaystyle\forall v \in \mathcal V(\Omega),\,v\geq 0,\\ \dys\mbox{with } -div \, {}^t\!A(x)Dv=\sum_{i \in I} \hat{ \varphi}_i (-div\, \hat{ g}_i)+\hat{ f} \mbox{ in } \mathcal{D}'(\Omega),\\
\mbox{where } \hat{\varphi_i}\in H_0^1(\Omega)\cap L^\infty(\Omega), \hat{g_i}\in (L^2(\Omega))^N, \hat{f}\in L^1(\Omega),\\
\mbox{one has}\\
i) \,\dys  \into F(x,u) v<+\infty,\\
ii) \,
\dys \into\, {}^t\!A(x)Dv DG_k(u)+  \displaystyle\sum_{i\in I} \into \hat{g_i} D(\hat{\varphi_i} T_k(u))+\into \hat{f} T_k(u)=\\=\langle -div\, {}^t\!A(x)Dv, G_k(u) \rangle_{H^{-1}(\Omega),H_0^1(\Omega)}+\langle\langle -div\, {}^t\!A(x)Dv, T_k(u)\rangle\rangle_{\Omega}=\\
\displaystyle =\into F(x,u) v \,\,\forall k>0.
\end{cases}
\end{align}
\qed
\end{definition}

\begin{remark}
\label{rem25}
\begin{itemize}[leftmargin=*]
\mbox{ }
\item{$i)$} Definition \ref{sol} is a mathematically correct framework which gives a meaning to the solution to problem \eqref{eqprima}; in contrast \eqref{eqprima} is only formal. 

In this Definition~\ref{sol}, the requirement \eqref{sol1} is the "space" (which is not a vectorial space) to which the solution should belong, while the requirement \eqref{sol2}, and specially (\ref{sol2} {\it ii}), expresses the partial differential equation in \eqref{eqprima} in terms of (non standard) test functions in the spirit of the solutions defined by transposition by J.-L. Lions and E. Magenes and by G. Stampacchia.
\smallskip

\item{$ii)$} Note that the statement (\ref{sol1} {\it iii}) formally contains the boundary condition ``$u=0$ on $\partial\Omega$". Indeed $G_k(u)\in H_0^1(\Omega)$ for every $k>0$ formally implies that ``$G_k(u)=0$ on $\partial\Omega$", i.e. ``$u\leq k$ on $\partial\Omega$" for every $k>0$, which implies ``$u=0$ on $\partial\Omega$" since $u\geq 0$ in $\Omega$.
\smallskip 
\item{$iii)$} In Section~\ref{estimates} below we will obtain a priori estimates for every solution $u$ to problem \eqref{eqprima} in the sense of Definition~\ref{sol} in the various ``spaces" which appear in \eqref{sol1}.

%
%
%
\smallskip 
\item{$iv)$} Note finally that (very) formally, one has
\begin{align*}
\begin{cases}
\dys``\langle -div\, {}^t\!A(x)Dv, G_k(u) \rangle_{H^{-1}(\Omega),H_0^1(\Omega)}=\int_{\Omega}(-div\, {}^t\!A(x)Dv) \,G_k(u)=\\
\dys=\int_{\Omega}v\, (-divA(x)DG_k(u))",
\end{cases}
\end{align*}
\begin{align*}
\begin{cases}
\dys ``\langle\langle -div\, {}^t\!A(x)Dv, T_k(u) \rangle\rangle_\Omega=\int_{\Omega}(-div\, {}^t\!A(x)Dv)\, T_k(u)=\\
\dys=\int_{\Omega}v\,(-divA(x)DT_k(u)",
\end{cases}
\end{align*}
so that (\ref{sol2} {\it ii}) formally means that
$$
``\int_{\Omega} v\, (-div\, {}^t\!A(x)Du)=\into F(x,u)v\, " \,\, \forall v\in \mathcal{V}(\Omega),\, v\geq0. 
$$
Since every $v$ can be written as $v=v^+-v^-$ with $v^+\geq 0$ and $v^-\geq 0$, one has formally 
(this is formal since we do not know whether $v^+$ and $v^-$ belong to 
$\mathcal{V}(\Omega)$ when $v$ belongs to $\mathcal{V}(\Omega)$)
$$
``-div\,A(x)Du=F(x,u)".
$$

Observe that the above formal computation has no meaning in general, while (\ref{sol2} {\it ii}) has a perfectly correct mathematical sense when $v\in\mathcal V(\Omega)$ and when $u$ satisfies \eqref{sol1}.
\qed
\end{itemize}

\end{remark}

The following Proposition~\ref{prop310} asserts that every solution to problem \eqref{eqprima} in the sense of Definition~\ref{sol} is a solution to \eqref{eqprima} in the sense of distributions. Note that Proposition~\ref{prop310} does not say anything about the boundary conditions satisfied by $u$ (see also Remark~\ref{rem25} {\it ii})).

\begin{proposition}
\label{prop310}
Assume that the matrix $A$ and the function $F$ satisfy \eqref{eq0.0}, \eqref{car} and \eqref{eq0.1}. Then for every solution to problem \eqref{eqprima} in the sense of Definition~\ref{sol} one has 
\begin{equation}
\label{3107}
u\geq 0 \mbox{ a.e. in } \Omega,\,u\in H^1_{\mbox{\tiny loc}}(\Omega),\, F(x,u)\in L^1_{\mbox{\tiny loc}}(\Omega),
\end{equation}
\begin{equation}
\label{3108}
-div\, A(x)Du=F(x,u)\mbox{ in } \mathcal{D}'(\Omega).
\end{equation}
\end{proposition}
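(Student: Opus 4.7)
The properties $u \geq 0$ a.e.\ in $\Omega$ and $u \in H^1_{\mbox{\tiny loc}}(\Omega)$ in \eqref{3107} are already built into Definition~\ref{sol} via (\ref{sol1} \textit{ii}) and (\ref{sol1} \textit{i}). For the local integrability of $F(x,u)$, the plan is to use Remark~\ref{examples} \textit{iv}: every $\phi \in \mathcal{D}(\Omega)$ belongs to $\mathcal{V}(\Omega)$, so (\ref{sol2} \textit{i}) applied to any nonnegative $\phi \in \mathcal{D}(\Omega)$ yields $\int_\Omega F(x,u)\phi < +\infty$; dominating $\chi_K$ by such a $\phi$ for each compact $K \subset \Omega$ gives $F(x,u) \in L^1_{\mbox{\tiny loc}}(\Omega)$.

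For the distributional equation \eqref{3108}, I would plug $v = \phi$ with $\phi \in \mathcal{D}(\Omega)$, $\phi \geq 0$, into (\ref{sol2} \textit{ii}), using the concrete decomposition from Remark~\ref{examples} \textit{iii}--\textit{iv}: choose $\overline{\phi} \in \mathcal{D}(\Omega)$ with $\overline{\phi} \equiv 1$ on a neighborhood of $\mathrm{supp}\,\phi$, which corresponds to $\hat{\varphi}_1 = \overline{\phi}$, $\hat{g}_1 = {}^t\!A(x) D\phi \in (L^2(\Omega))^N$ and $\hat{f} = 0$. The identity (\ref{sol2} \textit{ii}) then reads
$$
\int_\Omega {}^t\!A(x)D\phi\, DG_k(u) + \int_\Omega {}^t\!A(x)D\phi\, D(\overline{\phi}\, T_k(u)) = \int_\Omega F(x,u)\phi.
$$
Since $\overline{\phi}\,T_k(u) \in H_0^1(\Omega)$ by (\ref{sol1} \textit{iv}) with $\overline{\phi} \in H_0^1(\Omega)\cap L^\infty(\Omega)$ of compact support, and since $\overline{\phi}$ is smooth with $T_k(u) \in H^1_{\mbox{\tiny loc}}(\Omega)\cap L^\infty(\Omega)$, the product rule
$$
D(\overline{\phi}\, T_k(u)) = T_k(u)\, D\overline{\phi} + \overline{\phi}\, DT_k(u)
$$
holds in $(L^2(\Omega))^N$. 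As $D\overline{\phi} = 0$ on a neighborhood of $\mathrm{supp}\, D\phi$ and $\overline{\phi} = 1$ there, the second integral collapses to $\int_\Omega A(x)DT_k(u)\,D\phi$. Combining with the first integral, using $Du = DT_k(u) + DG_k(u)$ in $L^2_{\mbox{\tiny loc}}(\Omega)$, yields $\int_\Omega A(x)Du\, D\phi = \int_\Omega F(x,u)\phi$ for every nonnegative $\phi \in \mathcal{D}(\Omega)$.

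To remove the sign constraint, I would apply a standard linearity trick: given an arbitrary $\phi \in \mathcal{D}(\Omega)$, fix $\psi \in \mathcal{D}(\Omega)$ with $\psi \geq 0$ and $\psi \equiv 1$ on $\mathrm{supp}\,\phi$, and set $C = \|\phi\|_{L^\infty(\Omega)}$. Then both $\phi + C\psi$ and $C\psi$ lie in $\mathcal{D}(\Omega)$ and are nonnegative; applying the already-established identity to each (both integrals are absolutely convergent since $F(x,u) \in L^1_{\mbox{\tiny loc}}(\Omega)$ and the test functions are compactly supported) and subtracting gives \eqref{3108}.

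The only mildly delicate point is the manipulation of the $T_k(u)$-term: one has to verify that $\overline{\phi}$ can be chosen so that $\overline{\phi}\,T_k(u)$ coincides with $T_k(u)$ in a full neighborhood of $\mathrm{supp}\,\phi$, and that the product rule together with the vanishing of $D\overline{\phi}$ on that neighborhood converts the abstract notation $\langle\langle -\mathrm{div}\, {}^t\!A(x)D\phi, T_k(u)\rangle\rangle_\Omega$ back into the natural $\int_\Omega A(x)DT_k(u)\, D\phi$. Everything else is simply a combination of (\ref{sol1}), (\ref{sol2}) and the examples in Remark~\ref{examples}.
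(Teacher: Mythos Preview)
Your proof is correct and follows essentially the same route as the paper: you use Remark~\ref{examples}~{\it iv}) to get $F(x,u)\in L^1_{\mbox{\tiny loc}}(\Omega)$, and then plug $\phi\in\mathcal D(\Omega)$ with the decomposition $\hat\varphi=\overline\phi$, $\hat g={}^t\!A D\phi$, $\hat f=0$ into (\ref{sol2}~{\it ii}) and simplify using $\overline\phi\equiv 1$ near $\mathrm{supp}\,\phi$ to recover $\int_\Omega A Du\,D\phi=\int_\Omega F(x,u)\phi$ for $\phi\geq 0$. The only cosmetic difference is the last step: the paper simply applies the identity to $-\phi$ when $\phi\leq 0$ (equivalently, notes that the identity for nonnegative $\phi$ gives both $\geq$ and $\leq$ in $\mathcal D'(\Omega)$), whereas you decompose an arbitrary $\phi$ as $(\phi+C\psi)-C\psi$; both arguments are equivalent and equally valid.
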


\begin{proof}
Since every $\phi \in \mathcal{D}(\Omega)$ belongs to  $\mathcal{V}(\Omega)$ (see Remark~\ref{examples}~$iv)$), assumption (\ref{sol2} $i$) implies that
\begin{equation}
\label{31222}
\into F(x,u)\phi<+\infty, \quad \forall \phi\in \mathcal{D}(\Omega),\quad \phi\geq 0,
\end{equation}
which together with $F(x,u)\geq 0$ implies that $F(x,u)\in L^1_{\mbox{\tiny{loc}}}(\Omega)$.

Therefore \eqref{3107} holds true when $u$ is a solution to problem \eqref{eqprima} in the sense of Definition~\ref{sol}.
 
For what concerns \eqref{3108},  for every $\phi\in\mathcal{D}(\Omega)$ and $\overline{\phi}\in\mathcal{D}(\Omega)$ with $\overline{\phi}= 1$ on the support of $\phi$, one has  (see \eqref{39bis} with $\varphi=\phi$ as far as the second term is concerned)
\begin{eqnarray*}
\begin{cases}
\dys\langle -div\, {}^t\!A(x)D\phi, G_k(u) \rangle_{H^{-1}(\Omega),H_0^1(\Omega)}+\dys\langle\langle -div\, {}^t\!A(x)D\phi, T_k(u) \rangle\rangle_\Omega=\\
=\dys\int_{\Omega}\, {}^t\!A(x)D\phi \,DG_k( u)
+\dys\int_{\Omega}\, {}^t\!A(x)D\phi\, D(\overline{\phi} T_k(u))=\\
\dys =\int_{\Omega}\, {}^t\!A(x)D\phi Du= \into A(x)Du D\phi,
\end{cases}
\end{eqnarray*}
where we have used the fact that $u\in H^1_{\mbox{\tiny loc}}(\Omega)$ and that $\overline{\phi}= 1$ on the support of $\phi$. This implies that when $u$ is a solution to problem \eqref{eqprima} in the sense of Definition~\ref{sol}, one has 
\begin{equation}
\label{32222}
\into A(x)Du D\phi= \into F(x,u)\phi \quad \forall \phi\in\mathcal{D}(\Omega), \quad \phi\geq 0,
\end{equation}
namely 
$$
-div\, A(x) Du\geq F(x,u) \mbox{ in } \mathcal{D}'(\Omega).
$$
But \eqref{32222} also implies that
$$
\into A(x)Du D\phi= \into F(x,u)\phi, \quad \forall \phi \in\mathcal{D}(\Omega), \quad \phi\leq 0,
$$
namely 
$$
-div\, A(x) Du\leq F(x,u) \mbox{ in } \mathcal{D}'(\Omega).
$$

 This proves \eqref{3108}.
\end{proof}

\begin{remark}
\label{r314}
When $u$ satisfies \eqref{sol1}, then one has 
\begin{equation}
\label{3111}
\varphi Du\in (L^2(\Omega))^N\,\,\, \forall \varphi \in H_0^1(\Omega)\cap L^\infty(\Omega).
\end{equation}
More precisely, when $u$ satisfies (\ref{sol1}~{\it i}) and  (\ref{sol1}~{\it iii}), assertion  (\ref{sol1}~{\it iv}) is equivalent to \eqref{3111}.

Indeed, if $y\in H^1_{\mbox{\tiny loc}}(\Omega)$ and $\varphi \in H_0^1(\Omega)\cap L^\infty(\Omega)$, one has for every $k>0$
\begin{equation*}
\begin{cases}
\varphi Dy =\varphi DT_k(y)+\varphi DG_k(y) \mbox{ in }  (\mathcal D'(\Omega))^N,\\
D(\varphi T_k(y))=\varphi D T_k(y)+ T_k(y)D\varphi \mbox{ in } (\mathcal D'(\Omega))^N,
\end{cases}
\end{equation*}
and therefore
$$
\varphi Dy-D(\varphi T_k(y))=\varphi DG_k(y)-T_k(y)D\varphi \mbox{ in }  (\mathcal D'(\Omega))^N.
$$
This in particular implies that 
$$
\varphi Dy-D(\varphi T_k(y)) \mbox{ belongs to } (L^2(\Omega))^N
$$
when $y\in H^1_{\mbox{\tiny loc}}(\Omega)$, $G_k(y)\in H_0^1(\Omega)$ and $\varphi \in H_0^1(\Omega)\cap L^\infty(\Omega)$.

Therefore if  $y\in H^1_{\mbox{\tiny loc}}(\Omega)$, $G_k(y)\in H_0^1(\Omega)$ and $\varphi \in H_0^1(\Omega)\cap L^\infty(\Omega)$, one has 
$$
\varphi Dy\in (L^2(\Omega))^N\Longleftrightarrow D(\varphi T_k(y))\in (L^2(\Omega))^N,
$$
which in view of Lemma~\ref{lem0} below and of the inequality 
$
-\varphi k\leq \varphi T_k(u)\leq +\varphi k
$
implies, since $\varphi T_k(y)\in L^\infty(\Omega)\subset L^2(\Omega)$, that
$$
\varphi Dy\in (L^2(\Omega))^N\Longleftrightarrow \varphi T_k(y)\in H_0^1(\Omega).
$$

Taking $y=u$ gives the equivalence between (\ref{sol1} {\it iv}) and \eqref{3111}.
\qed
\end{remark}
\smallskip

\section{Statements of the existence, stability\\ and uniqueness results}
\label{stat}

In this section we state results of existence, stability and uniqueness of the solution to problem \eqref{eqprima} in the sense of Definition~\ref{sol}.

\begin{theorem}{\bf(Existence)}. 
  \label{EUS}
Assume that the matrix $A$ and the function $F$ satisfy \eqref{eq0.0}, \eqref{car} and \eqref{eq0.1}. Then there exists at least one solution $u$ to  problem \eqref{eqprima} in the sense of Definition~\ref{sol}. 
\end{theorem}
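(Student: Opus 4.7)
The plan is to construct a solution as the limit of classical solutions to a sequence of non-singular approximating problems, and then to invoke the Stability Theorem~\ref{est} to identify the limit as a solution of~\eqref{eqprima} in the sense of Definition~\ref{sol}.

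First, I would remove the singularity of $F$ at $s=0$ by setting, for each $n\in\mathbb{N}$,
\begin{equation*}
F_n(x,s) \,=\, T_n\!\left( F\!\left(x,\, s + \tfrac{1}{n}\right) \right).
\end{equation*}
Each $F_n$ is a nonnegative Carath\'eodory function in $L^\infty(\Omega)$, no longer singular at $s=0$, and satisfies the uniform domination $0 \le F_n(x,s) \le h(x)/\Gamma(s)$ for every $s > 0$ (since $\Gamma$ is increasing). Moreover $F_n(x,s) \to F(x,s)$ for every $s>0$ and a.e.\ $x \in \Omega$, so the sequence fits the hypotheses of the Stability Theorem~\ref{est} with the same $h$ and $\Gamma$ as in~\eqref{eq0.1}.

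Second, I would solve the regularized problem
\begin{equation*}
\begin{cases}
u_n \in H_0^1(\Omega),\\
-\operatorname{div}\,A(x) Du_n = F_n(x,u_n) \quad \text{in } \mathcal{D}'(\Omega),
\end{cases}
\end{equation*}
by applying Schauder's fixed point theorem to the map $w \in L^2(\Omega) \mapsto u \in H_0^1(\Omega) \hookrightarrow L^2(\Omega)$, where $u$ is the unique weak solution of $-\operatorname{div}\,A(x)Du = F_n(x,w)$. Since $F_n \in L^\infty(\Omega)$, this map is continuous, compact (by Rellich) and sends a sufficiently large ball of $L^2(\Omega)$ into itself, hence has a fixed point $u_n$. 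Testing the equation with $u_n^-$, using $F_n \ge 0$, the coercivity of $A$, and the orthogonality of $Du_n^+$ and $Du_n^-$, then yields $u_n \ge 0$ a.e.\ in $\Omega$.

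Third, I would apply the Stability Theorem~\ref{est} to the sequence $(F_n, u_n)$. The uniform a priori estimates of Section~\ref{estimates} provide bounds on $G_k(u_n)$ in $H_0^1(\Omega)$ (Proposition~\ref{prop1}), on $\varphi\,Du_n$ in $(L^2(\Omega))^N$ for every $\varphi \in H_0^1(\Omega)\cap L^\infty(\Omega)$ (Proposition~\ref{prop2}), and on $\beta(u_n)$ in $H_0^1(\Omega)$ (Proposition~\ref{lem2}); crucially, Proposition~\ref{prop3} controls $\int_{\{u_n \le \delta\}} F_n(x,u_n)\,v$ uniformly in $n$ with a bound that vanishes as $\delta \to 0^+$. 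Up to a subsequence, $u_n$ converges to some $u$ in the appropriate topologies, and this limit satisfies \eqref{sol1} and \eqref{sol2}.

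The main obstacle is passing to the limit in the nonlinear singular term $\int_\Omega F_n(x,u_n)\,v$ for $v \in \mathcal{V}(\Omega)$, $v\ge 0$: on $\{u > \delta\}$ one argues by dominated convergence with dominant $h(x)/\Gamma(\delta) \in L^r(\Omega)$, while on $\{u_n \le \delta\}$ one invokes Proposition~\ref{prop3} to render the contribution uniformly small, letting $\delta \to 0^+$ at the end. The passage to the limit in the left-hand side of (\ref{sol2}~{\it ii}) is then routine by the weak convergences of $G_k(u_n)$ and $\hat\varphi_i T_k(u_n)$ in $H_0^1(\Omega)$ and of $T_k(u_n)$ in $L^\infty$ weak-$*$. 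This is precisely the mechanism built into Theorem~\ref{est}, so once the approximating family $(F_n,u_n)$ is seen to satisfy its hypotheses, the existence of a solution to \eqref{eqprima} in the sense of Definition~\ref{sol} follows from its conclusion.
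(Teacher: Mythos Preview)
Your approach is correct and essentially the same as the paper's: regularize $F$ by a bounded Carath\'eodory approximation $F_n$, solve the resulting nonsingular problem in $H_0^1(\Omega)$ by Schauder's fixed point theorem and the weak maximum principle, and then invoke the Stability Theorem~\ref{est}. The paper uses the simpler truncation $F_n(x,s)=T_n(F(x,s))$ instead of your shifted version $T_n(F(x,s+\tfrac{1}{n}))$, and makes explicit one point you glossed over, namely that the $H_0^1(\Omega)$ solution $u_n$ is a solution of \eqref{eqprima}$_n$ in the sense of Definition~\ref{sol} (this follows from \eqref{classic} with $y=T_k(u_n)$); you should also check hypothesis \eqref{num1} at $s_\infty=0$, not only for $s>0$.
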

The proof of Theorem~\ref{EUS} will be done in Subsection~6.2. It relies on a stability result (see Theorem~\ref{est} below) and on a priori estimates of  $\|G_k(u)\|_{H_0^1(\Omega)}$ for every $k>0$, of $\|\varphi \,DT_k(u)\|_{(L^2(\Omega))^N}$ for every $\varphi\in H_0^1(\Omega)\cap L^\infty(\Omega)$ and of $\dys \int_{\{u\leq \delta\}}F(x,u)v$ for every $\delta>0$ and every $v\in \mathcal{V}(\Omega)$, $v\geq 0$, which are satisfied by every solution $u$ to problem \eqref{eqprima} in the sense of Definition~\ref{sol} (see Propositions~\ref{prop1}, \ref{prop2} and \ref{prop3}). 

  \begin{theorem}{\bf(Stability)}.
\label{est}
 Assume that the matrix $A$ satisfies assumption \eqref{eq0.0}. Let $F_n$ be a sequence of functions and $F_\infty$ be a function which all  satisfy assumptions
\eqref{car} and \eqref{eq0.1} for the same $h$ and the same $\Gamma$. Assume moreover that 
\begin{equation}
 \label{num1}
 \mbox{ a.e. } x\in \Omega, \, F_n(x,s_n)\rightarrow\! F_\infty(x,s_\infty) \mbox{ if } s_n\rightarrow s_{\infty}, s_n\geq 0,s_\infty\geq0.
 \end{equation} 
Let $u_n$ be any solution to problem \eqref{eqprima}$_n$ in the sense of Definition~\ref{sol}, where \eqref{eqprima}$_n$ is the problem \eqref{eqprima} with $F(x,s)$ replaced by $F_n(x,s)$. 

Then there exists a subsequence, still labelled by $n$, and a function $u_\infty$, which is a solution to problem  \eqref{eqprima}$_\infty$ in the sense of Definition~\ref{sol}, such that
\begin{align}
\label{num2}
\begin{cases}
\dys u_n\rightarrow u_\infty \mbox{ in } L^2(\Omega)\mbox{ strongly, in } H^1_{\mbox{\tiny loc}}(\Omega) \mbox{ strongly and a.e. in }\Omega,\\
\dys  G_k(u_n)\rightarrow  G_k(u_\infty) \, \mbox{ in } H_0^1(\Omega)\, \mbox{strongly } \forall k>0,\\
\dys \varphi T_k (u_n)\rightarrow \varphi T_k (u_{\infty}) \, \mbox{ in } H^1_0(\Omega)\, \mbox{strongly } \forall k>0,\,\,\forall \varphi\in H_0^1(\Omega)\cap L^\infty(\Omega).\\
\end{cases}
\end{align}
\end{theorem}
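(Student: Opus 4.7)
Since all the $F_n$ satisfy \eqref{car}--\eqref{eq0.1} with a common $h$ and $\Gamma$, the a priori estimates of Section~\ref{estimates} apply uniformly in $n$. I will use in particular the uniform bounds on $\|G_k(u_n)\|_{H_0^1(\Omega)}$ (Proposition~\ref{prop1}), on $\|\varphi\,DT_k(u_n)\|_{(L^2(\Omega))^N}$ for every $\varphi\in H_0^1(\Omega)\cap L^\infty(\Omega)$ (Proposition~\ref{prop2}), on $\|\beta(u_n)\|_{H_0^1(\Omega)}$ (Proposition~\ref{lem2}), and the control of $\int_{\{u_n\le \delta\}}F_n(x,u_n)v$ for every $v\in\mathcal V(\Omega)$, $v\ge 0$, by a constant independent of $n$ which tends to $0$ as $\delta\to 0^+$ (Proposition~\ref{prop3}).

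\smallskip

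Applying the compact embedding $H_0^1(\Omega)\hookrightarrow L^2(\Omega)$ to $\beta(u_n)$, I extract a subsequence along which $\beta(u_n)$ converges strongly in $L^2(\Omega)$ and a.e.; since $\beta$ is continuous and strictly increasing on $[0,+\infty[$ (as $\beta'(s)=\sqrt{\Gamma'(s)}>0$ for $s>0$), the inverse $\beta^{-1}$ is continuous, so $u_n\to u_\infty$ a.e.\ in $\Omega$ for some nonnegative $u_\infty$. Refining by diagonal extraction I also obtain weak convergence of $G_k(u_n)$ and of $\varphi T_k(u_n)$ in $H_0^1(\Omega)$ for every $k>0$ and every $\varphi$ in a chosen countable family (from which the general case follows by density), together with weak convergence of $u_n$ in $H^1_{\rm loc}(\Omega)$. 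Since $u_n=T_k(u_n)+G_k(u_n)$ is uniformly bounded in $L^{2^*}(\Omega)$ (with $2^*$ understood as in~\eqref{212bis} when $N\le 2$), $u_n\to u_\infty$ strongly in $L^2(\Omega)$ by Vitali's theorem. At this stage $u_\infty$ already satisfies~\eqref{sol1}.

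\smallskip

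Fix now $v\in\mathcal V(\Omega)$, $v\ge 0$, with the decomposition~\eqref{condv}, and $k>0$. The left-hand side of (\ref{sol2}~\emph{ii}) written for $u_n$ is linear in $(G_k(u_n),\,\hat\varphi_i T_k(u_n),\,T_k(u_n))$ with $n$-independent coefficients $\,{}^t\!A\,Dv$, $\hat g_i$, $\hat f$, so the weak convergences above together with the strong convergence of $T_k(u_n)$ in every $L^p(\Omega)$, $p<\infty$, allow one to pass to the limit directly. For the right-hand side I fix $\delta>0$ and split
\[
\int_\Omega F_n(x,u_n)\,v = \int_{\{u_n>\delta\}} F_n(x,u_n)\,v + \int_{\{u_n\le\delta\}} F_n(x,u_n)\,v.
\]
On $\{u_n>\delta\}$ the integrand is bounded by $h(x)\,v/\Gamma(\delta)\in L^1(\Omega)$ (as $h\in L^r\subset L^1$ and $v\in L^\infty$), and by~\eqref{num1} together with $u_n\to u_\infty$ a.e.\ it converges pointwise a.e.\ to $F_\infty(x,u_\infty)\,v\,\chi_{\{u_\infty>\delta\}}$ (choosing $\delta$ outside the at most countable set of levels with $|\{u_\infty=\delta\}|>0$); dominated convergence applies. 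The integral on $\{u_n\le\delta\}$ is bounded by Proposition~\ref{prop3} uniformly in $n$ by a quantity $\omega(\delta,v)\to 0$ as $\delta\to 0^+$; the same bound at the limit follows by Fatou. Sending first $n\to\infty$ and then $\delta\to 0^+$ yields
\[
\int_\Omega F_n(x,u_n)\,v \longrightarrow \int_\Omega F_\infty(x,u_\infty)\,v<+\infty,
\]
which gives both (\ref{sol2}~\emph{i}) and (\ref{sol2}~\emph{ii}) for $u_\infty$.

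\smallskip

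Finally I upgrade the weak convergences to the strong ones claimed in~\eqref{num2}, following the usual pattern ``weak plus norm convergence $\Rightarrow$ strong'' in Hilbert space. Testing (\ref{sol2}~\emph{ii}) with $v=\hat\varphi^2\in\mathcal V(\Omega)$ (Remark~\ref{examples}~\emph{ii}) for $\varphi\in\mathcal D(\Omega)$ provides convergence of $\|\varphi Du_n\|_2$ to $\|\varphi Du_\infty\|_2$, hence strong $H^1_{\rm loc}$ convergence; analogous choices of test functions in $\mathcal V(\Omega)$, combined with the fact that $G_k(u_n)\in H_0^1(\Omega)$ and $\varphi T_k(u_n)\in H_0^1(\Omega)$, give the strong $H_0^1$-convergences of $G_k(u_n)$ and of $\varphi T_k(u_n)$. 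In every instance the convergence of the singular right-hand side through the limit is exactly the argument of the previous paragraph. The main obstacle of the whole proof is thus this uniform control of $\int_\Omega F_n(x,u_n)v$ near the singular set $\{u_\infty=0\}$; its availability for arbitrary $v\in\mathcal V(\Omega)$, granted by Proposition~\ref{prop3}, is exactly what makes the stability argument work and motivates the introduction of the non-standard space of test functions $\mathcal V(\Omega)$.
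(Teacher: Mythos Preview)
Your overall architecture matches the paper's: uniform a priori estimates, extraction of a subsequence with the weak convergences, passage to the limit in~(\ref{sol2}~\emph{ii}) by splitting the right-hand side near/away from the singular set, and finally an upgrade to strong convergence via energy identities. The use of $\beta(u_n)$ for the a.e.\ extraction is a legitimate variant of the paper's $H^1_{\rm loc}$ route.

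There is, however, a genuine gap in the handling of the singular part. You assert that Proposition~\ref{prop3} bounds $\int_{\{u_n\le\delta\}}F_n(x,u_n)\,v$ ``uniformly in $n$ by a quantity $\omega(\delta,v)\to 0$''. This is not what~\eqref{5701bis} says: the bound contains the term $\sum_i\int_\Omega Z_\delta(u_n)\,\hat g_i\,Du_n\,\hat\varphi_i$, which depends on $u_n$ (cf.\ Remark~\ref{510bis}); and while it is bounded uniformly in $n$ (since $\hat\varphi_i Du_n$ is bounded in $(L^2(\Omega))^N$), there is no reason for $\sup_n$ of this term to tend to $0$ as $\delta\to 0$. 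The paper's argument is more delicate: for $\delta>0$ fixed one passes to the limit in $n$ in this term, using that $Z_\delta(u_n)\hat g_i\to Z_\delta(u_\infty)\hat g_i$ strongly in $(L^2(\Omega))^N$ while $\hat\varphi_i Du_n\rightharpoonup \hat\varphi_i Du_\infty$ weakly; only then does one let $\delta\to 0$, using that $Z_\delta(u_\infty)\to\chi_{\{u_\infty=0\}}$ and $Du_\infty=0$ a.e.\ on $\{u_\infty=0\}$. The same two-step limit is needed again in the energy argument for the strong convergence of $\varphi DT_k(u_n)$ (Seventh step in the paper), where the singular term $\int F_n(x,u_n)\varphi^2 T_k(u_n)$ reappears.

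Also, your sketch of the strong-convergence step is too loose: testing with $v=\varphi^2$ yields $2\int_\Omega \varphi\,A\,Du_n\,D\varphi=\int_\Omega F_n(x,u_n)\varphi^2$, which contains no quadratic term in $Du_n$ and hence does not give convergence of $\|\varphi Du_n\|_{(L^2)^N}$. The paper instead uses the energy identity~\eqref{condfc3} (coming from the admissible test function $S_{k,n}(u)$) to get strong convergence of $G_k(u_n)$, and equation~\eqref{num14} (coming from $v=\varphi^2 T_k(u)\in\mathcal V(\Omega)$) to isolate $\int_\Omega A\,DT_k(u_n)\,DT_k(u_n)\,\varphi^2$ and deduce strong convergence of $\varphi DT_k(u_n)$.
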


The proof of the Stability Theorem~\ref{est} will be done in Subsection~6.1.

Note that assumption \eqref{num1} is some type of uniform convergence of the functions $F_n(x,s)$ to the function $F_\infty(x,s)$, which is nevertheless non standard since this function can take the value $+\infty$ for $s=0$.

\bigskip 
 
  Finally, the following uniqueness result is an immediate consequence of the Comparison Principle stated and proved in Section~\ref{comparison} below. Note that both the Uniqueness Theorem~\ref{uniqueness} and the Comparison Principle of Proposition~\ref{prop0} are based on the nonincreasing character of the function $F(x,s)$ with respect to $s$.
 
 \begin{theorem}{\bf(Uniqueness)}. 
 \label{uniqueness}
 Assume that the matrix $A$ and the function $F$ satisfy \eqref{eq0.0}, \eqref{car} and \eqref{eq0.1}.  Assume moreover that the function $F(x,s)$ is nonincreasing with respect to $s$, i.e.  satisfies assumption \eqref{eq0.2}. Then the solution to problem \eqref{eqprima} in the sense of Definition~\ref{sol} is unique.
 
 \end{theorem}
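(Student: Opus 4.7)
The plan is to derive Theorem~\ref{uniqueness} directly from the Comparison Principle of Proposition~\ref{prop0}. If $u_1$ and $u_2$ are two solutions to \eqref{eqprima} in the sense of Definition~\ref{sol}, applying the Comparison Principle with the roles of $u_1$ and $u_2$ interchanged yields simultaneously $u_1 \le u_2$ and $u_2 \le u_1$ almost everywhere in $\Omega$, so $u_1 = u_2$. The entire substance of the argument is therefore concentrated in the Comparison Principle, and what follows is a sketch of the strategy I would adopt to prove it.

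Given two solutions $u_1, u_2$, the natural idea is to subtract the transposition identity (\ref{sol2}~$ii$) written for $u_1$ and $u_2$ against the same nonnegative test function $v \in \mathcal V(\Omega)$, obtaining formally
\begin{equation*}
\int_{\Omega} (u_1 - u_2)(-\text{div}\,{}^t\!A(x) Dv) \;=\; \int_{\Omega}\bigl(F(x,u_1) - F(x,u_2)\bigr)\,v.
\end{equation*}
To detect the set $\{u_1 > u_2\}$, the optimal choice is the Stampacchia-type solution of the dual problem
\begin{equation*}
v \in H_0^1(\Omega)\cap L^\infty(\Omega),\qquad -\text{div}\,{}^t\!A(x)Dv = \chi_{\{u_1 > u_2\}} \;\;\text{in }\mathcal D'(\Omega),
\end{equation*}
which is nonnegative by the weak maximum principle and belongs to $\mathcal V(\Omega)$ by taking $I = \emptyset$ and $\hat f = \chi_{\{u_1>u_2\}}\in L^\infty(\Omega)\subset L^1(\Omega)$. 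With this choice, the left-hand side of the formal identity equals $\int_\Omega (u_1-u_2)^+ \ge 0$, while the monotonicity assumption \eqref{eq0.2} forces $(F(x,u_1)-F(x,u_2))\,v \le 0$ on $\{u_1 > u_2\}$ (since there $F(x,u_1) \le F(x,u_2)$ and $v \ge 0$), so the right-hand side is $\le 0$. Hence $(u_1-u_2)^+ = 0$, i.e.\ $u_1 \le u_2$ almost everywhere.

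The main obstacle will be to make this formal subtraction rigorous within the definition by transposition. The identity (\ref{sol2}~$ii$) does not directly produce $\int_\Omega (-\text{div}\,{}^t\!A Dv)\,u_i$ but splits this duality into an $H^{-1}(\Omega)/H_0^1(\Omega)$ pairing against $G_k(u_i)$ and the ``formal'' pairing $\langle\langle\,\cdot\,,\,\cdot\,\rangle\rangle_\Omega$ against $T_k(u_i)$, while the right-hand side features the singular quantity $F(x,u_i)$. I would pass to the limit $k \to \infty$ on both sides: the $G_k$-term is handled via the $H_0^1$-estimate of Proposition~\ref{prop1}; the $T_k$-term via the bound on $\hat\varphi_i\,DT_k(u_i)$ in $(L^2(\Omega))^N$ from Proposition~\ref{prop2}, the factors $\hat\varphi_i$ being exactly those arising in any chosen decomposition of $-\text{div}\,{}^t\!A Dv$; and the right-hand side by monotone convergence, combined with the integrability $\int_\Omega F(x,u_i)\,v < +\infty$ granted by (\ref{sol2}~$i$) and with the quantitative smallness of $\int_{\{u_i \le \delta\}} F(x,u_i)\,v$ from Proposition~\ref{prop3}. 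The $H_0^1$-regularity of $\beta(u_i)$ supplied by Proposition~\ref{lem2} is decisive here, as it is the tool that transfers the vanishing of $u_i$ at $\partial\Omega$, where $F(x,u_i)$ is singular, into the uniform integrability needed against the test functions $v \in \mathcal V(\Omega)$. Once the rigorous version of the displayed identity is in place, the monotonicity argument above concludes both the Comparison Principle and the Theorem.
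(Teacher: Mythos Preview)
Your reduction of the Uniqueness Theorem to the Comparison Principle is exactly what the paper does, and that first paragraph is fine.

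The sketch you give for the Comparison Principle, however, has a real gap in the sign argument. The function $v$ you construct by solving $-\text{div}\,{}^t\!A(x)Dv=\chi_{\{u_1>u_2\}}$ is, by the maximum principle, strictly positive on each connected component of $\Omega$ that meets $\{u_1>u_2\}$; it does \emph{not} vanish on $\{u_1\le u_2\}$. On that complementary set the monotonicity of $F$ gives $F(x,u_1)-F(x,u_2)\ge 0$, so the integrand $(F(x,u_1)-F(x,u_2))\,v$ is nonnegative there, and you cannot conclude that the right-hand side $\int_\Omega(F(x,u_1)-F(x,u_2))\,v$ is $\le 0$. The duality trick works cleanly for linear problems because the right-hand side of the difference vanishes identically; here the nonlinear term survives and its sign is only controlled on the ``good'' set. (Your passage to the limit in $k$ would in fact go through for this particular $v$, since with $I=\emptyset$ one gets directly $\int_\Omega{}^t\!A Dv\,D(G_k(u_1)-G_k(u_2))+\int_\Omega\chi_{\{u_1>u_2\}}(T_k(u_1)-T_k(u_2))=\int_\Omega(F(x,u_1)-F(x,u_2))v$ and both left terms converge easily; but this identity alone does not give the desired sign.)

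The paper avoids this difficulty by choosing as test function not a dual solution but $\varphi^2$ with $\varphi=B_1(T_k^+(u_1-u_2))$, where $B_1(s)=\int_0^s\beta_1(t)\,dt$ and $\beta_1$ is the function from Proposition~\ref{lem2}. The point is that $\varphi$ \emph{does} vanish on $\{u_1\le u_2\}$, so the right-hand side becomes $\int_\Omega(F_1(x,u_1)-F_2(x,u_2))\,\varphi^2\le 0$ immediately from \eqref{eq0.2} and \eqref{condfc}. The regularity $\beta_1(u_1)\in H_0^1(\Omega)$ from Proposition~\ref{lem2} is what makes $\varphi\in H_0^1(\Omega)\cap L^\infty(\Omega)$, hence $\varphi^2\in\mathcal V(\Omega)$; after subtracting the two equations one obtains $\int_\Omega A\,DT_k^+(u_1-u_2)\,DT_k^+(u_1-u_2)\,B_1(\cdot)\beta_1(\cdot)\le 0$, which forces $T_k^+(u_1-u_2)$ to be constant on each connected component, and then the $H_0^1$ boundary information pins that constant to zero.
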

 
%

 \begin{remark}
 When assumptions \eqref{eq0.0}, \eqref{car}, \eqref{eq0.1} as well as \eqref{eq0.2} hold true, Theorems~\ref{EUS}, \ref{est} and \ref{uniqueness} together  assert that problem \eqref{eqprima} is well posed in the sense of Hadamard in the framework of Definition~\ref{sol}.
 \qed
 \end{remark}
 
\smallskip

 \section{A  priori estimates}
 \label{estimates}

In this section we state and prove a priori estimates which are satisfied by every solution to problem \eqref{eqprima} in the sense of Definition~\ref{sol}.

\smallskip
\noindent{ }{\bf  5.1. A priori estimate of $G_k(u)$ in $H_0^1(\Omega)$}
\label{sub51}

\begin{proposition}{\bf (A priori estimate of $G_k(u)$ in $H_0^1(\Omega)$)}.
\label{prop1}
Assume that the matrix $A$ and the function $F$ satisfy \eqref{eq0.0}, \eqref{car} and \eqref{eq0.1}. Then for every $u$ solution to problem \eqref{eqprima} in the sense of Definition~\ref{sol}, one has  
\begin{align}
\label{num3}
\|G_k(u)\|_{H_0^1(\Omega)}=\|DG_k(u)\|_{(L^2(\Omega))^N}\leq \frac{C_S}{\alpha} \frac{ \|h\|_{L^r(\Omega)}}{\Gamma(k)}\,\, \forall k>0,
\end{align}
where $C_S$ is the (generalized) Sobolev's constant defined in \eqref{2334}.
\end{proposition}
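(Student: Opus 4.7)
The formal argument would use $G_k(u)\in H_0^1(\Omega)$ as test function in the PDE $-\mathrm{div}\,A(x)Du=F(x,u)$ (established as a distributional identity by Proposition~\ref{prop310}). Since $DG_k(u)=\mathbf{1}_{\{u>k\}}Du$, the coercivity of $A$ gives
\[
\alpha\,\|DG_k(u)\|_{(L^2(\Omega))^N}^{\,2}\;\le\;\int_\Omega A(x)\,DG_k(u)\cdot DG_k(u)\,dx\;\stackrel{\text{formal}}{=}\;\int_\Omega F(x,u)\,G_k(u)\,dx,
\]
and since $G_k(u)$ is supported in $\{u>k\}$, the monotonicity of $\Gamma$ (Remark~\ref{primerarem} (i)) gives $F(x,u)\le h(x)/\Gamma(u)\le h(x)/\Gamma(k)$ there, so H\"older followed by the Sobolev embedding $H_0^1(\Omega)\hookrightarrow L^{2^\ast}(\Omega)$ (with $r'=2^\ast$, cf.\ \eqref{2333}, \eqref{212bis}, \eqref{2334}) yields
\[
\int_\Omega F(x,u)\,G_k(u)\,dx \;\le\; \frac{\|h\|_{L^r(\Omega)}}{\Gamma(k)}\,\|G_k(u)\|_{L^{2^\ast}(\Omega)} \;\le\; \frac{C_S\,\|h\|_{L^r(\Omega)}}{\Gamma(k)}\,\|DG_k(u)\|_{(L^2(\Omega))^N}.
\]
Dividing through by $\|DG_k(u)\|_{(L^2(\Omega))^N}$ yields \eqref{num3}.

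To implement this within Definition~\ref{sol}, note that $G_k(u)$ itself is in general not in $\mathcal V(\Omega)$: it may fail to be bounded, and need not admit the structural decomposition \eqref{condv}. I would therefore take as test function
\[
v_{j,m}\;=\;\psi_j\,T_m(G_k(u)),\qquad \psi_j\in\mathcal D(\Omega),\;\;0\le\psi_j\le 1,\;\;\psi_j\uparrow 1,\;\;m>0,
\]
which is in $H_0^1(\Omega)\cap L^\infty(\Omega)$ with compact support and hence belongs to $\mathcal V(\Omega)$ by Remark~\ref{examples} (iii) with $\hat\varphi=\overline\phi_j\in\mathcal D(\Omega)$ chosen equal to $1$ on $\mathrm{supp}\,\psi_j$, $\hat g={}^{t\!}A(x)\,Dv_{j,m}$, and $\hat f=0$. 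Plugging $v=v_{j,m}$ into (\ref{sol2} ii) and using that $D\overline\phi_j=0$ on the support of $Dv_{j,m}$, the ``formal duality'' term collapses to $\int {}^{t\!}A(x)Dv_{j,m}\cdot DT_k(u)$, so combined with the $DG_k(u)$ term (and the pointwise identity $DG_k(u)+DT_k(u)=Du$ a.e.) the formulation reduces to
\[
\int_\Omega A(x)\,Du\cdot Dv_{j,m}\,dx\;=\;\int_\Omega F(x,u)\,v_{j,m}\,dx.
\]
Expanding $Dv_{j,m}$ and exploiting $T_m(G_k(u))\,DG_k(u)=D\Psi_m(G_k(u))$, where $\Psi_m(s)=\int_0^s T_m(t)\,dt$, together with the pointwise identity $A(x)Du\cdot DT_m(G_k(u))=\mathbf{1}_{\{G_k(u)<m\}}A(x)DG_k(u)\cdot DG_k(u)$ a.e.\ (using $DG_k(u)\cdot DT_k(u)=0$), one lets $j\to\infty$ first and $m\to\infty$ second, applying monotone and dominated convergence to recover
\[
\alpha\,\|DG_k(u)\|_{(L^2(\Omega))^N}^{\,2}\;\le\;\int_\Omega F(x,u)\,G_k(u)\,dx,
\]
which combined with the first paragraph's chain of inequalities closes the proof.

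The main technical obstacle is showing that the cutoff cross-term $\int_\Omega A(x)\,D\Psi_m(G_k(u))\cdot D\psi_j\,dx$ vanishes as $j\to\infty$. This is precisely the step where the homogeneous boundary trace encoded in $\Psi_m(G_k(u))\in H_0^1(\Omega)$ must be quantitatively invoked: approximating $\Psi_m(G_k(u))$ in $H_0^1$-norm by a sequence $\chi_\ell\in\mathcal D(\Omega)$, the integral $\int A(x)D\chi_\ell\cdot D\psi_j\,dx$ vanishes for all $j$ large enough that $\psi_j\equiv 1$ on $\mathrm{supp}\,\chi_\ell$, so a diagonal $\ell$-$j$ extraction closes the argument provided the cutoff sequence $\{\psi_j\}$ is chosen with $\|D\psi_j\|_{(L^2(\Omega))^N}$ kept under control. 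This density argument is the delicate point that justifies the otherwise purely formal test with $G_k(u)$; once it is in place, the formal chain of H\"older and Sobolev inequalities delivers \eqref{num3}.
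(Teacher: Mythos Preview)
Your formal argument in the first paragraph is correct and coincides with the paper's Remark~\ref{Test}. The rigorous implementation, however, contains a genuine gap at precisely the point you flag as ``the main technical obstacle''.

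The proviso that $\|D\psi_j\|_{(L^2(\Omega))^N}$ be kept bounded \emph{cannot} be satisfied. Indeed, if $\psi_j\in\mathcal D(\Omega)\subset H_0^1(\Omega)$ with $0\le\psi_j\le 1$, $\psi_j\uparrow 1$ pointwise, and $\|D\psi_j\|_{(L^2(\Omega))^N}$ bounded, then $\psi_j$ is bounded in $H_0^1(\Omega)$; a subsequence converges weakly in $H_0^1(\Omega)$ to some limit, which by dominated convergence in $L^2(\Omega)$ must equal the constant $1$. But $1\notin H_0^1(\Omega)$ when $\Omega$ is bounded (Poincar\'e's inequality), a contradiction. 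Without this bound your diagonal $\ell$--$j$ argument does not close: the remainder $\int_\Omega A\,D(\Psi_m(G_k(u))-\chi_\ell)\cdot D\psi_j$ is controlled only by $\|D(\Psi_m(G_k(u))-\chi_\ell)\|_{L^2}\,\|D\psi_j\|_{L^2}$, and the second factor blows up.

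The paper avoids spatial cutoffs altogether. Observe that your $T_m(G_k(u))$ is exactly the function the paper calls $S_{k,k+m}(u)$ (see \eqref{defs}). The key point you are missing is that $S_{k,n}(u)$ \emph{already} lies in $\mathcal V(\Omega)$ without multiplication by any $\psi_j\in\mathcal D(\Omega)$: since $DS_{k,n}(u)$ is supported in $\{u\ge k\}$, one has
\[
-\mathrm{div}\,{}^{t\!}A(x)\,DS_{k,n}(u)\;=\;\psi_k(u)\,\bigl(-\mathrm{div}\,{}^{t\!}A(x)\,DS_{k,n}(u)\bigr)\quad\text{in }\mathcal D'(\Omega),
\]
where $\psi_k$ is any $C^1$ function with $\psi_k(s)=0$ for $s\le k/2$ and $\psi_k(s)=1$ for $s\ge k$. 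One checks (using $0\le\psi_k(u)\le\frac{4}{k}G_{k/4}(u)$ and Lemma~\ref{lem0}) that $\psi_k(u)\in H_0^1(\Omega)\cap L^\infty(\Omega)$, so this gives the decomposition \eqref{condv} with $\hat\varphi=\psi_k(u)$, $\hat g={}^{t\!}A\,DS_{k,n}(u)$, $\hat f=0$. Testing (\ref{sol2}~{\it ii}) with $v=S_{k,n}(u)$, the $\langle\langle\cdot,\cdot\rangle\rangle_\Omega$ term becomes $\int{}^{t\!}A\,DS_{k,n}(u)\cdot D(\psi_k(u)T_k(u))$, which vanishes by disjoint supports of the gradients. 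One obtains directly $\int_\Omega A\,DG_k(u)\cdot DS_{k,n}(u)=\int_\Omega F(x,u)\,S_{k,n}(u)$, and letting $n\to\infty$ yields \eqref{condfc3} and hence \eqref{num3}. The moral is that in $\mathcal V(\Omega)$ the localizing factor $\hat\varphi$ need not be a spatial cutoff; taking it to be a function of $u$ itself sidesteps the boundary issue entirely.
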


\begin{remark}
\label{rem52u}
Since $\Omega$ is bounded, using Poincar\'e's inequality 
\begin{equation}
\label{53bis}
\|y\|_{L^2(\Omega)}\leq C_P(\Omega)\|Dy\|_{(L^2(\Omega))^N}\quad \forall y\in H_0^1(\Omega),
\end{equation}
and writing $u=T_k(u)+G_k(u)$, one easily deduces from \eqref{num3} that every solution $u$ to problem \eqref{eqprima} in the sense of Definition~\ref{sol} satisfies the following a priori estimate in $L^2(\Omega)$
\begin{align}
\label{53ter}
\|u\|_{L^2(\Omega)}\dys\leq k|\Omega|^{\frac 12}+C_P(\Omega)\frac{C_S}{\alpha} \frac{ \|h\|_{L^r(\Omega)}}{\Gamma(k)}\,\, \forall k>0,
\end{align}
which, taking $k=k_0$ for some $k_0$ fixed or minimizing the right-hand side in $k,$ provides an a priori estimate of $\dys\|u \|_{L^2(\Omega)}$ which does not depend on $k$; unfortunately minimizing in $k$ does not give an explicit constant for a general function $\Gamma$. 
\qed
\end{remark}

\begin{remark}{\bf (Formal proof of Proposition~\ref{prop1}).}
\label{Test}
Taking formally $G_k(u)$ as test function in \eqref{eqprima} one obtains 
\begin{equation}
                              \label{condfc3}
\into A(x) DG_k(u) D G_k(u)=\into F(x,u)G_k(u) \,\,  \forall k>0.
\end{equation}

 Estimate \eqref{num3} then follows from the growth condition (\ref{eq0.1} {\it iii}) and from the facts that $G_k(u)=0$ on the set $\{u\leq k\}$, and that $\Gamma(s)$ is nondecreasing, so that 
$$
0\leq F(x,u)G_k(u)\leq  \frac{h(x)}{\Gamma(u)}G_k(u)\leq  \frac{h(x)}{\Gamma(k)}G_k(u)\,\mbox{ a.e. } x\in\Omega,
$$
and finally from the (generalized) Sobolev's inequality \eqref{2334}.

This formal computation will be made mathematically correct in the proof below.
\qed
\end{remark}

\begin{proof}[{\bf Proof of Proposition~\ref{prop1}.}]
$\mbox{}$

Define for every $k$ and $n$ with $0<k<n$ the function $S_{k,n}$ as
\begin{equation}
\label{defs}
S_{k,n}(s)=\begin{cases}
 0 & \mbox{if } 0\leq s\leq k, \\
 s-k & \mbox{if }  k\leq s\leq n,\\
n-k & \mbox{if } n\leq s.
\end{cases}
\end{equation}
\smallskip
\noindent{{{\bf First step}}}. In this step we will prove that 
\begin{equation}
\label{sev2}
S_{k,n}(u)\in \mathcal{V}(\Omega).
\end{equation}

Observe that  for every $n>k$ we have
$$0\leq S_{k,n}(u)\leq G_k(u) \mbox{ and } |D S_{k,n}(u)|=\chi_{_{\{k\leq u\leq n\}}} |Du|\leq |D G_k(u)|.$$ 

\noindent By (\ref{sol1} {\it iii}) and Lemma~\ref{lem0} of Appendix~\ref{appendixa} below this implies that
\begin{equation}
\label{sev}
S_{k,n}(u)\in H_0^1(\Omega)\cap L^\infty(\Omega).
\end{equation}

Let us prove that \eqref{sev2} holds true.

Let  $\psi_k:\mathbb{R}^+\rightarrow \mathbb{R}$ be a $C^1$ nondecreasing function such that 
$$
\psi_k(s)=0 \mbox{ for } 0\leq s\leq \frac k2 \mbox{ and } \psi_k(s)=1 \mbox{ for } s\geq k.
$$ 
Since $u\in H^1_{\mbox{\tiny loc}}(\Omega)$ one has
\begin{eqnarray*}
D\psi_k(u)=\psi_k'(u)Du=\psi_k'(u)DG_{\frac k2}(u) \, \mbox{ in } \mathcal{D}'(\Omega),
\end{eqnarray*}
from which using (\ref{sol1} {\it iii}) we deduce  that $D\psi_k(u)\in (L^2(\Omega))^N$. Therefore $\psi_k(u)\in H^1(\Omega)\cap L^\infty(\Omega)$, and then, using again (\ref{sol1} {\it iii}), the inequality
$$
0\leq \psi_k(u)\leq \frac{4}{k}G_{\frac k4}(u),
$$
(which results from $\frac{4}{k}G_{\frac k4}(s)\geq 1$ when $s\geq \frac k2$)  
and Lemma~\ref{lem0} below, we obtain
\begin{equation}
                            \label{num5}
                             \psi_k(u)\in H^1_0(\Omega)\cap L^\infty(\Omega).
\end{equation}
On the other hand, in view of \eqref{sev} one has  $-div\, {}^t\!A(x)D S_{k,n}(u)\in H^{-1}(\Omega)$, and one easily proves that 
\begin{equation}
\label{num7}
 -div\, {}^t\!A(x)DS_{k,n}(u)= \psi_k(u) (-div\, {}^t\!A(x)DS_{k,n}(u))\quad \mbox{in } \mathcal{D}'(\Omega),
\end{equation}
which implies \eqref{sev2} with $\hat{\varphi}=\psi_k(u)$, $\hat{g}= {}^t\!A(x)\, DS_{k,n}(u)$ and $\hat{f}=0$.

\smallskip
\noindent{{{\bf Second step}}}. Since $S_{k,n}(u)\in\mathcal{V}(\Omega)$ by \eqref{sev2} and since $S_{k,n}(u)\geq 0$, we can use $S_{k,n}(u)$ as test function in  (\ref{sol2} {\it ii}). We get, using \eqref{num7},
\begin{align*}
\begin{cases}
\dys\into  {}^t\!A(x) D S_{k,n}(u)D G_k(u)+\into  {}^t\!A(x) D S_{k,n}(u) D(\psi_k(u)T_k(u))=\\
\dys =\langle-div \, {}^t\!A(x)DS_{k,n}(u),G_k(u)\rangle_{H^{-1}(\Omega),H_0^1(\Omega)}+\\\dys+\langle\langle-div \, {}^t\!A(x)DS_{k,n}(u),T_k(u)\rangle\rangle_{\Omega}=\\
\dys=\into F(x,u) S_{k,n}(u),
\end{cases}
\end{align*}
where the second term of the left-hand side is zero since 
$$
D(\psi_k(u)T_k(u))=\psi_k(u)D T_k(u)+T_k(u)D\psi_k(u) \mbox{ in } \mathcal{D}'(\Omega),
$$
and since $D S_{k,n}(u)=0$ on $\{u\leq k\}$, while $DT_k(u)=0$ and $D\psi_k(u)=0$ on $\{u\geq k\}$. This gives
\begin{equation}
\label{sev3}
\into A(x)DG_k(u)DS_{k,n}(u)=\into F(x,u)S_{k,n}(u).
\end{equation}

\smallskip
\noindent{{{\bf Third step}}}.  Since $S_{k,n}(s)=0$ for $s\leq k$, one has, using the growth condition (\ref{eq0.1} {\it iii}) and the (generalized) Sobolev's inequality \eqref{2334} 
\begin{align}
\label{512bis}
\begin{cases}\vspace{0.1cm}
\dys \into F(x,u) S_{k,n}(u)\leq \into \frac{h(x)}{\Gamma(u)}\chi_{_{\{u>k\}}}S_{k,n}(u) \leq \into \frac{h(x)}{\Gamma(k)}S_{k,n}(u)\leq\\
\dys\leq \frac{\|h\|_{L^r((\Omega)}}{\Gamma(k)}\|S_{k,n}(u)\|_{L^{2^*}(\Omega)}
\dys \leq \frac{\|h\|_{L^r((\Omega)}}{\Gamma(k)} C_S\|D S_{k,n}(u)\|_{(L^2(\Omega))^N}\,\\ \forall k>0,\forall n>k.
\end{cases}
\end{align}

With the coercivity \eqref{eq0.0} of the matrix $A$, \eqref{sev3} and \eqref{512bis} imply that 
\begin{equation}
\label{512ter}
\alpha \|D S_{k,n}(u) \|_{(L^2(\Omega))^N}\leq C_S  \frac{\|h\|_{L^r((\Omega)}}{\Gamma(k)} \, \forall k>0,\forall n>k.
\end{equation}

Therefore $S_{k,n}(u)$ is bounded in $H_0^1(\Omega)$ for $k>0$ fixed independently of $n>k$, and the left-hand side (and therefore the right-hand side)  of \eqref{sev3} is bounded independently of $n>k$. Since
\begin{equation}
\label{512quato}
S_{k,n}(u) \rightharpoonup G_k(u) \mbox{ in } H_0^1(\Omega) \mbox{ weakly and a.e. } x\in\Omega \mbox{ as } n\to+\infty,
\end{equation}
applying Fatou's Lemma to the right-hand side of \eqref{sev3} one deduces that 
\begin{equation}
\label{50}
\into F(x,u)G_k(u)<+\infty\,\, \forall k>0.
\end{equation}

Then using \eqref{512quato}  in the left-hand side and Lebesgue's dominated convergence in the right-hand side of \eqref{sev3}, one obtains \eqref{condfc3}.

Estimate \eqref{num3} follows either from \eqref{512ter} and \eqref{512quato} or from \eqref{condfc3}.
\end{proof}
\smallskip

\noindent{ }{\bf 5.2. A priori estimate of $\varphi DT_k(u)$ in $(L^2(\Omega))^N$ for $\varphi \in H_0^1(\Omega)\cap L^\infty(\Omega)$}
\label{sub52}
\begin{proposition}
\label{prop2}
Assume that the matrix $A$ and the function $F$ satisfy \eqref{eq0.0}, \eqref{car} and \eqref{eq0.1}. Then for every $u$ solution to problem \eqref{eqprima} in the sense of Definition~\ref{sol} one has
\begin{align}
\label{619bis}
\begin{cases}
\dys\|\varphi DT_k(u)\|^2_{(L^2(\Omega))^N}\dys\leq \\\dys\leq\frac{32 k^2} {\alpha^2} \|A\|_{(L^\infty(\Omega))^{N\times N}}^2\|D\varphi\|_{(L^2(\Omega))^N}^2+\frac{C_S^2}{\alpha^2}\frac{\|h\|^2_{L^{r}(\Omega)}}{\Gamma(k)^2}\|\varphi\|_{L^\infty(\Omega)}^2\\
\dys \forall k>0,\,\, \forall\varphi\in H_0^1(\Omega)\cap L^\infty(\Omega),
\end{cases}
\end{align}
where $C_S$ is the (generalized) Sobolev's constant  defined in \eqref{2334}.
\end{proposition}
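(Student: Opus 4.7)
The plan is to test \eqref{sol2} with $v_1=\varphi^2 T_k(u)$ and $v_2=\varphi^2$. Both are nonnegative; $\varphi^2\in\mathcal V(\Omega)$ by Remark~\ref{examples}~\textit{ii)}, and since $\varphi T_k(u)\in H^1_0(\Omega)\cap L^\infty(\Omega)$ by (\ref{sol1}~\textit{iv}), the product $v_1=\varphi\cdot(\varphi T_k(u))$ also lies in $\mathcal V(\Omega)$ by Remark~\ref{examples}~\textit{i)}. Unwinding (\ref{sol2}~\textit{ii}) for these two test functions, in the spirit of the proof of Proposition~\ref{prop1}, yields the rigorous versions of the formal identities
$$\int_\Omega A(x)Du \cdot Dv_i=\int_\Omega F(x,u)v_i,\quad i=1,2.$$

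Starting from $v_1$, expanding $D(\varphi^2 T_k(u))=\varphi^2 DT_k(u)+2\varphi T_k(u)D\varphi$, using the pointwise identity $A(x)Du\cdot DT_k(u)=A(x)DT_k(u)\cdot DT_k(u)$ (valid because $DT_k(u)$ and $DG_k(u)$ have disjoint supports), and the coercivity \eqref{eq0.0}, I obtain
$$\alpha\|\varphi DT_k(u)\|^2_{(L^2(\Omega))^N}+2\int_\Omega\varphi T_k(u)A(x)Du\cdot D\varphi\leq\int_\Omega F(x,u)\varphi^2 T_k(u).$$
I then bound the right-hand side using $0\leq T_k(u)\leq k$ by $k\int_\Omega F(x,u)\varphi^2$, substitute the second identity $\int_\Omega F(x,u)\varphi^2=2\int_\Omega\varphi A(x)Du\cdot D\varphi$ to eliminate $F$, and estimate the remaining lower-order term on the left by $\left|2\int\varphi T_k(u)A(x)Du\cdot D\varphi\right|\leq 2k\|A\|_{(L^\infty(\Omega))^{N\times N}}\int\varphi|Du||D\varphi|$ (once more using $T_k(u)\leq k$). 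The inequality collapses to
$$\alpha\|\varphi DT_k(u)\|^2\leq 4k\|A\|_{(L^\infty(\Omega))^{N\times N}}\int_\Omega\varphi|Du||D\varphi|.$$

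Splitting $Du=DT_k(u)+DG_k(u)$, applying Cauchy--Schwarz, and using Proposition~\ref{prop1} to control $\|DG_k(u)\|_{(L^2(\Omega))^N}\leq C_S\|h\|_{L^r(\Omega)}/(\alpha\Gamma(k))$, two applications of Young's inequality complete the argument: the first, with weight $\alpha/2$, absorbs $\|\varphi DT_k(u)\|$ into the left-hand side and produces one $\frac{16 k^2\|A\|^2}{\alpha^2}\|D\varphi\|^2$ contribution; the second, applied in the form $8PQ\leq 16P^2+Q^2$ to the cross-term with $P=k\|A\|\|D\varphi\|$ and $Q=\|\varphi\|_\infty C_S\|h\|/\Gamma(k)$, produces the second $\frac{16 k^2\|A\|^2}{\alpha^2}\|D\varphi\|^2$ contribution together with the $\frac{C_S^2\|h\|^2\|\varphi\|_\infty^2}{\alpha^2\Gamma(k)^2}$ term, reproducing \eqref{619bis} with the stated constant $32=16+16$.

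The main obstacle is not the algebra but the rigorous translation of $\int A(x)Du\cdot Dv_i=\int F(x,u)v_i$ through (\ref{sol2}~\textit{ii}). For $v_2=\varphi^2$ this follows directly from the explicit decomposition written in Remark~\ref{examples}~\textit{ii)}. For $v_1=\varphi\cdot(\varphi T_k(u))$, one must use the more intricate decomposition of Remark~\ref{examples}~\textit{i)} (with $\varphi_1=\varphi$, $\varphi_2=\varphi T_k(u)$), carefully match the pairings $\langle\cdot,G_k(u)\rangle_{H^{-1}(\Omega),H_0^1(\Omega)}$ and $\langle\langle\cdot,T_k(u)\rangle\rangle_\Omega$ entering (\ref{sol2}~\textit{ii}), and use $u=T_k(u)+G_k(u)$ together with a cutoff approximation of the type $\psi_k(u)$ introduced in the proof of Proposition~\ref{prop1} to reassemble the local integrals into the desired global identity despite $u$ only belonging to $H^1_{\mbox{\tiny loc}}(\Omega)$.
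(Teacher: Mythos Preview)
Your proposal is correct and follows essentially the same route as the paper: test with $v_1=\varphi\cdot(\varphi T_k(u))$ and $v_2=\varphi^2$ (both in $\mathcal V(\Omega)$ via Remark~\ref{examples}), derive the rigorous version of \eqref{36} and \eqref{555}, combine them into \eqref{38bis}, and finish with two Young inequalities and Proposition~\ref{prop1}; your tracking of the constants is accurate. One small correction to your sketch of the rigorous part: no cutoff of the type $\psi_k(u)$ is needed here---the paper simply plugs the decomposition of $-div\,{}^t\!A D(\varphi\cdot\varphi T_k(u))$ from Remark~\ref{examples}~{\it i)} into (\ref{sol2}~{\it ii}) (with the free parameter there, call it $j$, set equal to $k$), expands the resulting integrands in $L^1_{\mbox{\tiny loc}}(\Omega)$, and observes that all terms are globally integrable because $DG_k(u)$, $\varphi DT_k(u)$ and $D\varphi$ lie in $(L^2(\Omega))^N$.
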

\begin{remark}
\label{54bis}
From the a priori estimate \eqref{619bis} and from $D(\varphi T_k(u))=\varphi DT_k(u)+T_k(u)D\varphi$, one deduces that every solution $u$ to problem \eqref{eqprima} in the sense of Definition~\ref{sol} satisfies the following a priori estimate of  $\varphi T_k(u)$ in $H_0^1(\Omega)$
\begin{align}
\label{516bis}
\begin{cases}
\dys\|\varphi T_k(u)\|^2_{H_0^1(\Omega)}=\|D(\varphi T_k(u))\|^2_{(L^2(\Omega))^N}\leq\\
\dys \leq \left(\frac{64 k^2} {\alpha^2} \|A\|_{(L^\infty(\Omega))^{N\times N}}^2+2k^2\right) \|D\varphi\|^2_{(L^2(\Omega))^N} 
\dys + 2\frac{C_S^2}{\alpha^2}\frac{\|h\|^2_{L^{r}(\Omega)}}{\Gamma(k)^2}\|\varphi\|_{L^\infty(\Omega)}^2\\
\dys \forall k>0,\,\, \forall\varphi\in H_0^1(\Omega)\cap L^\infty(\Omega).
\end{cases}
\end{align}
\qed
\end{remark}

\begin{remark}
\label{99}
Using estimates \eqref{prop1} of $DG_k(u)$ and \eqref{619bis} of $\varphi DT_k(u)$ in $(L^2(\Omega))^N,$  as well as 
$\varphi Du=\varphi DT_k(u)+\varphi DG_k(u)$, and $|DT_k(u)| |DG_k(u )|=0$ almost everywhere in $\Omega$,
one deduces that every solution $u$ to problem \eqref{eqprima} in the sense of Definition~\ref{sol} satisfies the following a priori estimate of  $\varphi Du$ in $(L^2(\Omega))^N$
\begin{align}
\label{111}
\begin{cases}
\dys\|\varphi Du \|^2_{(L^2(\Omega))^N}\dys=\dys\|\varphi DT_k(u) \|^2_{(L^2(\Omega))^N}+\dys\|\varphi DG_k(u) \|^2_{(L^2(\Omega))^N}\leq \\\leq\dys \frac{32 k^2} {\alpha^2} \|A\|_{(L^\infty(\Omega))^{N\times N}}^2\|D\varphi\|_{(L^2(\Omega))^N}^2+2\frac{C_S^2}{\alpha^2}\frac{\|h\|^2_{L^{r}(\Omega)}}{\Gamma(k)^2}\|\varphi\|_{L^\infty(\Omega)}^2\\
\dys \forall k>0,\,\, \forall\varphi\in H_0^1(\Omega)\cap L^\infty(\Omega),
\end{cases}
\end{align}  
which, taking $k=k_0$ for some $k_0$ fixed or minimizing the right-hand side in $k,$ provides an a priori estimate of $\dys\|\varphi Du \|^2_{(L^2(\Omega))^N}$ which does not depend on $k$; unfortunately minimizing in $k$ does not give an explicit constant for a general function $\Gamma$. 

\qed
\end{remark}
\begin{remark}
\label{55bis}
 Since for every $\phi\in \mathcal{D}(\Omega)$  one has $
D(\phi u)=\phi Du+(T_k(u)+G_k(u))D\phi$, 
which implies that $
|D(\phi u)|\leq |\phi Du|+k|D\phi|+\|D\phi\|_{(L^\infty(\Omega))^N}|G_k(u)|
$, and then using the inequality 
$
(a+b+c)^2\leq 3(a^2+b^2+c^2)
$
and the a priori estimates \eqref{111} and \eqref{num3} together with Poincar\'e's inequality \eqref{53bis}, one deduces that every solution $u$ to problem \eqref{eqprima} in the sense of Definition~\ref{sol} satisfies the following a priori estimate of $u$ in $H^1_{\mbox{\tiny loc}}(\Omega)$
\begin{align}
\label{512bis1}
\begin{cases}
\|\phi u\|^2_{H_0^1(\Omega)}=\|D(\phi u)\|^2_{(L^2(\Omega))^N}\leq\\
\leq \dys3\left(\frac{32 k^2} {\alpha^2} \|A\|_{(L^\infty(\Omega))^{N\times N}}^2\|D\phi\|_{(L^2(\Omega))^N}^2+2\frac{C_S^2}{\alpha^2}\frac{\|h\|^2_{L^{r}(\Omega)}}{\Gamma(k)^2}\|\phi\|_{L^\infty(\Omega)}^2+\right.\\
\dys \left. + k^2\|D\phi\|^2_{L^2(\Omega)}+C_P^2(\Omega)\frac{C_S^2}{\alpha^2}\frac{\|h\|^2_{L^r(\Omega)}}{\Gamma(k)^2}\|D\phi\|^2_{(L^\infty(\Omega))^N}\right)\\
\forall k>0,\,\,\,\dys \forall \phi\in \mathcal{D}(\Omega),
\end{cases}
\end{align}
which, taking $k=k_0$ for some $k_0$ fixed or minimizing the right-hand side in $k,$ provides an a priori estimate of $\dys\|\phi u \|^2_{H_0^1(\Omega)}$ for every fixed $\phi\in \mathcal{D}(\Omega)$, i.e. an a priori estimate of $\|u\|^2_{H^1_{\mbox{\tiny loc}}(\Omega)}$, which does not depend on $k$.
\qed
\end{remark}

\begin{remark}{\bf (Formal proof of Proposition~\ref{prop2}).}
\label{57bis}
The computation that  we will perform in the present Remark is formal. We will make it mathematically correct in the proof below.

The idea of the proof of Proposition~\ref{prop2} is to formally use $\varphi^2 T_k(u)$ as test function in \eqref{eqprima}, where $\varphi\in H_0^1(\Omega)\cap L^\infty(\Omega)$. We formally get 
\begin{equation}
\label{36}
\into A(x)DuDT_k(u)\,\varphi^2 + 2\into A(x)Du D\varphi\, \varphi T_k(u)=\into F(x,u)\varphi^2 T_k(u),
\end{equation}
in the second term of which we write $Du=DT_k(u)+DG_k(u)$. Using the coercivity \eqref{eq0.0} of the matrix $A$ we have 
\begin{align}
\label{37}
\begin{cases}\vspace{0.1cm}
\dys\alpha \into \varphi^2|DT_k(u)|^2\leq\\ 
\dys\leq 2\left|\into A(x)DT_k(u) D\varphi\,\varphi T_k(u)\right|
+2 \dys \left|\into A(x)DG_k(u) D\varphi\,\varphi T_k(u)\right|+\\
\dys+ \into F(x,u)\varphi^2 T_k(u) \quad \forall \varphi \in H_0^1(\Omega)\cap L^\infty(\Omega).
\end{cases}
\end{align}

In this inequality we will use the estimate
\begin{align}\label{67}
\begin{cases}
\dys 2\left|\into A(x)DT_k(u) D\varphi\,\varphi T_k(u)\right|+ 2\left|\into A(x)DG_k(u) D\varphi\,\varphi T_k(u)\right|\leq\\ \dys\leq 2k \|A\|_{(L^\infty(\Omega))^{N\times N}}\|D\varphi\|_{(L^2(\Omega))^N}\|\varphi DT_k(u)\|_{(L^2(\Omega))^N}+\\
\dys+ 2k \|A\|_{(L^\infty(\Omega))^{N\times N}}\|D\varphi\|_{(L^2(\Omega))^N}\|\varphi\|_{L^\infty(\Omega)}\|DG_k(u)\|_{(L^2(\Omega))^N}.
\end{cases}
\end{align}

On the other hand, since $0\leq T_k(u)\leq k$, and using formally $\varphi^2$ as test function in \eqref{eqprima}, we have
\begin{align}\label{37bis}
\begin{cases}\vspace{0.1cm}
0\leq \dys\into F(x,u)\varphi^2T_k(u)\leq k\dys\into F(x,u) \varphi^2=
\\=k \dys\into A(x)Du D\varphi^2
 =2k\dys\into A(x) Du D\varphi \,\varphi=\\=\dys 2k\into A(x)DT_k(u) D\varphi\,\varphi +
2k \dys \into A(x)DG_k(u) D\varphi\,\varphi \leq \\
\dys\leq 2k \|A\|_{(L^\infty(\Omega))^{N\times N}}\|D\varphi\|_{(L^2(\Omega))^N}\|\varphi DT_k(u)\|_{(L^2(\Omega))^N}+\\
+ 2k \|A\|_{(L^\infty(\Omega))^{N\times N}}\|D\varphi\|_{(L^2(\Omega))^N}\|\varphi\|_{L^\infty(\Omega)}\|DG_k(u)\|_{(L^2(\Omega))^N}.
\end{cases}
\end{align}

Collecting together \eqref{37}, \eqref{67} and \eqref{37bis} we obtain
\begin{align}\label{38}
\begin{cases}
\dys\alpha \into \varphi^2|DT_k(u)|^2\leq\\
\leq\dys 4 k\|A\|_{(L^\infty(\Omega))^{N\times N}}\|D\varphi\|_{(L^2(\Omega))^N}\|\varphi DT_k(u)\|_{(L^2(\Omega))^N}+\\
+ 4k \|A\|_{(L^\infty(\Omega))^{N\times N}}\|D\varphi\|_{(L^2(\Omega))^N}\|\varphi\|_{L^\infty(\Omega)}\|DG_k(u)\|_{(L^2(\Omega))^N}\\
\dys \forall k>0,\,\, \forall\varphi\in H_0^1(\Omega)\cap L^\infty(\Omega).
\end{cases}
\end{align}

Using Young's inequality in the first term of the right-hand side of \eqref{38} and the estimate \eqref{prop1} of $\|DG_k(u)\|_{(L^2(\Omega))^N}$ in the second term provides an estimate of $\|\varphi DT_k(u)\|^2_{(L^2(\Omega))^N}$.
 \qed
\end{remark}

\begin{proof}[{\bf Proof of Proposition~\ref{prop2}}]
$\mbox{}$

\smallskip

\noindent{{{\bf First step}}}.
By (\ref{sol1} {\it iv}) we have $\varphi T_k(u)\in H_0^1(\Omega)\cap L^\infty(\Omega)$, which implies, using Remark~\ref{examples} {\it i}), that 
\begin{equation}
\label{524ter}
\varphi^2 T_k(u)=\varphi\,\varphi T_k(u)\in\mathcal{V}(\Omega),
\end{equation}
with
\begin{align*}
\begin{cases}
\dys-div\, {}^t\!A(x)D(\varphi^2T_k(u))=\\
\dys=\varphi(-div \, {}^t\!A(x)D(\varphi T_k(u)))-\, {}^t\!A(x)D(\varphi T_k(u))D\varphi \,+\\
+(\varphi T_k(u))(-div \, {}^t\!A(x)D\varphi)-\, {}^t\!A(x)D\varphi D(\varphi T_k(u)) \quad \mbox{ in } \mathcal{D}'(\Omega).
\end{cases}                            
\end{align*}                           

Since $\varphi^2 T_k(u) \in \mathcal{V}(\Omega)$ with $\varphi^2 T_k(u)\geq 0$ we can use $v=\varphi^2 T_k(u)$ as test function in (\ref{sol2} {\it ii}). Denoting by $j$ the value of $k$ which appears in (\ref{sol2} {\it ii}), we get
\begin{align}
\label{nlt}
\begin{cases}
\dys\into  {}^t\!A(x)D(\varphi^2 T_k(u))DG_j(u)\,+\\\vspace{0.1cm}
\dys+ \into  {}^t\!A(x)D(\varphi T_k(u))D(\varphi T_j(u))-\into  {}^t\!A(x)D(\varphi T_k(u))D\varphi \,T_j(u)\,+\\
\dys+ \into  {}^t\!A(x)D\varphi\,D(\varphi T_k(u) T_j(u))-\into  {}^t\!A(x)D\varphi D(\varphi  T_k(u)) \,T_j(u)=\\
\dys= \langle-div \, {}^t\!A(x)D(\varphi^2 T_k(u) ),G_j(u)\rangle_{H^{-1}(\Omega),H_0^1(\Omega)}+\\\dys+\langle\langle-div \, {}^t\!A(x)D(\varphi^2 T_k(u) ),T_j(u)\rangle\rangle_{\Omega}=\\
\dys= \into F(x,u)\varphi^2 T_k(u)\quad  \forall\varphi\in H_0^1(\Omega)\cap L^\infty(\Omega),\quad \forall j>0,
\end{cases}
\end{align}
where we observe that the fourth term of the left-hand side makes sense due to the fact that $\varphi T_k(u)\in H^1_0(\Omega)\cap L^\infty(\Omega)$ by  (\ref{sol1} {\it iv}), which implies that $(\varphi T_k(u)) T_j(u)\in H^1_0(\Omega)\cap L^\infty(\Omega)$ again by (\ref{sol1} {\it iv}).
 
Since $u\in H^1_{\mbox{\tiny loc}}(\Omega)$, we can expand in $L^1_{\mbox{\tiny loc}}(\Omega)$ the integrands of each of the $5$ terms of the left-hand side of \eqref{nlt}. One obtains $13$ terms whose integrands belong to $L^1(\Omega)$ since $DG_j(u)$, $\varphi DT_k(u)$ and $\varphi DT_j(u)$ belong to $(L^2(\Omega))^N$. A simple but tedious computation leads from \eqref{nlt} to 
\begin{align}
\label{num13}
\begin{cases}
\dys \into  A(x) DT_j(u)DT_k(u)\varphi^2+\,\dys \into  A(x) DG_j(u)DT_k(u)\ \varphi^2+\\
+2\dys\into A(x) DT_j(u)D\varphi\,\varphi T_k(u)+2\dys\into A(x) DG_j(u)D\varphi\,\varphi T_k(u)=\\
\dys=\into F(x,u)\varphi^2T_k(u) \quad
\dys\forall \varphi\in H_0^1(\Omega)\cap L^\infty(\Omega),\,\, \forall j>0.
\end{cases}
\end{align}
Taking $j=k$ gives, since $|DG_k(u)||DT_k(u)|=0$ almost everywhere in $\Omega,$ 
\begin{align}\label{num14}
\begin{cases}
\dys \into  A(x) DT_k(u)DT_k(u)\varphi^2\,+\\
\dys+2\into A(x) DT_k(u)D\varphi\,\varphi T_k(u)+2\dys\into A(x) DG_k(u)D\varphi\,\varphi T_k(u)=\\
\dys= \into F(x,u)\varphi^2T_k(u) \quad \forall \varphi\in H_0^1(\Omega)\cap L^\infty(\Omega).
\end{cases}
\end{align}
This result is nothing but \eqref{36}, which had been formally obtained in Remark~\ref{57bis} by taking $\varphi^2T_k(u)$ as test function in \eqref{eqprima}, but the proof that we just performed is  mathematically correct. From \eqref{num14} and the coercivity \eqref{eq0.0} of the matrix $A$ we deduce that 
\begin{align}
\label{524bis}
\begin{cases}
\dys\alpha \into \varphi^2|DT_k(u)|^2\leq\\
\dys \leq 2\left| \dys\into A(x)DT_k(u)D\varphi\,\varphi T_k(u)\right|+2\left| \dys\into A(x)DG_k(u)D\varphi\,\varphi T_k(u)\right|+\\
\dys + \into F(x,u)\varphi^2T_k(u) \quad \forall \varphi\in H_0^1(\Omega)\cap L^\infty(\Omega),
\end{cases}
\end{align}
which is nothing but \eqref{37} of the formal proof made in Remark~\ref{57bis}.

\smallskip
\noindent{{{\bf Second step}}}. 
 As far as the first and the second terms of the right-hand side of \eqref{524bis} are concerned, we have, as in \eqref{67}, 
\begin{align}\label{67bis},
\begin{cases}
\dys 2\left|\into A(x)DT_k(u) D\varphi\,\varphi T_k(u)\right|+2\left|\into A(x)DG_k(u) D\varphi\,\varphi T_k(u)\right|\leq\\ \dys\leq 2k \|A\|_{(L^\infty(\Omega))^{N\times N}}\|D\varphi\|_{(L^2(\Omega))^N}\|\varphi DT_k(u)\|_{(L^2(\Omega))^N}+\\
\dys +2k \|A\|_{(L^\infty(\Omega))^{N\times N}}\|D\varphi\|_{(L^2(\Omega))^N}\|\varphi\|_{L^\infty(\Omega)}\|DG_k(u)\|_{(L^2(\Omega))^N}.
\end{cases}
\end{align}

On the other hand, since for every $\varphi\in H_0^1(\Omega)\cap L^\infty(\Omega)$, $\varphi^2$ belongs to $\mathcal{V}(\Omega)$ (see Remark~\ref{examples} {\it ii})) and since  $\varphi^2\geq0$, using $\varphi^2$ as test function in (\ref{sol2} {\it ii})  gives 
\begin{align*}
\begin{cases}
\langle-div \, {}^t\!A(x)D\varphi^2 ,G_k(u)\rangle_{H^{-1}(\Omega),H_0^1(\Omega)}+\langle\langle -div\, {}^t\!A(x)D\varphi^2,T_k(u) \rangle\rangle_\Omega=\\\dys= \into F(x,u)\varphi^2,
\end{cases}
\end{align*}
which gives 
\begin{align}
\label{555}
\begin{cases}
\dys2\into  A(x)DG_k(u) D\varphi\,\varphi+\dys2\into  A(x)DT_k(u) D\varphi\,\varphi=\\
\dys=\into F(x,u)\varphi^2\quad \forall\varphi\in H_0^1(\Omega)\cap L^\infty(\Omega).
\end{cases}
\end{align}

Therefore we have, for the last term of the right-hand side of \eqref{524bis}, 
\begin{align}
\label{528bis}
\begin{cases}\vspace{0.1cm}
\dys0\leq  \into F(x,u)\varphi^2 T_k(u)\leq  k\into F(x,u)\varphi^2 =\\
\dys = 2k \into A(x)DT_k(u) D\varphi\,\varphi +2k \into A(x)DG_k(u) D\varphi\,\varphi \leq\\  \leq2k \|A\|_{(L^\infty(\Omega))^{N\times N}} \|D\varphi\|_{(L^2(\Omega))^N}\|\varphi DT_k(u)\|_{(L^2(\Omega))^N}+\\+2k \|A\|_{(L^\infty(\Omega))^{N\times N}} \|D\varphi\|_{(L^2(\Omega))^N}\|\varphi\|_{L^\infty(\Omega)}\|DG_k(u)\|_{(L^2(\Omega))^N},
\end{cases}
\end{align}
which is nothing but \eqref{37bis} of the formal proof made in Remark~\ref{57bis}.\\

\smallskip

\noindent{{{\bf Third step}}}. Collecting together \eqref{524bis}, \eqref{67bis} and \eqref{528bis} we have proved that 
\begin{align}\label{38bis}
\begin{cases}
\dys\alpha \into \varphi^2|DT_k(u)|^2\leq\\
\leq\dys 4 k\|A\|_{(L^\infty(\Omega))^{N\times N}}\|D\varphi\|_{(L^2(\Omega))^N}\|\varphi DT_k(u)\|_{(L^2(\Omega))^N}+\\
+ 4k \|A\|_{(L^\infty(\Omega))^{N\times N}}\|D\varphi\|_{(L^2(\Omega))^N}\|\varphi\|_{L^\infty(\Omega)}\|DG_k(u)\|_{(L^2(\Omega))^N}\\
\dys \forall k>0,\,\, \forall\varphi\in H_0^1(\Omega)\cap L^\infty(\Omega),
\end{cases}
\end{align}
which is nothing but \eqref{38} of the formal proof made in Remark~\ref{57bis}.

Using Young's inequality $\dys XY\leq \frac{\alpha}{2}X^2+\frac{1}{2\alpha}Y^2$ in the first term of the right-hand side of \eqref{38bis} and estimate \eqref{num3} of $\|DG_k(u)\|_{(L^2(\Omega))^N}$  in the second term gives
\begin{align}
\label{569}
\begin{cases}
\dys\frac{\alpha}{2}\|\varphi DT_k(u)\|^2_{(L^2(\Omega)^N}\leq\\\vspace{0.1cm}
\dys\leq  \frac {8k^2}{\alpha}\|A\|^2_{(L^\infty(\Omega))^{N\times N}} \|D \varphi\|^2_{(L^2(\Omega))^N}  +\\\vspace{0.1cm}
\dys + 4k\|A\|_{(L^\infty(\Omega))^{N\times N}}\|D \varphi\|_{(L^2(\Omega))^N} \|\varphi\|_{L^\infty(\Omega)} \frac{C_S}{\alpha}  \frac{\|h\|_{L^r(\Omega)}}{\Gamma(k)},
\end{cases}
\end{align}
in which we use again Young's inequality $\dys\frac{1}{\alpha}XY\leq \frac{1}{2\alpha}X^2+\frac{1}{2\alpha}Y^2$ in the last term of the right-hand side. This gives
\begin{align}
\label{570}
\begin{cases}
\dys\frac{\alpha}{2}\|\varphi DT_k(u)\|^2_{(L^2(\Omega)^N}\leq\\\vspace{0.1cm}
\dys\leq  \frac {16k^2}{\alpha}\|A\|^2_{(L^\infty(\Omega))^{N\times N}} \|D \varphi\|^2_{(L^2(\Omega))^N}  
\dys +\frac {C_S^2}{2\alpha} \frac{\|h\|^2_{L^r(\Omega)}}{\Gamma(k)^2}\|\varphi\|^2_{L^\infty(\Omega)},
\end{cases}
\end{align}
i.e. \eqref{619bis}. 

This completes the proof of Proposition~\ref{prop2}.
 \end{proof}
 
 \smallskip
\noindent{ } {\bf 5.3. Control of the integral $\dys\int_{\{u\leq \delta\}}F(x,u)v$}$\mbox{ }$
  \label{sub53}

 In this subsection we prove an a priori estimate (see \eqref{5701bis} below) which is a key point in the proofs of our results.

For $\delta>0$, we define the function $Z_\delta: s\in[0,+\infty[\rightarrow Z_\delta(s)\in[0,+\infty[$ by 
\begin{equation}
\label{num23bis}
Z_\delta(s)=\begin{cases}
1& \mbox{if } 0\leq s\leq \delta, \\
 -\frac{s}{\delta}+2 & \mbox{if }  \delta\leq s\leq 2\delta ,\\
  0 & \mbox{if } 2\delta\leq s.
\end{cases}
\end{equation}

\begin{proposition}
\label{prop3}
Assume that the matrix $A$ and the function $F$ satisfy \eqref{eq0.0}, \eqref{car} and \eqref{eq0.1}. Then for every $u$ solution to problem \eqref{eqprima} in the sense of Definition~\ref{sol} and for every $v$ such  that
\begin{equation}
\label{5701}
\begin{cases}
\displaystyle v \in \mathcal V(\Omega),\,v\geq 0,\\ \dys\mbox{with } -div \, {}^t\!A(x)Dv=\sum_{i \in I} \hat{ \varphi}_i (-div\, \hat{ g}_i)+\hat{ f} \mbox{ in } \mathcal{D}'(\Omega)\\
\mbox{where } \hat{\varphi_i}\in H_0^1(\Omega)\cap L^\infty(\Omega), \hat{g_i}\in L^2(\Omega)^N, \hat{f}\in L^1(\Omega),\end{cases}
\end{equation}
one has
\begin{align}
\label{5701bis}
\begin{cases}
\dys\forall \delta>0,\,\,\int_{\Omega}F(x,u)Z_\delta(u)v\leq\\
\dys\leq \frac 32 \left(\into \left|\sum_{i\in I} \hat{g_i} D\hat{\varphi_i}+ \hat{f}\right| \right)\delta+\sum_{i\in I} \into Z_\delta(u) \hat{g_i}Du \,\hat{\varphi_i}.
\end{cases}
\end{align}
\end{proposition}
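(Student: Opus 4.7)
The idea is to apply \eqref{sol2}~{\it ii} with test function $v_\delta := Z_\delta(u) v$ and $k = 2\delta$, after first verifying $v_\delta \in \mathcal V(\Omega)$. Formally this corresponds to multiplying the equation $-\text{div}(A(x) Du) = F(x,u)$ by the nonnegative function $Z_\delta(u) v$ and integrating by parts: setting $\xi_\delta(s) := \int_0^s Z_\delta(t)\,dt$, so that $\xi'_\delta = Z_\delta$ and $0 \leq \xi_\delta(s) \leq 3\delta/2$ for every $s \geq 0$, the chain rule $D(Z_\delta(u) v) = Z_\delta(u) Dv + v Z'_\delta(u) Du$ gives
\[
\int_\Omega F(x,u) Z_\delta(u) v = \int_\Omega Z_\delta(u) A(x) Du \cdot Dv + \int_\Omega v Z'_\delta(u) A(x) Du \cdot Du,
\]
and the second integral is nonpositive since $Z'_\delta \leq 0$, $v \geq 0$ and $A$ is coercive.

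\textbf{Showing $v_\delta \in \mathcal V(\Omega)$.} Property~(\ref{sol1}~{\it iv}) combined with Remark~\ref{r314} yields $\varphi Du \in (L^2(\Omega))^N$ for every $\varphi \in H_0^1(\Omega) \cap L^\infty(\Omega)$, hence $v_\delta \in H_0^1(\Omega) \cap L^\infty(\Omega)$. The key identity
\[
Z'_\delta(u)\, Du = -\tfrac{1}{\delta}\bigl(DG_\delta(u) - DG_{2\delta}(u)\bigr) \in (L^2(\Omega))^N,
\]
valid by~(\ref{sol1}~{\it iii}) applied with $k = \delta$ and $k = 2\delta$, permits the Leibniz-type decomposition
\[
-\text{div}\,{}^tA Dv_\delta = \sum_{i \in I}\bigl(Z_\delta(u)\hat\varphi_i\bigr)(-\text{div}\,\hat g_i) - \tfrac{v}{\delta}(-\text{div}\,{}^tA DG_\delta(u)) + \tfrac{v}{\delta}(-\text{div}\,{}^tA DG_{2\delta}(u)) + \hat f^{\mathrm{new}},
\]
with $Z_\delta(u)\hat\varphi_i, v \in H_0^1(\Omega) \cap L^\infty(\Omega)$, $\hat g_i, {}^tA DG_{j\delta}(u) \in (L^2(\Omega))^N$, and $\hat f^{\mathrm{new}} \in L^1(\Omega)$. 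Thus $v_\delta \in \mathcal V(\Omega)$.

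\textbf{Using $v_\delta$ in~\eqref{sol2}~{\it ii}.} I apply~\eqref{sol2}~{\it ii} to $v_\delta$ with $k = 2\delta$. Since $Dv_\delta \equiv 0$ on $\{u > 2\delta\}$ (both $Z_\delta$ and $Z'_\delta$ vanish there) while $DG_{2\delta}(u) \equiv 0$ on $\{u \leq 2\delta\}$, the summand $\int {}^tA Dv_\delta \cdot DG_{2\delta}(u)$ is identically zero. Expanding $\langle\langle -\text{div}\,{}^tA Dv_\delta, T_{2\delta}(u)\rangle\rangle_\Omega$ through the decomposition above and applying Leibniz (in particular $D(\hat\varphi_i \xi_\delta(u)) = \xi_\delta(u) D\hat\varphi_i + \hat\varphi_i Z_\delta(u) Du$), after routine cancellations I expect to arrive at the rigorous identity
\[
\int_\Omega F(x,u) Z_\delta(u) v + \tfrac{1}{\delta}\int_{\{\delta \leq u \leq 2\delta\}} v A(x) Du \cdot Du = \sum_{i \in I}\int_\Omega Z_\delta(u)\, \hat g_i Du\, \hat\varphi_i + \int_\Omega \xi_\delta(u) \Bigl[\sum_{i \in I}\hat g_i D\hat\varphi_i + \hat f\Bigr].
\]
The second integral on the left is nonnegative (by coercivity of $A$ and $v \geq 0$) and can be discarded; bounding $|\xi_\delta(u)| \leq 3\delta/2$ in the last integral on the right then gives \eqref{5701bis}.

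\textbf{Main obstacle.} The chief technical work lies in the bookkeeping of the decomposition of $-\text{div}\,{}^tA Dv_\delta$ and in carrying out the expansion of $\langle\langle -\text{div}\,{}^tA Dv_\delta, T_{2\delta}(u)\rangle\rangle_\Omega$ to obtain the identity above; the decisive input is~(\ref{sol1}~{\it iii}), through $\chi_{\{\delta \leq u \leq 2\delta\}} Du = DG_\delta(u) - DG_{2\delta}(u) \in (L^2(\Omega))^N$, which is exactly what makes $v_\delta$ admissible and yields a bona fide $L^1$ function in place of $\hat f^{\mathrm{new}}$.
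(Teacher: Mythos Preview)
Your proposal is correct and follows essentially the same route as the paper: both use $Z_\delta(u)v$ as test function in (\ref{sol2}~{\it ii}) with $k\geq 2\delta$, observe that the $G_k$-term vanishes, and reduce to the identity involving the primitive $\xi_\delta=Y_\delta$ of $Z_\delta$, after which the coercivity of $A$ and the bound $0\leq \xi_\delta\leq \tfrac32\delta$ finish the job. The only cosmetic difference is that the paper introduces an auxiliary function $R_\delta$ (with $Y_\delta(s)=sZ_\delta(s)+R_\delta(s)$ and $R_\delta(u)\in H_0^1(\Omega)$) to organize the ``routine cancellations'' you allude to; your direct Leibniz expansion leads to the same identity.
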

\begin{remark}
\label{510bis}
Since $Z_\delta(s)\geq \chi_{_{\{u\geq \delta\}}}(s)$ for every $s\geq 0$, estimate \eqref{5701bis} provides an estimate of the integral $\dys\int_{\{u\leq \delta\}}F(x,u)v$ as announced in the title of Subsection~5.3. 
\qed
\end{remark}

\begin{remark}{\bf (Formal proof of Proposition~\ref{prop3}).} 
\label{rem538}
Estimate \eqref{5701bis} can be formally obtained by using in \eqref{eqprima} the test function $Z_\delta(u)v$, where $Z_\delta$ is defined by \eqref{num23bis} and where $v$ satisfies \eqref{5701}. One formally obtains
\begin{equation}
\label{36p}
\into F(x,u)Z_\delta(u)v=\into A(x) Du Du Z_\delta'(u)v+\into A(x) DuDv Z_\delta(u),
\end{equation}
and one observes that, denoting by $Y_\delta$ the primitive of  the function $Z_\delta$ (see \eqref{5702} below), one has,
\begin{align*}
\begin{cases}\vspace{0.1cm}
\dys \into A(x) Du Du Z_\delta'(u)v\leq 0,\\\vspace{0.1cm}
\dys \into A(x)Du Dv Z_\delta(u)=\into A(x)DY_\delta(u)Dv
\dys = \into  \, {}^t\!A(x)Dv DY_\delta(u)=\\
\dys = \sum_{i \in I} \into  \hat{ g}_i D(\hat{\varphi}_i Y_\delta(u))+\into \hat{f} Y_\delta(u)=\\
\dys=   \into  \left(  \sum_{i \in I}\hat{ g}_i D\hat{\varphi}_i + \hat{f} \right) Y_\delta(u)+ \sum_{i \in I} \into Z_\delta(u)\hat{g}_i Du\, \hat{\varphi}_i,
\end{cases}
\end{align*}
where one finally uses $\dys0\leq Y_\delta(s)\leq \frac 32 \delta$ to formally obtain estimate \eqref{5701bis}. 

These formal computations will be made mathematically correct in the proof below.
\qed
\end{remark}

As a consequence of Proposition~\ref{prop3} we have:
\begin{proposition}
\label{prop69}
Assume that the matrix $A$ and the function $F$ satisfy \eqref{eq0.0}, \eqref{car} and \eqref{eq0.1}. Then for every $u$ solution to problem \eqref{eqprima} in the sense of Definition~\ref{sol} one has
\begin{equation}
\label{5800}
\int_{\{u=0\}}F(x,u)v=0\quad \forall v\in\mathcal{V}(\Omega),v\geq 0.
\end{equation}
\end{proposition}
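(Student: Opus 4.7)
The plan is to derive Proposition~\ref{prop69} directly from the estimate \eqref{5701bis} of Proposition~\ref{prop3} by letting $\delta \to 0^+$, combined with Fatou's lemma on the left-hand side and dominated convergence on the right-hand side. The key pointwise fact is that $Z_\delta(u(x)) \to \chi_{\{u=0\}}(x)$ as $\delta \to 0^+$: by the definition \eqref{num23bis}, $Z_\delta(0) = 1$ for every $\delta$, while for every $s > 0$ one has $Z_\delta(s) = 0$ as soon as $\delta < s/2$.

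First I would fix $v \in \mathcal{V}(\Omega)$ with $v \geq 0$ and an associated decomposition $-\mathrm{div}\,{}^t\!A(x)Dv = \sum_{i\in I} \hat{\varphi}_i(-\mathrm{div}\,\hat{g}_i) + \hat{f}$ as in \eqref{5701}, and apply Proposition~\ref{prop3}: for every $\delta > 0$,
\begin{equation*}
\int_\Omega F(x,u)\,Z_\delta(u)\,v \;\leq\; \frac{3}{2}\,\delta\int_\Omega\Bigl|\sum_{i\in I}\hat{g}_i D\hat{\varphi}_i + \hat{f}\Bigr| \;+\; \sum_{i\in I}\int_\Omega Z_\delta(u)\,\hat{g}_i\,Du\,\hat{\varphi}_i.
\end{equation*}
The first term on the right clearly tends to $0$ as $\delta \to 0^+$, since the integral is a finite constant depending only on $v$ (the integrand lies in $L^1(\Omega)$ because $\hat{g}_i, D\hat{\varphi}_i \in (L^2(\Omega))^N$ and $\hat{f} \in L^1(\Omega)$).

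Next I would handle each term $\int_\Omega Z_\delta(u)\,\hat{g}_i\,Du\,\hat{\varphi}_i$ by dominated convergence. By Remark~\ref{r314} (equivalence \eqref{3111}), any solution $u$ to \eqref{eqprima} satisfies $\hat{\varphi}_i Du \in (L^2(\Omega))^N$ since $\hat{\varphi}_i \in H^1_0(\Omega) \cap L^\infty(\Omega)$; combined with $\hat{g}_i \in (L^2(\Omega))^N$ this gives $\hat{g}_i\,Du\,\hat{\varphi}_i \in L^1(\Omega)$, which dominates the integrand since $0 \leq Z_\delta \leq 1$. The pointwise limit is $\chi_{\{u=0\}}\,\hat{g}_i\,Du\,\hat{\varphi}_i$, which vanishes almost everywhere because $Du = 0$ a.e. on $\{u=0\}$ (a classical Stampacchia-type property of Sobolev functions, applied locally to $u \in H^1_{\mathrm{loc}}(\Omega)$). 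Hence the whole right-hand side tends to $0$.

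Finally, on the left-hand side, the integrands $F(x,u)\,Z_\delta(u)\,v$ are nonnegative and converge pointwise to $F(x,u)\,\chi_{\{u=0\}}\,v$ as $\delta \to 0^+$, so Fatou's lemma gives
\begin{equation*}
\int_{\{u=0\}} F(x,u)\,v \;=\; \int_\Omega F(x,u)\,\chi_{\{u=0\}}\,v \;\leq\; \liminf_{\delta \to 0^+}\int_\Omega F(x,u)\,Z_\delta(u)\,v \;\leq\; 0,
\end{equation*}
and the nonnegativity of the integrand forces equality to zero, which is \eqref{5800}. The only delicate point is the justification that $Du = 0$ a.e. on $\{u=0\}$, but since $u \in H^1_{\mathrm{loc}}(\Omega)$ this follows from the standard fact that $D(u^+) = \chi_{\{u>0\}}Du$ (and here $u = u^+$ since $u \geq 0$), so no serious obstacle arises.
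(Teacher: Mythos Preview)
Your proof is correct and follows essentially the same approach as the paper: pass to the limit $\delta\to 0$ in estimate \eqref{5701bis}, use dominated convergence on the right-hand side together with $Du=0$ a.e.\ on $\{u=0\}$, and bound the left-hand side from below by $\int_{\{u=0\}}F(x,u)v$. The only cosmetic difference is that the paper uses the direct pointwise inequality $\chi_{\{u=0\}}\leq Z_\delta(u)$ (since $Z_\delta(0)=1$) to obtain $\int_{\{u=0\}}F(x,u)v\leq\int_\Omega F(x,u)Z_\delta(u)v$ for every $\delta>0$, rather than invoking Fatou's lemma; your use of Fatou is of course also valid.
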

\begin{proof}[{\bf Proof of Proposition~\ref{prop69}}]
$\mbox{}$

Let $\delta>0$ tend to $0$ in \eqref{5701bis}. In the left-hand side one has
$$
\int_{\{u=0\}}F(x,u)v\leq \int_{\Omega} F(x,u)Z_\delta(u)v,
$$
while in the right-hand side $
Z_\delta(u)$ tends to $\chi_{_{\{u=0\}}}$ almost everywhere in $\Omega$; but since $u\in H^1_{\mbox{\tiny loc}}(\Omega)$, one has 
$
Du=0$ almost everywhere on the set $\{u=0\}$, and therefore, since $\hat{g_i}Du\,\hat{\varphi_i}\in L^1(\Omega)$, and since $0\leq Z_{\delta (u)}\leq 1$, one has, by Lebesgue's dominated convergence Theorem,
$$
\into Z_\delta(u)\hat{g_i}Du\,\hat{\varphi_i}\rightarrow 0\mbox{ as } \delta\to 0.
$$

This proves \eqref{5800}.
\end{proof}

\begin{proof}[{\bf Proof of Proposition~\ref{prop3}}]
$\mbox{}$

In addition to the function $Z_\delta(s)$ defined by \eqref{num23bis}, we define for $\delta>0$ the functions
\begin{equation}
\label{5702}
Y_\delta(s)=\begin{cases}
 s & \mbox{if } 0\leq s\leq \delta, \\
 -\frac{s^2}{2\delta}+2s-\frac{\delta}{2} & \mbox{if }  \delta\leq s\leq 2\delta ,\\
\frac{3}{2}\delta & \mbox{if } 2\delta\leq s,
\end{cases}
\end{equation}

\begin{equation}
\label{5702bis}
R_\delta(s)=\begin{cases}
 0 & \mbox{if } 0\leq s\leq \delta, \\
 \frac{s^2}{2\delta}-\frac{\delta}{2} & \mbox{if }  \delta\leq s\leq 2\delta ,\\
\frac{3}{2}\delta & \mbox{if } 2\delta\leq s.
\end{cases}
\end{equation}
Observe that $Z_\delta$, $Y_\delta$ and $R_\delta$ are Lipschitz continuous and piecewise $C^1$ functions with 
\begin{equation}
\label{5703}
\begin{cases}
\dys Y'_{\delta}(s)=Z_\delta(s), \,\,Z'_\delta(s)=-\frac{1}{\delta}\chi_{_{\{\delta< s< 2\delta\}}}(s),\,\, R'_{\delta}(s)=-sZ'_\delta(s),\\
\dys Y_\delta(s)=sZ_\delta(s)+R_\delta(s),\,\,0\leq Y_\delta(s)\leq \frac 32 \delta,\,\, \forall s\geq 0.
\end{cases}
\end{equation}
\\
\smallskip

\noindent{{{\bf First step}}}. In this first step we will prove that for every $\delta>0$ 
\begin{equation}
\label{5704}
\begin{cases}
Y_\delta(u)\in H^1_{\mbox{\tiny loc}}(\Omega)\cap L^\infty(\Omega),\\
\varphi Y_\delta(u)\in H_0^1(\Omega)\quad \forall\varphi\in H_0^1(\Omega)\cap L^\infty(\Omega),
\end{cases}
\end{equation}
\begin{equation}
\label{5705}
Z_\delta(u)v\in \mathcal{V}(\Omega)\quad \forall v\in\mathcal{V}(\Omega),
\end{equation}
\begin{equation}
\label{5706}
R_\delta(u)\in H_0^1(\Omega)\cap L^\infty(\Omega).
\end{equation}
\smallskip

Since $u\in H^1_{\mbox{\tiny loc}}(\Omega)$ by (\ref{sol1} {\it i}), one has $Y_\delta(u)\in H^1_{\mbox{\tiny loc}}(\Omega)\cap L^\infty(\Omega)$ with 
$$
DY_\delta(u)=Y'_\delta(u)Du=Z_\delta(u)Du\mbox{ in } \mathcal{D}'(\Omega),
$$
 and for every $\varphi\in H_0^1(\Omega)\cap L^\infty(\Omega)$, one has $\varphi Y_\delta(u)\in H_{\mbox{\tiny loc}}^1(\Omega)\cap L^\infty(\Omega)$ with
 $$
 D(\varphi Y_\delta(u))=\varphi Z_\delta(u)Du +Y_\delta(u)D\varphi \mbox{ in } \mathcal{D}'(\Omega),
 $$
 which using \eqref{3111} and the last assertion of \eqref{5703} implies that $D(\varphi Y_\delta(u))\in (L^2(\Omega))^N$, and therefore that $\varphi Y_\delta(u)\in H^1(\Omega)\cap L^\infty(\Omega)$. Since 
 $
 \dys 0\leq\varphi Y_\delta(u)\leq \frac 32 \delta \varphi
 $, and since $\varphi\in H_0^1(\Omega)$, Lemma~\ref{lem0} of Appendix~\ref{appendixa} below completes the proof of \eqref{5704}.
 \\
 \smallskip
 
 On the other hand  $Z_\delta(u)\in H^1_{\mbox{\tiny loc}}(\Omega)$ and 
 $$
 DZ_\delta(u)=-\frac{1}{\delta}\chi_{_{\{\delta< u< 2\delta\}}}Du \mbox{ in } \mathcal{D}'(\Omega).
 $$
 In view of (\ref{sol1} {\it iii}) with $k=\delta$, this implies that $DZ_\delta(u)\in (L^2(\Omega))^N$, and therefore 
 \begin{equation}
 \label{5707}
 Z_\delta(u)\in H^1(\Omega)\cap L^\infty(\Omega),
 \end{equation}
which in turn implies that
 \begin{equation}
 \label{548bis}
 Z_\delta(u)\varphi \in H_0^1(\Omega)\cap L^\infty(\Omega)\quad \forall \varphi \in H_0^1(\Omega)\cap L^\infty(\Omega).
 \end{equation}
 Consider now $v\in \mathcal{V}(\Omega)$  which satisfies \eqref{5701}. Observe that
 $Z_\delta(u)v\in H_0^1(\Omega)\cap L^\infty(\Omega)$, and that
 \begin{equation}
\label{num25}
\begin{cases}
\dys-div\, {}^t\!A(x)D(Z_\delta(u)v)=\\
\dys=Z_\delta(u)(-div \, {}^t\!A(x)Dv)- \, {}^t\!A(x)DvDZ_\delta(u)\,+\\
+v(-div \, {}^t\!A(x)DZ_\delta(u))-\, {}^t\!A(x)DZ_\delta(u)Dv=\\
=\sum_{i\in I}Z_\delta(u)\hat{\varphi}_i (-div \,\hat{g}_i)+Z_\delta(u)\hat{f}-\, {}^t\!A(x)DvDZ_\delta(u)\,+\\
+v(-div\, {}^t\!A(x)DZ_\delta(u))-\, {}^t\!A(x)DZ_\delta(u)Dv  \mbox{ in } \mathcal{D}'(\Omega).
\end{cases}
\end{equation}
This proves that  $Z_\delta(u)v\in\mathcal V(\Omega)$, namely \eqref{5705}.\\
\smallskip

Finally, since $u\in H^1_{\mbox{\tiny loc}}(\Omega)$ by (\ref{sol1} {\it i}), one has 
$$R_\delta(u)\in H^1_{\mbox{\tiny loc}}(\Omega)\cap L^\infty(\Omega)$$ with 
$$
DR_\delta(u) =R'_\delta(u)Du=\frac{u}{\delta}\chi_{_{\{\delta< u< 2\delta\}}}Du  \mbox{ in } \mathcal{D}'(\Omega).
$$
Therefore (\ref{sol1} {\it iii}) with $k=\delta$ implies that $DR_\delta(u)\in (L^2(\Omega))^N$, which proves that $R_\delta(u)\in H^1(\Omega)\cap L^\infty(\Omega)$. 
Then using  the inequality $ 0\leq R_\delta(u)\leq G_{\frac{\delta}{2}}(u)$, property  (\ref{sol1} {\it iii}) with $\dys k=\frac \delta2$ and Lemma~\ref{lem0} of Appendix~\ref{appendixa} below completes the proof of \eqref{5706}.\\
\smallskip

\noindent{{{\bf Second step}}}. In this step we fix $\delta>0$ and $k$ such that
\begin{equation}
\label{kd}
k\geq 2\delta.
\end{equation}

Since $Z_\delta(u)v\in\mathcal{V}(\Omega)$ for every $v\in\mathcal{V}(\Omega)$ (see \eqref{5705}) with $Z_\delta(u)v\geq 0$ when $v\geq 0$, we can take $Z_\delta(u)v$ as test function in (\ref{sol2} {\it ii}), obtaining in view of \eqref{num25}
\begin{align}
\label{5711}
\begin{cases}
\dys \into  {}^t\!A(x) D(Z_\delta(u)v)DG_k(u)\,+\sum_{i\in I} \!\into \hat{g_i}D( Z_\delta(u)\hat{\varphi}_iT_k(u))\,+\\
\dys +\!\!\into Z_\delta(u)\hat{f}\,T_k(u)-\!\!\! \into \, {}^t\!A(x)Dv DZ_\delta(u)\,T_k(u)\,+\\
\dys+\into \, {}^t\!A(x)DZ_\delta(u)D(vT_k(u))-\dys\into \, {}^t\!A(x)DZ_\delta(u)  Dv\,T_k(u)\,=\\
\dys =\langle-div \, {}^t\!A(x)D(Z_\delta(u)v) ,G_k(u)\rangle_{H^{-1}(\Omega),H_0^1(\Omega)}\,+\\
\dys+\langle\langle -div\, {}^t\!A(x)D(Z_\delta(u)v) ,T_k(u) \rangle\rangle_\Omega=\\
\dys=\into F(x,u)Z_\delta(u)v.
\end{cases}
\end{align}

As far as the first term of the left-hand side of \eqref{5711} is concerned, we have, since $k\geq 2\delta$, 
\begin{equation}
\label{613bis}
\into \, {}^t\!A(x)D(Z_\delta(u)v)  DG_k(u)=0;
\end{equation}
indeed since $u\in H^1_{\mbox{\tiny loc}}(\Omega)$ by (\ref{sol1} {\it i}) one has 
\begin{align*}
\begin{cases}
\dys {}^t\!A(x)D(Z_\delta(u)v)  DG_k(u)=\\
\dys= - {}^t\!A(x) Du Du\, \frac{1}{\delta} \chi_{_{\{\delta< u< 2\delta\}}}\chi_{_{\{u>k\}}}+ {}^t\!A(x) Dv Du \, Z_\delta(u)\chi_{_{\{u>k\}}}\mbox{ in } \mathcal{D}'(\Omega),
\end{cases}
\end{align*}
where each term is zero almost everywhere when $k\geq 2\delta$.

As far as the fourth term of the left-hand side of \eqref{5711} is concerned, we have, since $u\in H^1_{\mbox{\tiny loc}}(\Omega)$ and since $k\geq 2\delta$, using \eqref{5703}, 
$$
DZ_\delta(u)\, T_k(u)=Du \,Z_\delta'(u)\, T_k(u)=-Du\, R_\delta'(u)=-DR_\delta(u) \mbox{ in } \mathcal{D}'(\Omega),
$$
which using the fact that $R_\delta(u)\in H_0^1(\Omega)$ (see \eqref{5706}) and \eqref{5701} implies that 
\begin{align}
\label{5713}
\begin{cases}
\dys- \into  {}^t\!A(x) Dv DZ_\delta(u)\, T_k(u)=  \into  {}^t\!A(x) Dv DR_\delta (u)=\\
\dys= \langle-div \, {}^t\!A(x)Dv,R_\delta(u)\rangle_{H^{-1}(\Omega),H_0^1(\Omega)}=\\
\dys= \sum_{i\in I} \into \hat{g_i}D(\hat{\varphi_i}R_\delta(u))+\into \hat{f}R_\delta(u).
\end{cases}
\end{align}

As far as the fifth term of the left-hand side of \eqref{5711} is concerned, we have, since $u\in H^1_{\mbox{\tiny loc}}(\Omega)$, 
\begin{align*}
\begin{cases}
\, {}^t\!A(x)DZ_\delta(u)D(vT_k(u))=\\
= {}^t\!A(x)DZ_\delta(u)Dv\, T_k(u)+ {}^t\!A(x)DZ_\delta(u)DT_k(u)\,v  \mbox{ in } \mathcal{D}'(\Omega),
\end{cases}
\end{align*}
which, since $k\geq 2\delta$, implies that
\begin{align}
\label{5712}
\begin{cases}
\dys\into \, {}^t\!A(x)DZ_\delta(u)D(vT_k(u))=\\
\dys=\into  {}^t\!A(x)DZ_\delta(u)Dv\,T_k(u)- \frac{1}{\delta}\int_{\{\delta<u<2\delta\}}  {}^t\!A(x)Du Du \, v. 
\end{cases}
\end{align}

Collecting together  \eqref{5711}, \eqref{613bis}, \eqref{5713} and \eqref{5712}, we have proved that
\begin{align}
\label{5714}
\begin{cases}
\dys\sum_{i\in I} \!\into \hat{g_i}D(\hat{\varphi_i}\big(Z_\delta(u)T_k(u)+R_\delta(u)\big))+\!\!\into \hat{f}\,\big(Z_\delta(u)T_k(u)+R_\delta(u)\big)=\\
\dys=\into F(x,u)  Z_\delta(u)v+\frac{1}{\delta}\int_{\{\delta<u<2\delta\}}  {}^t\!A(x)DuDu\,v .
\end{cases}
\end{align}

Since when $k\geq 2\delta$ one has (see \eqref{5703})
\begin{align*}
\begin{cases}
\dys   {}^t\!A(x) Du Du\, v \geq 0,\,\, \dys 0\leq Y_\delta(u)\leq \frac 32 \delta,\\
\dys Z_\delta(u)T_k(u)+R_\delta(u)=Z_\delta(u)u+R_\delta(u)=Y_\delta(u),\\
\dys \hat{g_i}D(\hat{\varphi_i}Y_\delta(u))=\hat{g_i}D\hat{\varphi_i}\,Y_\delta(u)+\hat{g_i}Du\,Z_\delta(u)\hat{\varphi_i},\\
\end{cases}
\end{align*}
estimate \eqref{5701bis}  follows from \eqref{5714}.

Proposition~\ref{prop3} is proved.
\end{proof}

\smallskip
 \noindent{} {\bf 5.4. A priori estimate of $\beta(u)$ in $H_0^1(\Omega)$ }$\mbox{ }$
 \label{sub54}
 
 This fourth a priori estimate is actually in some sense a regularity result, since it asserts that for every $u$ solution to problem \eqref{eqprima} in the sense of Definition~\ref{sol}, a certain function of $u$ actually belongs to $H_0^1(\Omega)$. This property will be used in the proofs of the Comparison Principle~\ref{prop0} and of the Uniqueness Theorem~\ref{uniqueness}.
 
 Define the function $\beta:s\in [0,+\infty[\rightarrow\beta(s)\in [0,+\infty[$ by 
\begin{equation}
\label{62bis}
\beta(s)=\int_0^s \sqrt{\Gamma'(t)}dt.
\end{equation}

 \begin{proposition}
\label{lem2}
Assume that the matrix $A$ and the function $F$ satisfy \eqref{eq0.0}, \eqref{car} and \eqref{eq0.1}. Then for every $u$ solution to problem \eqref{eqprima} in the sense of Definition~\ref{sol} one has
\begin{equation}
\label{510bis1}
\beta(u)\in H_0^1(\Omega),
\end{equation}
with
\begin{equation}
\label{num11bis}
\alpha\|D\beta(u)\|_{(L^2(\Omega))^N}^2\leq \|h\|_{L^1(\Omega)}.
\end{equation}
\end{proposition}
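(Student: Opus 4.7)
The plan is to make rigorous the formal identity obtained by using $\Gamma(u)$ as a test function in \eqref{eqprima}:
\begin{equation*}
\into A(x)Du\,Du\,\Gamma'(u) = \into F(x,u)\Gamma(u) \leq \into h(x) = \|h\|_{L^1(\Omega)},
\end{equation*}
combined with coercivity and the identity $|D\beta(u)|^2 = \Gamma'(u)|Du|^2$ (valid a.e.\ since $\beta\in C^1$ and $u\in H^1_{\mbox{{\tiny loc}}}(\Omega)$). Since $\Gamma(u)$ need not belong to $\mathcal{V}(\Omega)$, I would use a two-parameter truncation.

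For $0<k<n$, the plan is to take as test function
\begin{equation*}
v_{k,n} := \Gamma(T_n(u)) - \Gamma(T_k(u)),
\end{equation*}
so that $0\leq v_{k,n}\leq \Gamma(n)-\Gamma(k)$, $v_{k,n}=0$ on $\{u\leq k\}$, and $Dv_{k,n}=\Gamma'(u)\,Du\,\chi_{\{k<u<n\}}$ in $\mathcal{D}'(\Omega)$. Denoting $M_n=\max_{[0,n]}\Gamma'$, the bounds $|Dv_{k,n}|\leq M_n|DG_k(u)|$ and $0\leq v_{k,n}\leq M_n\,G_k(u)$, together with (\ref{sol1}~iii) and Lemma~\ref{lem0}, would give $v_{k,n}\in H_0^1(\Omega)\cap L^\infty(\Omega)$. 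Choosing a $C^1$ cutoff $\psi_k$ equal to $0$ on $[0,k/2]$ and to $1$ on $[k,+\infty[$, exactly as in Step~1 of the proof of Proposition~\ref{prop1}, I would verify $-\text{div}\,{}^t\!A(x)Dv_{k,n}=\psi_k(u)\bigl(-\text{div}\,{}^t\!A(x)Dv_{k,n}\bigr)$ in $\mathcal{D}'(\Omega)$, since $Dv_{k,n}$ is supported in $\{u\geq k\}$ while $\psi_k\equiv 1$ and $\psi_k'\equiv 0$ there; this exhibits $v_{k,n}\in\mathcal{V}(\Omega)$.

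Inserting $v=v_{k,n}$ with $j=n$ into (\ref{sol2}~ii), the term $\into{}^t\!A\,Dv_{k,n}\,DG_n(u)$ vanishes by disjoint supports, and $\into{}^t\!A\,Dv_{k,n}\,D(\psi_k(u)T_n(u))$ collapses to $\int_{\{k<u<n\}}A(x)Du\,Du\,\Gamma'(u)$ (again because $\psi_k\equiv 1$ and $\psi_k'\equiv 0$ on $\{u\geq k\}$). A case-by-case check shows $v_{k,n}\leq\Gamma(u)$ for every $u\geq 0$; hence (\ref{eq0.1}~iii) yields $\into F(x,u)\,v_{k,n}\leq\|h\|_{L^1(\Omega)}$. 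Coercivity then gives $\alpha\int_{\{k<u<n\}}\Gamma'(u)|Du|^2\leq\|h\|_{L^1(\Omega)}$, and taking $n\to\infty$ and then $k\to 0^+$ by monotone convergence, using that $Du=0$ a.e.\ on $\{u=0\}$, produces \eqref{num11bis}.

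To establish $\beta(u)\in H_0^1(\Omega)$, I would run the same truncation with $\beta$ in place of $\Gamma$: set $w_{k,n}:=\beta(T_n(u))-\beta(T_k(u))$, for which $|Dw_{k,n}|^2=\Gamma'(u)|Du|^2\chi_{\{k<u<n\}}\leq|D\beta(u)|^2\in L^1(\Omega)$ and, by the mean value theorem, $0\leq w_{k,n}\leq(\max_{[k,n]}\sqrt{\Gamma'})\,G_k(u)$; Lemma~\ref{lem0} then gives $w_{k,n}\in H_0^1(\Omega)$. Dominated convergence as $k\to 0^+$ (with $n$ fixed) places $\beta(T_n(u))$ in $H_0^1(\Omega)$, and Poincar\'e's inequality furnishes the uniform bound $\|\beta(T_n(u))\|_{L^2(\Omega)}\leq C_P\,\|D\beta(T_n(u))\|_{L^2(\Omega)}\leq C_P\,\|D\beta(u)\|_{L^2(\Omega)}$. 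Combined with $\beta(T_n(u))\to\beta(u)$ a.e.\ and $D\beta(T_n(u))\to D\beta(u)$ strongly in $(L^2(\Omega))^N$ by dominated convergence, a weak-compactness argument in $H_0^1(\Omega)$ would identify the limit and place $\beta(u)$ in $H_0^1(\Omega)$. The main obstacle I anticipate is precisely this last step: the gradient estimate itself is a direct test-function computation, but because $\beta$ need not be globally Lipschitz the inclusion $\beta(u)\in L^2(\Omega)$ is not automatic from $u\in L^2(\Omega)$ (Remark~\ref{rem52u}) and ultimately comes through the Poincar\'e bound on the truncated sequence.
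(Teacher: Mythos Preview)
Your proof is correct and follows essentially the same route as the paper. The only cosmetic difference is the choice of truncated test function: the paper uses $\Gamma(S_{\delta,k}(u))=\Gamma(T_k(u)-T_\delta(u))$, whereas you use $v_{k,n}=\Gamma(T_n(u))-\Gamma(T_k(u))$; both are supported on a level set $\{k<u<n\}$ (resp.\ $\{\delta<u<k\}$), both are shown to belong to $\mathcal{V}(\Omega)$ via the same cutoff $\psi_k(u)$, both yield the bound $F(x,u)\cdot(\text{test})\leq h(x)$ and hence $\alpha\int_{\{k<u<n\}}\Gamma'(u)|Du|^2\leq\|h\|_{L^1(\Omega)}$, and both conclude $\beta(u)\in H_0^1(\Omega)$ by first placing $\beta(T_n(u))$ in $H_0^1(\Omega)$ (via Lemma~\ref{lem0} on the two-parameter family, then $\delta\to 0$ resp.\ $k\to 0$) and then passing $n\to\infty$ using the uniform $H_0^1$ bound and a.e.\ convergence.
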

\begin{remark}{\bf (Formal proof of Proposition~\ref{lem2}).} 
\label{56bis}
Estimate \eqref{num11bis} can be obtained formally by taking $\Gamma(u)$ as test function in equation \eqref{eqprima}, using the coercivity \eqref{eq0.0} and the growth condition~(\ref{eq0.1}~{\it iii}), which implies that 
$$
0\leq F(x,u)\Gamma(u) \leq \frac{h(x)}{\Gamma(u)}\Gamma(u)\leq h(x). 
$$

This formal computation will be made mathematically correct in the proof below.
\qed
\end{remark}
\begin{proof}[{\bf Proof of  Proposition~\ref{lem2}}]
$\mbox{}$

As in \eqref{defs}, we define, for every $\delta$ and $k$ with $0<\delta<k$, the function $S_{\delta,k}$ as  
\begin{equation}
\label{562bis}
S_{\delta,k}(s)=\begin{cases}
 0 & \mbox{if } 0\leq s\leq \delta, \\
 s-\delta & \mbox{if }  \delta\leq s\leq k,\\
k-\delta & \mbox{if } k\leq s.
\end{cases}
\end{equation} 

As in the beginning of the proof of Proposition~\ref{prop1}, one can prove that the function $\Gamma(S_{\delta,k}(u))$ belongs to $\mathcal{V}(\Omega)$ and that (compare with \eqref{num7})
\begin{equation*}
 -div\, {}^t\!A(x)D \Gamma(S_{\delta,k}(u))= \psi_\delta(u) (-div\, {}^t\!A(x)D \Gamma(S_{\delta,k}(u))\quad \mbox{in } \mathcal{D}'(\Omega),
\end{equation*}
where $\psi_\delta(s)$ is a $C^1$ nondecreasing function such that 
$$
\psi_\delta(s)=0 \mbox{ for } 0\leq s\leq \frac{\delta}{2} \mbox{ and } \psi(s)=1 \mbox{ for } s\geq \delta.
$$

Since $\Gamma(S_{\delta,k}(u))\in\mathcal{V}(\Omega)$ and since $\Gamma(S_{\delta,k}(u))\geq 0$, we can use $\Gamma(S_{\delta,k}(u))$ as test function in  (\ref{sol2} {\it ii}). We get
\begin{align*}
\begin{cases}
\dys\into  {}^t\!A(x) D \Gamma(S_{\delta,k}(u)) D G_k(u)+\into  {}^t\!A(x) D \Gamma(S_{\delta,k}(u))D(\psi_\delta(u)T_k(u))=\\
\dys =\langle-div \, {}^t\!A(x)D\Gamma(S_{\delta,k}(u)),G_k(u)\rangle_{H^{-1}(\Omega),H_0^1(\Omega)}+\\\dys+\langle\langle-div \, {}^t\!A(x)D\Gamma(S_{\delta,k}(u)),T_k(u)\rangle\rangle_{\Omega}=\\
\dys=\into F(x,u) \Gamma(S_{\delta,k}(u)),
\end{cases}
\end{align*}
which can be written as 
\begin{align}
\label{sev4}
\begin{cases}
\dys\into  {}^t\!A(x) D \Gamma(S_{\delta,k}(u))D G_k(u)+\into  {}^t\!A(x) D \Gamma(S_{\delta,k}(u)) D\psi_\delta(u)\,T_k(u)\,+\\
\dys +\into  {}^t\!A(x) D \Gamma(S_{\delta,k}(u)) DT_k(u)\, \psi_\delta(u)=\into F(x,u) \Gamma(S_{\delta,k}(u)).
\end{cases}
\end{align}

Note  that the two first integrals are zero, since $D\Gamma(S_{\delta,k}(u))$ is zero outside of the set $\{\delta<u<k\}$, while $DG_k(u)$ is zero outside of the set $\{u>k\}$ and $D\psi_\delta(u)$ is zero outside of the set $\{u<\delta\}$. 

Note also that $\psi_\delta(u)=1$ on the set $\{u\geq \delta\}$ while $DS_{\delta,k}(u)=0$ outside of this set. Therefore the third term of \eqref{sev4} can be written as
\begin{align}
\label{sev5}
\begin{cases}
\dys \into  {}^t\!A(x) D \Gamma(S_{\delta,k}(u)) DT_k(u)\, \psi_\delta(u)=\\\vspace{0.1cm}
\dys = \into  {}^t\!A(x) D S_{\delta,k}(u) DT_k(u)\Gamma'(S_{\delta,k}(u))=\\
\dys \dys = \into  {}^t\!A(x) D S_{\delta,k}(u) D S_{\delta,k}(u) \Gamma'(S_{\delta,k}(u))\geq\\
\dys \geq  \alpha\into   |D S_{\delta,k}(u)|^2\Gamma'(S_{\delta,k}(u))=\alpha\into |D\beta(S_{\delta,k}(u))|^2. 
\end{cases}
\end{align}

For what concerns the right-hand side of \eqref{sev4}, we have, using the growth condition (\ref{eq0.1} {\it iii}) and the fact that $\Gamma$ is increasing,
\begin{align}
\label{sev6}
\dys F(x,u) \Gamma(S_{\delta,k}(u))\leq  \frac{h(x)}{\Gamma(u)}\Gamma(S_{\delta,k}(u))\leq h(x).
\end{align}

By \eqref{sev4}, \eqref{sev5} and \eqref{sev6} we get 
\begin{equation}
\label{566}
\alpha\into |D\beta(S_{\delta,k}(u))|^2 \leq \|h\|_{L^1(\Omega)} \,\, \forall \delta,\,\, 0<\delta<k.
\end{equation}

Passing to the limit as $\delta$ tends to zero gives implies that
\begin{equation}
\label{560bis}
\alpha\|D\beta(T_k(u))\|_{(L^2(\Omega))^N}^2\leq \|h\|_{L^1(\Omega)}\,\, \forall k>0.
\end{equation}

On the other hand, the fact that for every $\delta$ with $0<\delta<k$
\begin{align}
\label{564bis}
\begin{cases}
\dys |D\beta(S_{\delta,k}(u))|=|\beta'(S_{\delta,k}(u))\chi_{_{\{\delta<u<k\}}} Du|\leq \sup_{0\leq s\leq k}\beta'(s)\,|D G_\delta(u)|,\\
\dys 0\leq \beta(S_{\delta,k}(u))\leq \beta(G_{\delta}(u)),
\end{cases}
\end{align}
condition (\ref{sol1} {\it iii}) and Lemma~\ref{lem0} of Appendix~\ref{appendixa} below imply that $\beta(S_{\delta,k}(u))\in H_0^1(\Omega)\cap L^\infty(\Omega)$. The fact that $\beta(S_{\delta,k}(u))$ is bounded in $H_0^1(\Omega)$ by \eqref{566} and converges to $\beta(T_k(u))$ almost everywhere as $\delta$ tends to zero
then implies that 
\begin{equation}
\label{559bis}
\beta (T_k(u))\in H_0^1(\Omega).
\end{equation}

Since $\beta (T_k(u))\in H_0^1(\Omega)$ is bounded in $H_0^1(\Omega)$ in view of \eqref{560bis} and converges to $\beta(u)$ almost everywhere in $\Omega$ (note that $u$ is finite almost everywhere since $u\in L^2(\Omega)$), we have proved that $\beta(u)$ belongs to $H_0^1(\Omega)$ and satisfies \eqref{num11bis}.

Proposition~\ref{lem2} is proved.
\end{proof}

 \smallskip

\section{Proofs of the Stability Theorem~\ref{est} \\ and of the Existence  Theorem~\ref{EUS}}
\label{proofexistence}

\noindent{ } {\bf 6.1. Proof of the Stability Theorem~\ref{est}}
\label{proofstability}
$\mbox{}$\\
\smallskip

\noindent{{{\bf First step}}}.
Since for every $n$ the function $F_n(x,s)$ satisfies assumptions \eqref{car} and \eqref{eq0.1} for the same $h$ and the same $\Gamma$, every solution $u_n$ to problem \eqref{eqprima}$_n$ in the sense of Definition~\ref{sol} satisfies the a priori estimates \eqref{num3}, \eqref{53ter},  \eqref{619bis}, \eqref{516bis}, \eqref{111}, \eqref{512bis1} and \eqref{5701bis}.

Therefore $u_n$ is bounded in $L^2(\Omega)$ and in $H^1_{\mbox{\tiny loc}}(\Omega)$,  and there exist a subsequence, still labelled by $n$, and a function $u_\infty$ such that 
\begin{align}
\label{num2weak}
\begin{cases}
\dys u_n\rightharpoonup u_\infty \mbox{ in } L^2(\Omega)\mbox{  weakly, in } H^1_{\mbox{\tiny loc}}(\Omega) \mbox{ weakly and a.e. in }\Omega,\\
\dys  G_k(u_n)\rightharpoonup G_k(u_\infty) \, \mbox{ in } H_0^1(\Omega)\, \mbox{weakly}\,\, \forall k>0,\\
\dys \varphi T_k (u_n)\rightharpoonup \varphi T_k (u_{\infty}) \, \mbox{ in } H^1_0(\Omega)\, \mbox{weakly}\,\, \forall k>0,\,\,\forall \varphi\in H_0^1(\Omega)\cap L^\infty(\Omega).\\
\end{cases}
\end{align} Since $u_n\geq 0$, this function $u_\infty$ satisfies $u_\infty\geq 0$.

Therefore $u_\infty$ satisfies \eqref{sol1}.  

Fix now a function $v$ 
$$
v \in \mathcal V(\Omega),\,v\geq 0,\\ \dys\mbox{with }-div \, {}^t\!A(x)Dv=\sum_{i \in I} \hat{ \varphi}_i (-div\, \hat{ g}_i)+\hat{ f} \mbox{ in } \mathcal{D}'(\Omega).
$$

Using $v$ as test function in (\ref{sol2} {\it ii})$_n$, we obtain
\begin{align}
\label{72}
\begin{cases}
 \dys \into\, {}^t\!A(x)Dv DG_k(u_n)+\sum_{i\in I} \into \hat{g_i} D(\hat{\varphi_i} T_k(u_n))+\into \hat{f} T_k(u_n)=\\
 =\langle -div\, {}^t\!A(x)Dv, G_k(u_n) \rangle_{H^{-1}(\Omega),H_0^1(\Omega)}+\langle\langle -div\, {}^t\!A(x)Dv, T_k(u_n) \rangle\rangle_\Omega=\\
=\displaystyle \into F_n(x,u_n) v.
\end{cases}
\end{align}

Since the left-hand side of \eqref{72} is bounded  independently of $n$ for every $k>0$ fixed in view of the estimates \eqref{num3} and \eqref{516bis}, we have
$$\dys\into F_n(x,u_n)v\leq C(v)<+\infty \quad \forall n,$$ which using the almost everywhere convergence of $u_n$ to $u_\infty$, assumption \eqref{num1} on the functions $F_n$ and Fatou's Lemma gives 
 \begin{equation}
 \label{71bis}
\into F_\infty(x,u_\infty)v\leq C(v)<+\infty\quad \forall n,
\end{equation}
namely (\ref{sol2} {\it i})$_\infty$.

It remains to prove that (\ref{sol2} {\it ii})$_\infty$ holds true and that the convergences in \eqref{num2weak} are strong.\\
\smallskip 

\noindent{{{\bf Second step}}}.
For $\delta>0$ fixed, we write \eqref{72} as 
\begin{align}
\label{73}
\begin{cases}
\dys \into\, {}^t\!A(x)Dv DG_k(u_n)+  \displaystyle\sum_{i\in I} \into \hat{g_i} D(\hat{\varphi_i} T_k(u_n))+\into \hat{f} T_k(u_n)=\\
 =\langle -div\, {}^t\!A(x)Dv, G_k(u_n) \rangle_{H^{-1}(\Omega),H_0^1(\Omega)}+\langle\langle -div\, {}^t\!A(x)Dv, T_k(u_n) \rangle\rangle_\Omega=\\
=\displaystyle \int_{\Omega} F_n(x,u_n) Z_{\delta}(u_n)v+\int_{\Omega} F_n(x,u_n) (1-Z_{\delta}(u_n))v.
\end{cases}
\end{align}

It is easy to pass to the limit in the left-hand side of \eqref{73}, obtaining
\begin{align}
\label{74}
\begin{cases}
\dys \into\, {}^t\!A(x)Dv DG_k(u_n)+\displaystyle\sum_{i\in I} \into \hat{g_i} D(\hat{\varphi_i} T_k(u_n))+
\into \hat{f}T_k(u_n)\to \\
\to \dys \into\, {}^t\!A(x)Dv DG_k(u_\infty)+\sum_{i\in I} \into \hat{g_i} D(\hat{\varphi_i} T_k(u_\infty))+\into \hat{f}T_k(u_\infty)\\
\dys \mbox{as } n\to +\infty.
\end{cases}
\end{align}

Concerning the first term of the right-hand side of \eqref{73} we use the a priori estimate \eqref{5701bis},  namely
\begin{align*}
\begin{cases}
\dys\forall \delta>0,\,\,\int_{\{u_n\leq\delta\}} F_n(x,u_n)Z_{\delta}(u_n)v\leq\\
\dys\leq \frac 32 \left(\into \left|\sum_{i\in I} \hat{g_i} D\hat{\varphi_i}+ \hat{f}\right| \right)\delta+\sum_{i\in I} \into Z_\delta(u_n) \hat{g_i}Du_n \,\hat{\varphi_i},
\end{cases}
\end{align*}
in which we pass to the limit in $n$ for $\delta>0$ fixed. Since $Z_\delta(u_n) \hat{g_i}$ tends strongly to $Z_\delta(u_\infty) \hat{g_i}$ in $(L^2(\Omega))^N$ while $Du_n \,\hat{\varphi_i}$ tends weakly to $Du_\infty \,\hat{\varphi_i}$ in $(L^2(\Omega))^N$ (see \eqref{111}), we obtain
\begin{align}
\label{75}
\begin{cases}
\dys\forall\delta>0,\,\,\limsup_{n} \int_{\{u_n\leq\delta\}} F_n(x,u_n)Z_{\delta}(u_n)v\leq\\
\dys\leq \frac 32\left(\into \left|\sum_{i\in I} \hat{g_i} D\hat{\varphi_i}+ \hat{f}\right| \right)\delta+\sum_{i\in I} \into Z_\delta(u_\infty) \hat{g_i}Du_\infty \,\hat{\varphi_i}.
\end{cases}
\end{align}
 
  Since $Z_\delta(u_\infty)$ tends to $\chi_{_{\{u_\infty=0\}}}$ almost everywhere in $\Omega$  as $\delta$ tends to zero, and since $u_\infty\in H^1_{\mbox{\tiny loc}}(\Omega)$ implies that $D u_\infty=0$ almost everywhere on $\{x\in\Omega:u_\infty(x)=0\}$, the right-hand side of \eqref{75} tends to $0$ as $\delta$ tends to zero.

 We have proved that the first term of the right-hand side of \eqref{73} satisfies
\begin{equation}
\label{76}
\limsup_{n} \int_{\{u_n\leq\delta\}} F_n(x,u_n)Z_{\delta}(u_n)v\rightarrow0 \,\mbox{  as  }\, \delta\to0.
\end{equation}
\\
\smallskip

\noindent{{{\bf Third step}}}.
Let us now prove that
\begin{equation}
\label{88}
\int_{\{u_\infty=0\}} F_{\infty}(x,u_{\infty})v=0.
\end{equation}

In view of assumption \eqref{num1} on the convergence of the functions $F_n(x,s)$, of the continuity of the function $Z_\delta$, and of the almost everywhere convergence of $u_n$ to $u_\infty$, one has for every $\delta>0$ fixed 
\begin{equation}
\label{66665}
F_n(x,u_n)Z_{\delta}(u_n) v\rightarrow F_{\infty}(x,u_\infty)Z_{\delta}(u_\infty)v \mbox{ a.e. } x\in\Omega \mbox{ as } n\rightarrow +\infty.
\end{equation}
By Fatou's Lemma this implies that
\begin{equation}
\label{66666}
\into F_\infty(x,u_\infty)Z_{\delta}(u_\infty) v\leq \liminf_{n\to +\infty} F_n(x,u_n)Z_{\delta}(u_n)v \,\, \forall \delta>0.
\end{equation}

Since $Z_\delta(s)$ tends to $\chi_{_{\{s=0\}}}$ for every $s\geq 0$ as $\delta$ tends to zero and since $F_\infty(x,u_\infty)v\in L^1(\Omega)$ by \eqref{71bis}, the left-hand side of \eqref{66666} tends to $\dys \into F_\infty(x,u_\infty)\chi_{_{\{u_\infty=0\}}}v$ as $\delta$ tends to zero.

Combining the latest result with \eqref{66666} and \eqref{76} proves \eqref{88}.
\\
\smallskip

\noindent{{{\bf Fourth step}}}. 
 We finally pass to the limit in $n$ for $\delta>0$ fixed in the second term of the right-hand side of \eqref{73}, namely in the term $\dys \into F_n(x,u_n)(1-Z_\delta(u_n))v$.

For that we observe that 
$$
0\leq 1-Z_\delta(s)\leq \chi_{_{\{s\geq \delta\}}} \,\, \forall s\geq 0 \,\, \forall \delta>0,
$$
which combined with the growth condition (\ref{eq0.1} {\it iii}) and the fact that the function $\Gamma$ is increasing gives 
\begin{equation}
\label{61111}
0\leq F_n(x,u_n) (1-Z_\delta(u_n))v\leq \ \frac{h(x)}{\Gamma(u_n)}\chi_{_{\{u_n\geq\delta\}}}v\leq \frac{h(x)}{\Gamma(\delta)}v.
\end{equation}

Since the right-hand side of \eqref{61111} belongs to $L^1(\Omega)$, and since, for every $\delta>0$ fixed, one has, as in \eqref{66665},
$$
F_n(x,u_n)(1-Z_{\delta}(u_n)) v\rightarrow F_{\infty}(x,u_\infty)(1-Z_{\delta}(u_\infty))v \mbox{ a.e. } x\in\Omega \mbox{ as } n\rightarrow +\infty,
$$
Lebesgue's dominated convergence theorem implies that for every $\delta>0$ fixed one has 
\begin{equation}
\label{61112}
\into F_n(x,u_n)(1-Z_{\delta}(u_n)) v\rightarrow \into F_{\infty}(x,u_\infty)(1-Z_{\delta}(u_\infty))v \mbox{ as } n\rightarrow +\infty.
\end{equation}

Since the right-hand side of \eqref{61112} tends to $\dys\into F_{\infty}(x,u_\infty)(1-\chi_{_{\{u_\infty=0\}}})v $ as $\delta$ tends to zero, which is nothing but $\dys\into F_\infty(x,u_\infty)v$ in view of \eqref{88}, we have proved that 
\begin{equation}
\label{61113}
\lim_n \into F_n(x,u_n)(1-Z_\delta(u_n))v \rightarrow \into F_{\infty}(x,u_{\infty})v \,\mbox{ as }\, \delta\to0.
\end{equation}
\\
\smallskip

\noindent{{{\bf Fifth step}}}.
Collecting the results obtained in \eqref{74}, \eqref{76} and \eqref{61113}, we pass to the limit in each term of \eqref{73}, first in $n$ for $\delta>0$ fixed, and then in $\delta$. This proves that for every $v\in \mathcal{V}(\Omega), v\geq0$, one has
\begin{align*}
\begin{cases}
\dys \into\, {}^t\!A(x)Dv DG_k(u_\infty)+\sum_{i\in I} \into \hat{g_i} D(\hat{\varphi_i} T_k(u_\infty))+\into \hat{f}T_k(u_\infty)=\\
\dys =\into F_\infty(x,u_\infty)v,
\end{cases}
\end{align*}
which is nothing but (\ref{sol2} {\it ii})$_\infty$.

We have proved that $u_\infty$ is a solution to problem \eqref{eqprima}$_\infty$ in the sense of Definition~\ref{sol}.

It only remains to prove that the convergences in \eqref{num2weak} are strong (see \eqref{num2}). To this aim it is sufficient to prove the two following strong convergences
\begin{equation}\label {gikappa}
\dys  G_k(u_n)\rightarrow  G_k(u_\infty) \, \mbox{ in } H_0^1(\Omega)\, \mbox{strongly }\,\, \forall k>0,
\end{equation}
\begin{align}\label {fi}
\begin{cases}
\dys \varphi DT_k (u_n)\rightarrow \varphi DT_k (u_{\infty}) \, \mbox{ in } (L^2(\Omega))^N\, \mbox{strongly}\,\, \forall k>0,\,\\
\forall \varphi\in H_0^1(\Omega)\cap L^\infty(\Omega).
\end{cases}
\end{align}
Indeed one has $u_n=T_k(u_n)+G_k(u_n)$, and the strong convergence of $T_k(u_n)$ in $L^2(\Omega)$ then follows from Lebesgue's dominated convergence Theorem and from the almost convergence of $u_n$ (see \eqref{num2weak}). These facts in turn 
imply the strong convergences of $u_n$ in $L^2(\Omega)$, of $\varphi T_k(u_n)$ in $H_0^1(\Omega)$, and of $u_n$ in $H^1_{\mbox{\tiny loc}}(\Omega)$.
\\
\smallskip

\noindent{{{\bf Sixth step}}}. As far as \eqref{gikappa} is concerned, the strong convergence follows from the energy equality $\eqref{condfc3}_n$, namely the fact that 
\begin{equation}\label{54n}
\into A(x) DG_k(u_n) D G_k(u_n)=\into F_n(x,u_n)G_k(u_n) \,\,  \forall k>0.
\end{equation}

It is easy to pass to the limit in the right-hand side of \eqref{54n}. Indeed the inequality
$$
0\leq F_n(x, u_n) G_k(u_n)\leq  \frac{h(x)}{\Gamma(u_n)}\chi_{_{\{u_n\geq k\}}}G_k(u_n)\leq {{h(x)}\over{\Gamma(k)}} G_k(u_n)
$$
and the boundedness of $G_k(u_n)$ in $H^1_0(\Omega)$ (see \eqref{num2weak}) imply that the sequence $F_n(x, u_n) G_k(u_n)$ is equintegrable in $n$; since $$F_n(x, u_n) G_k(u_n)\rightarrow F_\infty(x, u_\infty) G_k(u_\infty)\,\,\mbox{ a.e. } x\in\Omega, $$ using Vitali's Theorem one obtains that for every $k>0$ fixed
\begin{equation}\label{rhs}
\dys \int_\Omega F_n(x, u_n) G_k(u_n) \rightarrow \dys \int_\Omega F_\infty(x, u_\infty) G_k(u_\infty) \mbox{ as } n\to +\infty. 
\end{equation}

On the other hand, the energy equality $\eqref{condfc3}_\infty$ asserts that
\begin{equation}\label{54infty}
\into A(x) DG_k(u_\infty) D G_k(u_\infty)=\into F_\infty(x,u_\infty)G_k(u_\infty) \,\,  \forall k>0.
\end{equation}

Collecting together \eqref{54n}, \eqref{rhs} and \eqref{54infty}, one has  for every $k>0$ fixed
$$
\into A(x) DG_k(u_n) D G_k(u_n)\rightarrow \into A(x) DG_k(u_\infty) D G_k(u_\infty)\,\,  \mbox{as}\,\, n\rightarrow +\infty ,
$$
which, with the weak convergence in $(L^2(\Omega))^N$ of $DG_k(u_n)$ to $DG_k(u_\infty)$ (see \eqref{num2weak}), implies the strong convergence \eqref{gikappa}.
\\
\smallskip

\noindent{{{\bf Seventh step}}}.  As far as  \eqref{fi} is concerned, writing equation  \eqref{num14}  for $F=F_n$ and $u=u_n$, one obtains
\begin{align}
\label{num18}
\begin{cases}
\dys \into  A(x) DT_k(u_n)DT_k(u_n)\, \varphi^2=\\
\dys =-2\into A(x) DT_k(u_n)D\varphi\,\varphi T_k(u_n)-2\into A(x) DG_k(u_n)D\varphi\,\varphi T_k(u_n)+\\
\dys+\into F_n(x,u_n)\varphi^2T_k(u_n)\quad \dys\forall \varphi\in H_0^1(\Omega)\cap L^\infty(\Omega).
\end{cases}
\end{align}

Let us pass to the limit in the right-hand side  of \eqref{num18} as $n$ tends to $+\infty$. In view of \eqref{num2weak} one has
\begin{align}
\label{622new}
\begin{cases}
\dys -2\into A(x) DT_k(u_n)D\varphi\,\varphi T_k(u_n)-2\into A(x) DG_k(u_n)D\varphi\,\varphi T_k(u_n)\rightarrow\\
\dys\rightarrow-2\into A(x) DT_k(u_\infty)D\varphi\,\varphi T_k(u_\infty)-2\into A(x) DG_k(u_\infty)D\varphi\,\varphi T_k(u_\infty)\\
\mbox{as } n\rightarrow +\infty.
\end{cases}
\end{align}

As far as the last integral of the right-hand side of \eqref{num18} is concerned, we write, for every fixed $\delta>0$, 
\begin{align}
\label{622bis}
\begin{cases}
\dys \into F_n(x,u_n)\varphi^2 T_k(u_n)=\\
\dys=\into F_n(x,u_n) Z_\delta(u_n) \varphi^2 T_k(u_n)+\into F_n(x,u_n)(1-Z_\delta(u_n))\varphi^2 T_k(u_n).
\end{cases}
\end{align}

For the first term of the right-hand side of \eqref{622bis}, we use the a priori estimate \eqref{5701bis} with $v=\varphi^2$; indeed $\varphi^2\in \mathcal{V}(\Omega)$ (see \eqref{condv5}) with 
$$
-div\, {}^t\!A(x)D\varphi^2=\hat\varphi (-div\, \hat{g})+\hat{f}\,\, {\rm in }\,\mathcal{D}'(\Omega),
$$
with $\hat\varphi=2\varphi$, $\hat{g}=\, {}^t\!A(x)D\varphi$ and $\hat{f}=-2\, {}^t\!A(x)D\varphi D\varphi$; this yields, since $\hat gD\hat\varphi +\hat f=0$,
\begin{align}
\label{622ter}
\begin{cases}\vspace{0.1cm}
\dys 0\leq\into F_n(x,u_n)Z_\delta(u_n)\varphi^2 T_k(u_n)\leq k\into F_n(x,u_n)Z_\delta(u_n)\varphi^2 \leq\\\vspace{0.1cm}
\dys \leq 2k \into Z_\delta(u_n) \,{}^t\!A(x)D\varphi Du_n\,\varphi=\\
\dys = 2k \into Z_\delta(u_n) \, {}^t\!A(x)D\varphi DT_k(u_n)\,\varphi+2k \into Z_\delta(u_n)\,  {}^t\!A(x)D\varphi DG_k(u_n)\,\varphi.
\end{cases}
\end{align}
We pass to the limit in the right-hand side of \eqref{622ter} first for $\delta>0$ fixed as $n$ tends to $+\infty$ thanks to \eqref{num2weak}, and then as $\delta$ tends to $0$. Since
$
Z_\delta(u_\infty)$ tends to $\chi_{_{\{u_\infty=0\}}}$ in $L^\infty(\Omega)$  weak-star as $\delta$ tends to zero, and since $Du_\infty=0$ a.e. on $\{u_\infty=0\}$,
we obtain that 
\begin{equation}
\label{622quatro}
\limsup_{n} \into F_n(x,u_n)Z_\delta(u_n) \varphi^2 T_k(u_n)\rightarrow 0 \mbox{ as } \delta\rightarrow 0,
\end{equation}

For the second term of the right-hand side of \eqref{622bis}, we repeat the proof that we did above to prove \eqref{61113}, and we obtain that
\begin{equation}
\label{622quinto}
\lim_{n} \into F_n(x,u_n)(1-Z_\delta(u_n))\varphi^2 T_k(u_n)\rightarrow \into F_\infty(x,u_\infty)\varphi^2T_k(u_\infty) \mbox{ as } \delta\rightarrow 0.
\end{equation}

On the other hand, writing equation \eqref{num14} for $F=F_\infty$ and $u=u_\infty$, one has
\begin{align}
\label{622sexto}
\begin{cases}
\dys \into  A(x) DT_k(u_\infty)DT_k(u_\infty)\, \varphi^2=\\
\dys =-2\into A(x) DT_k(u_\infty)D\varphi\,\varphi T_k(u_\infty)-2\into A(x) DG_k(u_\infty)D\varphi\,\varphi T_k(u_\infty)\,+\\
\dys+\into F_\infty(x,u_\infty)\varphi^2T_k(u_\infty)\quad \dys\forall \varphi\in H_0^1(\Omega)\cap L^\infty(\Omega).
\end{cases}
\end{align}

Collecting together \eqref{num18}, \eqref{622new}, \eqref{622bis}, \eqref{622quatro}, \eqref{622quinto} and \eqref{622sexto}, one has for every $k>0$ fixed
\begin{align*}
\begin{cases}
\dys \into  A(x) DT_k(u_n)DT_k(u_n)\, \varphi^2\rightarrow \into  A(x) DT_k(u_\infty)DT_k(u_\infty)\, \varphi^2\\
\mbox{as } n\rightarrow +\infty,
\end{cases}
\end{align*}
which, with the weak convergence in $(L^2(\Omega))^N$ of $\varphi DT_k(u_n)$ to $\varphi DT_k(u_\infty)$ implies the strong convergence \eqref{fi}.

This completes the proof of the Stability Theorem~\ref{est}.
\qed
\\
\smallskip

\noindent{ }  {\bf 6.2. Proof of  the Existence Theorem~\ref{EUS}}
\label{profiste}
$\mbox{  }$

Consider the problem  
 \begin{equation}
 \label{81}
\begin{cases}
u_n\in H_0^1(\Omega),\\
\displaystyle \into A(x) Du_n D\varphi=\into T_n(F(x,u_n^+))\varphi \quad \forall \varphi \in H_0^1(\Omega),
\end{cases} 
\end{equation}
where $T_n$ is the truncation at height $n$.

Since 
$$T_n(F(x,s^+)):(x,s)\in\Omega\times]-\infty,+\infty[\rightarrow T_n(F(x,s^+))\in [0,n]$$ is a Carath\'eodory function which is almost everywhere bounded by $n$, Schauder's fixed point theorem implies that problem \eqref{81} has at least a solution. Moreover, since $T_n(F(x,s^+))\geq 0$, this solution is nonnegative by the weak maximum principle, so that $u_n\geq 0$, and $T_n(F(x,u_n^+))=T_n(F(x,u_n))$.

Define now the function $F_n$ by
$$
F_n(x,s)=T_n(F(x,s)) \mbox{ a.e. } x\in\Omega,\,\, \forall s\in[0,+\infty[.
$$

For every given $n$, the function $F_n$ is bounded, and it is easy to see that every $u_n$ solution to \eqref{81} is a solution of problem \eqref{eqprima}$_n$ in the sense of Definition~\ref{sol}, where \eqref{eqprima}$_n$ is the problem \eqref{eqprima} where the function $F$ has been replaced by $F_n$: indeed $u_n\geq 0$ belongs to $H_0^1(\Omega)$, and therefore satisfies \eqref{sol1} and (\ref{sol2} {\it i}); $u_n$ also satisfies  (\ref{sol2} {\it ii}) in view of 
\begin{equation}\label{640}
\begin{cases}
\langle\langle -div\, {}^t\!A(x)Dv, T_k(u_n) \rangle\rangle_\Omega=\langle -div\, {}^t\!A(x)Dv, T_k(u_n)\rangle_{H^{-1}(\Omega),H^1_0(\Omega) } \\\, \forall v\in\mathcal{V}(\Omega), 
\end{cases}
\end{equation}
which results of \eqref{classic} by taking $y=T_k(u_n)\in H_0^1(\Omega)\cap L^\infty(\Omega)$.

It is clear that the functions $F_n$ satisfy \eqref{car} and \eqref{eq0.1} with the functions $h$ and $\Gamma$ which appear in the definition of the function $F$. Moreover it is not difficult to verify that the function $F_n$ satisfy \eqref{num1} with $F_\infty$ given by
$F_{\infty}(x,s)=F(x,s)$.

The Stability Theorem~\ref{est} then implies that there exists a subsequence  and a function $u_\infty$ which is a solution to problem \eqref{eqprima} in the sense of Definition~\ref{sol} such that the convergences \eqref{num2} hold true.

This proves the Existence Theorem~\ref{EUS}.
\qed
\smallskip

\section{ Comparison Principle\\ and proof of the Uniqueness Theorem~\ref{uniqueness}}
\label{comparison}
In this section we prove  a comparison result which uses assumption \eqref{eq0.2}, namely the fact that $F(x,s)$ is nonincreasing with respect to $s$.  Note that we never use this assumption in  the rest of the present paper except as far as the Uniqueness Theorem~\ref{uniqueness} is concerned.
\begin{proposition}{\bf (Comparison Principle).}
\label{prop0} 
Assume that the matrix $A$ satisfies \eqref{eq0.0}. Let $F_1(x,s)$ and $F_2(x,s)$ be two functions satisfying \eqref{car} and \eqref{eq0.1} (possibly for different functions $h$ and $\Gamma$). Assume moreover that 
\begin{equation}
\label{80}
\mbox{either } F_1(x,s) \mbox{ or } F_2(x,s) \mbox{ is nonincreasing, i.e. satisfies } \eqref{eq0.2},
\end{equation}
 and that  
\begin{equation}
                              \label{condfc}
F_1(x,s)\leq F_2(x,s)\,\,\, \mbox{a.e. } x\in\Omega,\quad \forall s\geq0.
\end{equation}

 Let $u_1$ and $u_2$ be any solutions in  the sense of Definition~\ref{sol} to problems \eqref{eqprima}$_1$ and \eqref{eqprima}$_2$, where \eqref{eqprima}$_1$ and \eqref{eqprima}$_2$ are \eqref{eqprima} with $F(x,s)$ replaced respectively by $F_1(x,s)$ and $F_2(x,s)$. 
 Then one has
\begin{equation}
                               \label{condfc2}
u_1(x)\leq u_2(x) \mbox{ a.e. } x\in\Omega.
\end{equation}
\end{proposition}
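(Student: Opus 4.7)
The plan is to mimic the standard comparison-by-testing argument: subtract the equations for $u_1$ and $u_2$ and test with (an approximation of) $(u_1-u_2)^+$. Formally this produces
$$
\into A(x)\,D(u_1-u_2)^+\!\cdot\!D(u_1-u_2)^+ \;=\; \into\bigl(F_1(x,u_1)-F_2(x,u_2)\bigr)\,(u_1-u_2)^+,
$$
whose left-hand side is at least $\alpha\,\|D(u_1-u_2)^+\|^2_{(L^2(\Omega))^N}$ by \eqref{eq0.0}. I first carry out the sign analysis of the right-hand side by splitting $\{u_1>u_2\}$ into two parts. On $\{u_1>u_2>0\}$, assuming e.g.\ that $F_2$ is nonincreasing, the combination of \eqref{condfc} and \eqref{eq0.2} gives $F_1(x,u_1)\le F_2(x,u_1)\le F_2(x,u_2)$, so the integrand is $\le 0$; the case with $F_1$ nonincreasing is handled through the symmetric chain $F_1(x,u_1)\le F_1(x,u_2)\le F_2(x,u_2)$. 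On $\{u_1>u_2=0\}$, Proposition~\ref{prop69} applied with $v=\phi^2$ for $\phi\in\mathcal D(\Omega)$ arbitrary forces $F_2(x,u_2)=0$ almost everywhere on $\{u_2=0\}$; monotonicity together with \eqref{condfc} then yields $F_2(x,u_1)=0$ and $F_1(x,u_1)=0$ on this set, so the integrand vanishes there. Thus the formal right-hand side is $\le 0$.

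To implement this within Definition~\ref{sol}, $(u_1-u_2)^+$ must be replaced by an element of $\mathcal V(\Omega)$. For $\phi\in\mathcal D(\Omega)$ with $\phi\ge 0$ and $k>0$ I set
$$
v_{k,\phi}\;=\;\phi^2\,\bigl(T_k(u_1)-T_k(u_2)\bigr)^+.
$$
By Remark~\ref{r314} the product $\phi\,DT_k(u_i)$ belongs to $(L^2(\Omega))^N$, so $\phi\,\bigl(T_k(u_1)-T_k(u_2)\bigr)^+$ lies in $H^1_0(\Omega)\cap L^\infty(\Omega)$ with compact support; Remark~\ref{examples}~\emph{i}) then gives $v_{k,\phi}\in\mathcal V(\Omega)$, $v_{k,\phi}\ge 0$. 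Testing (\ref{sol2}~\emph{ii}) for $u_1$ and for $u_2$ with $v_{k,\phi}$ and subtracting, the dominant contribution on the left-hand side is $\int\phi^2 A(x)\,D(u_1-u_2)\cdot D\bigl(T_k(u_1)-T_k(u_2)\bigr)^+$; on $\{u_1>u_2,\ \max(u_1,u_2)<k\}$ this coincides with $\phi^2 A\,D(u_1-u_2)\cdot D(u_1-u_2)\ge \alpha\,\phi^2\,|D(u_1-u_2)|^2$, while the off-diagonal piece supported on $\{\max(u_1,u_2)\ge k\}$ is controlled by Propositions~\ref{prop1}--\ref{prop2} (in particular by $\|DG_k(u_i)\|_{(L^2(\Omega))^N}\to 0$ from \eqref{num3}) and vanishes as $k\to\infty$.

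Passing first to the limit $k\to\infty$ for $\phi$ fixed, and applying the sign analysis of the first paragraph to the right-hand side, one arrives at
$$
\alpha\into\phi^2\,|D(u_1-u_2)^+|^2 \;\le\; -\,2\into\phi\,A(x)\,D(u_1-u_2)\cdot D\phi\,(u_1-u_2)^+.
$$
The main obstacle is the second limit $\phi\nearrow 1$: the $D\phi$-term cannot be absorbed using only $\varphi\,Du_i\in(L^2(\Omega))^N$ (which holds just for $\varphi\in H^1_0(\Omega)\cap L^\infty(\Omega)$), and it is precisely the $\beta$-regularity of Proposition~\ref{lem2} that saves the day: the membership $\beta(u_i)\in H^1_0(\Omega)$ permits the construction of a sequence of cut-offs $\phi_n\nearrow 1$, built from $\beta(u_1)+\beta(u_2)$, for which the relevant boundary contribution tends to zero. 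One thereby obtains $D(u_1-u_2)^+=0$ almost everywhere; since $\bigl(\beta(u_1)-\beta(u_2)\bigr)^+\in H^1_0(\Omega)$ and $\beta$ is strictly increasing by Remark~\ref{primerarem}~\emph{i}), a positive constant value of $(u_1-u_2)^+$ on a connected component of $\Omega$ would contradict the $H^1_0$-trace vanishing of $\bigl(\beta(u_1)-\beta(u_2)\bigr)^+$ at $\partial\Omega$ (via $\beta(u_2+c)-\beta(u_2)$ being bounded below by $\beta(c)>0$ while $\beta(u_2)\to 0$ in trace sense). This forces $(u_1-u_2)^+\equiv 0$, i.e.\ $u_1\le u_2$ almost everywhere in $\Omega$.
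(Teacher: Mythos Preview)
Your overall strategy is right --- subtract, test with an approximation of $(u_1-u_2)^+$, use monotonicity for the sign, and invoke the $\beta$-regularity of Proposition~\ref{lem2} to close up at the boundary --- but the execution has a genuine gap at exactly the step you flag as ``the main obstacle''. You write that Proposition~\ref{lem2} ``permits the construction of a sequence of cut-offs $\phi_n\nearrow 1$, built from $\beta(u_1)+\beta(u_2)$, for which the relevant boundary contribution tends to zero'', but you do not say what these cut-offs are, nor why the term $\int_\Omega \phi_n\,A(x)D(u_1-u_2)\,D\phi_n\,(u_1-u_2)^+$ vanishes in the limit. Any such $\phi_n$ will have $D\phi_n$ involving $Du_1$ and $Du_2$, producing quadratic terms in $Du_i$ that are \emph{not} obviously controlled; this is precisely the difficulty that prevents a naive cut-off argument. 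Moreover, your concluding step invokes the ``$H^1_0$-trace vanishing of $(\beta(u_1)-\beta(u_2))^+$ at $\partial\Omega$'', but no regularity on $\partial\Omega$ is assumed (so there is no trace map), the functions $\beta_1$ and $\beta_2$ attached to $F_1$ and $F_2$ may differ, and the inequality $\beta(u_2+c)-\beta(u_2)\ge\beta(c)$ you invoke is false in general.

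The paper closes this gap not by passing to the limit in a cut-off, but by choosing a test function that already lies in $H_0^1(\Omega)$: it sets $\varphi=B_1(T_k^+(u_1-u_2))$ with $B_1(s)=\int_0^s\beta_1(t)\,dt$, and shows directly that $\varphi\in H_0^1(\Omega)\cap L^\infty(\Omega)$ because $0\le\varphi\le B_1(T_k(u_1))$ and $B_1(T_k(u_1))\in H_0^1(\Omega)$ (a consequence of $\beta_1(u_1)\in H_0^1(\Omega)$ and Lemma~\ref{lem0}). Testing with $\varphi^2\in\mathcal V(\Omega)$ and subtracting yields, after the sign analysis, $\int_\Omega A(x)\,DM_1(T_k^+(u_1-u_2))\,DM_1(T_k^+(u_1-u_2))\le 0$ for a strictly increasing $M_1$, hence $T_k^+(u_1-u_2)$ is constant on each connected component; the constant is then forced to be zero by $0\le\beta_1(T_k^+(u_1-u_2))\le\beta_1(T_k(u_1))\in H_0^1(\Omega)$ and Lemma~\ref{lem0} again --- no trace argument needed. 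The moral is that the $\beta$-regularity is used not to build cut-offs converging to $1$, but to weight the test function so that it is \emph{globally} in $H_0^1(\Omega)$ from the start.
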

\begin{remark}
The proof of the Comparison Principle of Proposition~\ref{prop0} is based on the use of the test function $(B_1(T_k^+(u_1-u_2)))^2$. A similar test function has been used by L.~Boccardo and J.~Casado-D\'iaz in \cite{BC} to prove the uniqueness of solution to problem \eqref{eqprima} obtained by approximation.

Note that the use of this test function is allowed by the regularity property \eqref{510bis1} proved in Proposition~\ref{lem2}.
\qed
\end{remark}

\begin{proof}[{\bf Proof of the Uniqueness Theorem~\ref{uniqueness}}]$\mbox{  }$

Applying the Comparison Principle to the case where $F_1(x,s)=F_2(x,s)=F(x,s)$ with $F(x,s)$ satisfying \eqref{eq0.2} immediately proves the Uniqueness Theorem~\ref{uniqueness}.
\end{proof}

\begin{proof}[{\bf Proof of  Proposition~\ref{prop0}}]$\mbox{  }$

\smallskip

\noindent{{{\bf First step}}}.
Let $k>0$ be fixed. Define $\varphi$ by 
\begin{equation}
\label{899}
\varphi= B_1(T_k^+(u_1-u_2)),  
\end{equation}
where $B_1:s\in[0,+\infty[\rightarrow B_1(s)\in[0,+\infty[$ is the function defined by
$$
B_1(s)=\int_0^s \beta_1(t)dt\quad \forall s\geq 0,
$$
where $\dys\beta_1(s)=\int_0^s \sqrt{\Gamma'_1(t)}dt$ and $\Gamma_1$ is the function for which $F_1$ satisfies \eqref{eq0.1}.

In this step we will prove that
\begin{equation}
\label{8100}
\varphi\in H_0^1(\Omega)\cap L^\infty(\Omega).
\end{equation}

Since $u_1$ and $u_2$ belong to $H^1_{\mbox{\tiny loc}}(\Omega)$, one  has $T_k^+(u_1-u_2)\in H^1_{\mbox{\tiny loc}}(\Omega)\cap L^\infty(\Omega)$; since $\beta_1\in C^0([0,+\infty[)$, the function $\varphi$ belongs to $H^1_{\mbox{\tiny loc}}(\Omega)$ and one has
\begin{equation}
\label{8200}
D\varphi=\beta_1(T_k^+(u_1-u_2))\chi_{_{\{0<u_1-u_2<k\}}}(Du_1-Du_2)\,  \mbox{ in } \, \mathcal{D}'(\Omega).
\end{equation}
Since $0\leq T_k^+(s_1-s_2)\leq T_k(s_1)$ for $s_1\geq 0$ and $s_2\geq 0$ and since $\beta_1$ is nondecreasing, this implies that
$$
|D\varphi|\leq \beta_1(T_k(u_1))(|Du_1|+|Du_2|).
$$
But in view of \eqref{510bis1}, $\beta_1(T_k(u_1))$ belongs to $H_0^1(\Omega)\cap L^\infty(\Omega)$, and then the properties \eqref{3111} for $u_1$ and $u_2$ imply that  $D\varphi\in (L^2(\Omega))^N$, and therefore that $\varphi\in H^1(\Omega)\cap L^\infty(\Omega)$.

Since $B_1$ is nondecreasing one has $0\leq \varphi\leq B_1(T_k(u_1))$.  We now claim that
\begin{equation}
\label{8101-}
B_1(T_k(u_1))\in H_0^1(\Omega),
\end{equation}
which by Lemma~\ref{lem0} of Appendix~\ref{appendixa} below completes the proof of \eqref{8100}.

Let us now prove \eqref{8101-}. As in \eqref{564bis}, we observe that for every $\delta$ with $0<\delta<k$ and for the function $S_{\delta,k}$ defined by \eqref{562bis} one has
\begin{align*}
\begin{cases}
\dys |DB_1(S_{\delta,k}(u_1))|=|\beta_1(S_{\delta,k}(u_1))\chi_{_{\{\delta<u_1<k\}}} Du_1|\leq \sup_{0\leq s\leq k} \beta_1(s)\,|D G_\delta(u_1)|,\\
\dys 0\leq B_1(S_{\delta,k}(u_1))\leq B_1(G_{\delta}(u_1)).
\end{cases}
\end{align*}
Then Lemma~\ref{lem0} implies that $B_1(S_{\delta,k}(u_1))\in H_0^1(\Omega)\cap L^\infty(\Omega)$. But $|DB_1(S_{\delta,k}(u_1))|\leq \beta_1(T_k(u_1))\,|Du_1|$, which belongs to $L^2(\Omega)$ because of \eqref{510bis1} and \eqref{3111}. Therefore $\beta_1(S_{\delta,k}(u_1))$ converges to $\beta_1(T_k(u_1))$ in $H^1(\Omega)$ weakly as $\delta$ tends to zero.

This implies \eqref{8101-}.

\smallskip

\noindent{{{\bf Second step}}}.
Since $\varphi^2\in \mathcal{V}(\Omega)$ in view of \eqref{8100} and of Remark~\ref{examples}~{\it ii}), we can  take $\varphi^2$ as test function in (\ref{sol2} {\it ii})$_1$ and (\ref{sol2} {\it ii})$_2$. Taking the difference of these two equations, we get
\begin{align}
\label{casopo}
\begin{cases}
\dys2\into \varphi \, {}^t\!A(x) D\varphi D(G_k(u_1)-G_k(u_2))\,+\\\vspace{0.1cm}
+ \dys2\into  \, {}^t\!A(x) D\varphi D(\varphi(T_k(u_1)-T_k(u_2)))\,-\\
- \dys2\into  \, {}^t\!A(x) D\varphi D\varphi\,(T_k(u_1)-T_k(u_2))=\\
=\langle -div\, {}^t\!A(x)D\varphi^2, G_k(u_1)-G_k(u_2) \rangle_{H^{-1}(\Omega),H_0^1(\Omega)}\,+\\+\langle\langle -div\, {}^t\!A(x)D\varphi^2, T_k(u_1)-T_k(u_2) \rangle\rangle_\Omega=\\
\dys = \into (F_1(x,u_1)-F_2(x,u_2))\,\varphi^2.
\end{cases}
\end{align}

Expanding in $L^1_{\mbox{\tiny loc}}(\Omega)$ the integrands of the three first lines of \eqref{casopo}, one realizes that their sum is nothing but $2 \varphi \, {}^t\!A(x) D\varphi D(u_1-u_2)$, which belongs to $L^1(\Omega)$ in view of \eqref{3111}. Therefore one has
\begin{equation}
\label{8300}
2\into \varphi \, {}^t\!A(x) D\varphi D(u_1-u_2)= \into (F_1(x,u_1)-F_2(x,u_2))\,\varphi^2,
\end{equation}
which is formally easily obtained by taking $\varphi^2$ as test function in \eqref{eqprima}$_1$ and \eqref{eqprima}$_2$ and making the difference.

\smallskip

\noindent{{{\bf Third step}}}.
Let us now prove that for $\varphi$ given by \eqref{899} one has
\begin{equation}
\label{87}
(F_1(x,u_1)-F_2(x,u_2))\,\varphi^2\leq 0\, \mbox{ a.e. in } x\in\Omega.
\end{equation}

Since $F_1(x,u_1)$ and $F_2(x,u_2)$ belong to $L^1_{\mbox{\tiny loc}}(\Omega)$ (see \eqref{3107}) and are therefore finite almost everywhere, one has 
$$
(F_1(x,u_1)-F_2(x,u_2))\,\varphi^2=0 \mbox{  on the set where } \varphi^2=0,
$$
since there are no indeterminacies of the types $(\infty-\infty)$ and $\infty\times 0$ in the latest formula.

On the set where $\varphi^2>0$, one has $u_1>u_2$. If $F_1(x,s)$ is nonincreasing with respect to $s$, one has, using first this nonincreasing character and then assumption \eqref{condfc}, 
\begin{align*}
\begin{cases}
\dys F_1(x,u_1)-F_2(x,u_2)\leq F_1(x,u_2)-F_2(x,u_2)\leq 0\\
\dys \mbox{ on the set where } \varphi^2>0,
\end{cases}
\end{align*}
since no indeterminacy of the type $(\infty-\infty)$ appears.

This implies that 
$$
(F_1(x,u_1)-F_2(x,u_2))\varphi^2\leq 0 \mbox{ on the set where } \varphi^2>0.
$$

The case where $F_2(x,s)$ is nonincreasing with respect to $s$ is similar, using first assumption \eqref{condfc} and then this nonincreasing character.

We have proved \eqref{87}.

\smallskip

\noindent{{{\bf Fourh step}}}. Collecting together \eqref{8300} and \eqref{87}  gives 
$$
2\into B_1(T_k^+(u_1-u_2)) \beta_1(T_k^+(u_1-u_2) ) \, {}^t\!A(x) DT_k^+(u_1-u_2) D(u_1-u_2)\leq 0.
$$

Defining the function $M_1$ by 
$$
M_1(s)=\int_0^s \sqrt{B_1(t)\beta_1(t)}dt \quad \forall s>0,
$$
this implies in view of the coercivity \eqref{eq0.0} of the matrix $A$ that $D M_1(T_k^+(u_1-u_2))=0$ on $\Omega$. Therefore  $M_1(T_k^+(u_1-u_2))$ is a constant in each connected component $\omega$ of $\Omega$, and since the function $M_1$ is strictly increasing, there exists a nonnegative constant $C_\omega$ such that  $T_k^+(u_1-u_2)=C_\omega$ in~$\omega$, and therefore  $\beta_1(T_k^+(u_1-u_2))=\beta_1(C_\omega)$ in $\omega$. Since  $T_k^+(u_1-u_2)\leq T_k(u_1)$ in $\Omega$ and since the function $\beta_1$ is nondecreasing, one has 
$$
0\leq \beta_1(C_\omega)=\beta_1 (T_k^+(u_1-u_2))\leq \beta_1(T_k(u_1))\mbox{ in } \omega.
$$
By the regularity property \eqref{510bis1}, the function $\beta_1(T_k(u_1))$ belongs to $H_0^1(\Omega)$ and therefore to $H_0^1(\omega)$ for every connected component $\omega$ of~$\Omega$. By Lemma~\ref{lem0} this implies that $\beta_1(C_\omega)\in H_0^1(\Omega)$, and then that $C_\omega=0$. This proves that $T_k^+(u_1-u_2)=0$ in $\Omega$, which implies  \eqref{condfc2} since $k > 0$.
\end{proof}
\smallskip

\appendix
\setcounter{equation}{0}

\section{An useful lemma}
\label{appendixa}

In this Appendix we state and prove the following lemma which is used many times in the present paper.


\begin{lemma}
\label{lem0}
If $y\in H^1(\Omega)$ and if there exists $\underline{y}$ and $\overline{y}\in H_0^1(\Omega)$ such that
$$\underline{y}\leq y\leq \overline{y}\quad \mbox{a.e. in }\Omega,$$    then $y\in H^1_0(\Omega)$.
\end{lemma}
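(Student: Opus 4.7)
The plan is to approximate $y$ in $H^1(\Omega)$ by functions with compact support in $\Omega$, invoking the standard fact (proved by extension by zero and mollification) that any element of $H^1(\Omega)$ with compact support in $\Omega$ belongs to $H_0^1(\Omega)$.

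First I would reduce to the case $y \geq 0$. Writing $y = y^+ - y^-$, the inequalities $\underline{y} \leq y \leq \overline{y}$ yield $0 \leq y^+ \leq \overline{y}^+$ and $0 \leq y^- \leq \underline{y}^-$. Since $H_0^1(\Omega)$ is a lattice (the positive part of an $H_0^1$ function lies in $H_0^1$, obtained by approximating in $\mathcal{D}(\Omega)$, taking positive parts, and noting that each approximant has compact support in $\Omega$), both $\overline{y}^+$ and $\underline{y}^-$ belong to $H_0^1(\Omega)$. Thus it suffices to treat the case where $0 \leq y \leq w$ with $w \in H_0^1(\Omega)$, $w \geq 0$.

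Next, I would take $\phi_n \in \mathcal{D}(\Omega)$ with $\phi_n \to w$ in $H^1(\Omega)$ and set $w_n = \phi_n^+$. Then $w_n \to w^+ = w$ in $H^1(\Omega)$, and (up to a subsequence) a.e.\ in $\Omega$; moreover each $w_n$ has compact support in $\Omega$. Define
\[
y_n = \min(y, w_n).
\]
Since $y \geq 0$, $y_n$ vanishes outside $\operatorname{supp} w_n$, which is compact in $\Omega$; as $y_n \in H^1(\Omega)$ as the minimum of two $H^1$ functions, we obtain $y_n \in H_0^1(\Omega)$.

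The main obstacle is showing $y_n \to y$ in $H^1(\Omega)$. For the $L^2$ convergence one writes, using $y \leq w$, $|y - y_n| = (y - w_n)^+ \leq (w - w_n)^+ \leq |w - w_n|$, which tends to $0$ in $L^2$. For the gradients, $D(y - y_n) = \chi_{\{y > w_n\}}(Dy - Dw_n)$, which I would split as
\[
\chi_{\{y > w_n\}}(Dy - Dw) + \chi_{\{y > w_n\}}(Dw - Dw_n).
\]
The second piece tends to $0$ in $L^2$ since $|\chi_{\{y > w_n\}}| \leq 1$ and $Dw_n \to Dw$ in $L^2$. For the first piece, on $\{y = w\}$ one has $Dy = Dw$ a.e.\ (Stampacchia's theorem applied to $y - w \in H^1(\Omega)$), while on $\{y < w\}$ we have $\chi_{\{y > w_n\}} \to 0$ a.e.\ (since $w_n \to w > y$ a.e.\ along the subsequence); dominated convergence with dominant $|Dy - Dw|^2 \in L^1(\Omega)$ then gives convergence to zero in $L^2$. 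Thus $y_n \to y$ in $H^1(\Omega)$, and since $H_0^1(\Omega)$ is closed in $H^1(\Omega)$, the limit $y$ lies in $H_0^1(\Omega)$.
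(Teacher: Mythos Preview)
Your proof is correct and is essentially the same as the paper's: your approximant $y_n=\min(y,\phi_n^+)$ coincides with the paper's $y_n=\phi_n^+-(\phi_n^+-y)^+$, and both arguments rest on the fact that this function has compact support in $\Omega$ and converges to $y$ in $H^1(\Omega)$. The only cosmetic differences are that the paper reduces to $\underline{y}=0$ by subtracting $\underline{y}$ rather than by splitting $y=y^+-y^-$, and that it invokes the $H^1$-continuity of the positive part directly instead of your explicit dominated-convergence computation for the gradients.
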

\begin{remark}
Lemma~\ref{lem0} is straightforward when $\partial\Omega$ is sufficiently smooth so that the traces of the functions of $H^1(\Omega)$ are defined. Note that we did not assume any smoothness of $\partial\Omega$.
\qed
\end{remark}
\begin{proof}[{\bf Proof of Lemma~\ref{lem0}}]
$\mbox{}$

Since $0\leq y-\underline{y}\leq \overline{y}-\underline{y}$, it is sufficient to consider the case where $\underline{y}=0$, namely the case where 
$$
y\in H^1(\Omega)\, \mbox{ with } 0\leq y\leq\overline{y}, \mbox{ where } \overline{y}\in H_0^1(\Omega).
$$ 

Since $\overline{y}\in H_0^1(\Omega)$, there exists a sequence  $\phi_n\in \mathcal{D}(\Omega)$, such that $$\phi_n\rightarrow \overline{y}\quad \mbox{in } H_0^1(\Omega).$$ 


The function $y_n$ defined by
$$
y_n=\phi_n^+-(\phi_n^+-y)^+$$belongs to $H^1(\Omega)$ and has compact support (included in the support of $\phi_n$). Therefore 
$$
y_n\in H_0^1(\Omega),
$$
and 
$$
 y_n\rightarrow \overline{y}^+-(\overline{y}^+-y)^+= y\quad \mbox{in }  H^1(\Omega)\quad  \mbox{as } n\rightarrow +\infty.
$$
This implies that $y\in H_0^1(\Omega)$.
\end{proof}

\smallskip

\bigskip

 \noindent \textbf{Acknowledgments}.\label{acknowledgments}   The authors would like to thank Gianni Dal~Maso and Luc Tartar for their friendly help, and to thank Lucio Boccardo, Juan Casado-D\'iaz and Luigi Orsina for having introduced them  to the topics of the present work. They also would like to thank their own institutions (Dipartimento di Scienze di Base e Applicate per l'Ingegneria, Facolt\`a di Ingegneria Civile e Industriale, Sapienza Universit\`a di Roma, Departamento de Matem\'atica Aplicada y Estad\'istica, Universidad Polit\'ecnica de Cartagena, and Laboratoire Jacques-Louis Lions, Universit\'e Pierre et Marie Curie Paris VI et CNRS) for providing the support of reciprocal visits which allowed them to perform the present work. The work of Pedro J. Mart\'inez-Aparicio has been partially supported by the grant MTM2015-68210-P of the Spanish Ministerio de Econom\'ia y Competitividad (MINECO-FEDER), the FQM-116 grant of the Junta de Andaluc\'ia and the grant Programa de Apoyo a la Investigaci\'on de la Fundaci\'on S\'eneca-Agencia de Ciencia y Tecnolog\'ia de la Regi\'on de Murcia 19461/PI/14.
\smallskip

\end{document}